\def\sqr#1#2{{\vcenter{\hrule height.#2pt
        \hbox{\vrule width.#2pt height#1pt \kern#1pt
                \vrule width.#2pt}
        \hrule height.#2pt}}}
\def\square{\mathchoice\sqr64\sqr64\sqr{4}3\sqr{3}3}
\def\QED{\hfill$\square$}
\numberwithin{equation}{section}
\newtheorem{Theorem}{Theorem}[section]
\newtheorem{Lemma}[Theorem]{Lemma}
\newtheorem{Corollary}[Theorem]{Corollary}
\newtheorem{Proposition}[Theorem]{Proposition}
\newtheorem{Setting}[Theorem]{Setting}
\newtheorem{Open Problem}[Theorem]{Open Problem}
\newtheorem{Notation}[Theorem]{Notation}
\newtheorem{Remark}[Theorem]{Remark}
\newtheorem{Example}[Theorem]{Example}
\newtheorem{definition}[Theorem]{Definition}
\newtheorem{Question}[Theorem]{Question}
\newtheorem{Questions}[Theorem]{Questions}
\def\m{{\mathfrak m}}
\def\a{{\mathfrak a}}
\newcommand{\ol}[1]{\overline{#1}}
\newcommand{\ul}[1]{\underline{#1}}
\def\ZZ{{\mathbb Z}}
\def\NN{{\mathbb N}}
\newcommand{\F}{\mathcal{F}}
\newcommand{\G}{\mathbf{G}}
\newcommand{\PP}{\mathbb{P}}
\newcommand{\HH}{\mathbb{H}}
\newcommand{\Po}{\mathcal{P}}
\newcommand{\Pc}{\mathcal{P}_{\leq c}(m)}
\newcommand{\Gc}{\mathbf{G}_{c,(m)}}
\def\Llra{\Longleftrightarrow}
\def\Lra{\Longrightarrow}
\def\lra{\longrightarrow}
\def\Lla{\Longleftarrow}
\newcommand{\be}{\begin{equation*}}
\newcommand{\ee}{\end{equation*}}
\newcommand{\bee}{\begin{equation}}
\newcommand{\eee}{\end{equation}}
\def\h{{\rm ht}}
\def\grade{{\rm grade}}
\def\Ass{{\rm Ass}}
\def\sdef{{\rm sdefect}}
\def\supp{{\rm supp}}
\def\NS{{\rm Nsupp}}
\def\set{{\rm set}}
\def\revlex{{\rm revlex}}
\def\alex{{\rm alex}}
\def\Sdeg{{\rm Sdeg}}
\definecolor{lighterorange}{cmyk}{0,0.42,0.66,0.0}
\begin{document}

\date{today}

\title[The structure and free resolutions of symbolic powers of star configurations]{The structure and free resolutions of the symbolic powers of star configuration of hypersurfaces} 
\author[Paolo Mantero]{Paolo Mantero}

\address{University of Arkansas, Department of Mathematical Sciences, Fayetteville,
AR 72701}
\email{pmantero@uark.edu}
\thanks{}

\maketitle

\begin{abstract}
Star configurations of hypersurfaces are schemes in $\mathbb P^n$ widely generalizing star configurations of points. Their rich structure allows them to be studied using tools from algebraic geometry, combinatorics, commutative algebra and representation theory. 
In particular, there has been much interest in understanding how ``fattening" these schemes affects the algebraic properties of these configurations  or, in other words, understanding the symbolic powers $I^{(m)}$ of their defining ideals $I$.

In the present paper (1) we prove a structure theorem for $I^{(m)}$, giving an explicit description of a minimal generating set of $I^{(m)}$ (overall, and in each degree) which also yields a minimal generating set of the module $I^{(m)}/I^m$ -- which measures how far is $I^{(m)}$ from $I^m$. These results are new even for monomial star configurations or star configurations of points; (2)  
we introduce a notion of ideals with c.i. quotients, generalizing ideals with linear quotients, and show that $I^{(m)}$ have c.i. quotients. As a corollary we obtain that symbolic powers of ideals of star configurations of points  have linear quotients; (3) we find a general formula for all graded Betti numbers of $I^{(m)}$; (4) we prove that a little bit more than the bottom half of the Betti table of $I^{(m)}$  has a regular, almost hypnotic, pattern, and provide a simple closed formula for all these graded Betti numbers and the last irregular strand in the Betti table. 

Other applications include improving and widely extending results by Galetto, Geramita, Shin and Van Tuyl, and providing explicit new general formulas for the minimal number of generators and the symbolic defects of star configurations. 

Inspired by Young tableaux, we introduce a ``canonical" way of writing any monomial in any given set of polynomials, which may be of independent interest. We prove its existence and uniqueness under fairly general assumptions. Along the way, we exploit a connection between the minimal generators $\G_{(m)}$ of $I^{(m)}$ and positive solutions to Diophantine equations, and a  connection between $\G_{(m)}$ and partitions of $m$ via the canonical form of monomials. Our methods are characteristic--free. 
\end{abstract}

\maketitle


\bibliographystyle{amsplain}




\section{Introduction}
Star configuration of points in $\mathbb P_k^n$  have the {\em generic} Hilbert function, i.e. there exists a non-empty Zariski open subset of sets of points in $\mathbb P_k^n$ having their same Hilbert function; however, among all configurations with the generic Hilbert function, star configurations have a remarkable  tendency toward extremal numerical behaviors.

For this reason, and their rich combinatorial structure, in recent years star configurations of points have attracted a strong interest. As a few examples of their applications, they are frequently employed to prove the sharpness of bounds of numerical invariants of sets of points (e.g. \cite[Section~2.4]{BH}, \cite[Ex. 4.3]{MSS}), they play an important role in the proof of Chudnovsky's conjecture for any number of very general points in $\mathbb P_k^n$ (e.g. \cite[Thm.~2.8]{FMX}), and in the decomposition of (generic) hypersurfaces as sums of products of a fixed set of hypersurfaces (e.g. \cite{CGV}). 
See also \cite{GMS}, \cite{HHT}, \cite{BD+}, \cite{BH}, \cite{AS}, \cite{HH}, \cite{GHM}, \cite{CGV}, \cite{LM}, \cite{DSST}, \cite{PS}, \cite{BS}, \cite{BCK}, \cite{GHMN}, \cite{MSS}, \cite{CCGV}, \cite{TX} for a subset of the papers published in the last 15 years proving results or raising questions regarding star configurations of points.

In this paper we are interested in a far-reaching generalization,  introduced in progressively higher generality in the papers \cite{AS}, \cite{PS} and \cite{GHM}, dubbed {\em star configuration of hypersurfaces}. It allows the schemes to have any codimension (not just $n$), and be defined by any fixed set of forms, of any degrees (not necessarily linear),  as long as some reasonable ``intersection property" is met. Essentially, fix a set of $s$ hypersurfaces in $\mathbb P_k^n$, the {\em star configuration} of these hypersurfaces is the union $Z$ of all schemes obtained by intersecting $c$ of these hypersurfaces (see also Definition \ref{2.star}). 

Our objective is to provide a complete description of the structure and Betti table of the ``fattening" $mZ$ of the scheme $Z$. More precisely, we provide a structure theorem for the symbolic powers $I_Z^{(m)}$ and a formula for their graded Betti numbers.\\
\\
{\bf General motivating questions.} When $k$ is algebraically closed, for any equidimensional scheme  $X\subseteq\mathbb P_k^n$, a celebrated result by Zariski and Nagata identifies the symbolic power $I_X^{(m)}$ of the defining ideal $I_X$ as the ideal of all hypersurfaces in $\mathbb P_k^n$ vanishing at $X$ with order at least $m$. Thus, the study of the symbolic powers $I_X^{(m)}$ arises in a very natural way. 
 
On the other hand, determining the numerical characters or even the defining equations of symbolic powers of ideals are very delicate and challenging problems, see for instance the discussion after Questions \ref{1.Q}. In fact, the following natural questions regarding the symbolic powers of an ideal in a polynomial ring are very challenging in general:

\begin{Questions}\label{1.Q}
Fix an integer $m\geq 2$. Fix an ideal $I$ in a polynomial ring over a field  for which a minimal generating set and Betti table are known. What are
\begin{enumerate}
\item the minimal number of generators of the symbolic power $I^{(m)}$?
\item the minimal number of generators of $I^{(m)}$ not lying in the ordinary power $I^m$?\\
(this is a first estimate of the size of the symbolic power, called the {\em $m$-th symbolic defect} of $I$, see \cite{GGSV})
\item a minimal generating set of $I^{(m)}$?
\item the minimal degree of an element in $I^{(m)}$?
\item the Castelnuovo-Mumford regularity of $I^{(m)}$?
\item the Hilbert function of $I^{(m)}$?
\item the Betti table of $I^{(m)}$?
\end{enumerate}
\end{Questions}

Even when $I=I_X$ is the defining ideal of a set of points in $\mathbb P_k^n$ these questions may be extremely challenging. For instance, already the special case of Question \ref{1.Q}(6) where $I=I_X $ is a set of points, even general points, is a very important open problem in Algebraic Geometry, sometimes referred to as {\em the  interpolation problem}:
\begin{Open Problem}\label{IP}[Interpolation Problem]
Let $X$ be a set of general points in $\mathbb P_k^n$. For any integers $m\geq 2$ and $d\geq 2$, what is the number of  linearly independent equations of degree $d$ passing through $X$ with multiplicity $m$?
\end{Open Problem}

A large number of papers in the literature are devoted to Open Problem \ref{IP}.  However, so far, the best general result in this direction is a celebrated interpolation theorem of Alexander and Hirschowitz which in particular solves the case $m=2$, i.e. the case of double points. The 100-page long original proof of this result was obtained in a series of 4 papers \cite{AH1}, \cite{AH2}, \cite{AH3}, \cite{AH4}; despite intensive effort over the last 20 years, which allowed considerable simplifications of Alexander--Hirschowitz Interpolation Theorem, the Interpolation Problem is still wide--open for any $m\geq 3$.

In fact, even the apparently simpler problem stated in Question \ref{1.Q}(4) is still wide-open; it only asks for the smallest possible degree $d$ of a hypersurface passing through a finite set of points in $\mathbb P^n$. Nevertheless, a solution to this problem appears currently out of reach, even for {\em general points in the plane} (i.e. when $n=2$); in fact, a celebrated conjecture by Nagata predicting {\em a lower bound}  for $d$ in $\mathbb P^2$ is still wide-open, despite strong efforts made in the last 50 years (see for instance  \cite{CM}, \cite{CHMR}, \cite{DHST} ).

Answers to special cases of Questions \ref{1.Q}  are known for star configurations, especially for the cases of codimension 2, or symbolic squares; these results are proved in several papers in the literature by a number of authors, including Bocci, Chiantini, Cooper, Fatabbi, Galetto, Geramita, Guardo, Harbourne, Lampa-Baczy\'nska, Lorenzini, Malara, Migliore, Nagel, Park, Seceleanu, Shin, Szpond and Van Tuyl, see also Section 2.

\subsection{Our results}  
In this paper we answer {\em all} of the questions of Questions \ref{1.Q} for all star configurations of hypersurfaces. 
In particular, for any symbolic power $I^{(m)}$ where $I$ is the ideal of a star configuration of hypersurfaces we 
\begin{enumerate}
\item provide a structure theorem for $I^{(m)}$, exhibiting a minimal generating set of $I^{(m)}$ and determining the minimal number of generators of $I^{(m)}$, see Theorem \ref{4.Symb} and Corollary \ref{4.sdef};
\item provide a structure theorem for $I^{(m)}/I^m$ and determine its minimal number of generators, which is a first measure of how far are $I^{(m)}$ and $I^m$, see Theorem \ref{4.Symb3} and Corollary \ref{4.sdef}; our formulas are new even whem $m\leq 4$ or $c\leq 3$;
\item if the hypersurfaces have the same degree, we determine the degrees of all minimal generators of $I^{(m)}$ and provide an explicit combinatorial formula to determine the number of generators in each degree, see Theorem \ref{4.Symb2}(2). For most degrees, we also determine a closed formula for the minimal number of generators in each degree, see Theorem \ref{7.binom};

\item (a) introduce  {\em ideals with c.i. quotients}, which generalize ideals with linear quotients, see Definition \ref{7.ci};\\
(b)  prove that $I^{(m)}$ has c.i. quotients, see  Theorem \ref{6.delta}. In particular, when all forms are linear (e.g. star configuration of points), this implies that $I^{(m)}$ has linear quotients (Corollary \ref{7.linear});
\item prove that if all the hypersurfaces have the same degree $\delta$ then the Betti table of $I^{(m)}$ has a special structure, which we dub a {\em Koszul stranded Betti table}, see Corollary \ref{7.stranded}; 
\item give an explicit formula for all graded Betti numbers of $I^{(m)}$ (Theorem \ref{7.formula}). To facilitate the computations, we also provide {\em closed formulas} solely in term of the power $m$, the codimension $c$ and number of forms $s$, for more than half of the graded Betti numbers of $I^{(m)}$, explicitly determining the entire bottom portion of the Betti table (Theorem \ref{7.strands}), and some closed formulas for the top part of the Betti table (Proposition \ref{7.top}). 
These two results combined already provide a closed formula for the Betti table of $I^{(m)}$ for $m\leq 4$ and codimension at most 11, see Corollary \ref{7.small}. Our methods also illustrate why a simple closed formula for the entire Betti table solely in term of $m,c,s$ is practically impossible to obtain.
 \end{enumerate}
 
 In particular, our results virtually answer any question about $mZ$ and $I_Z^{(m)}$, when $Z$ is a star configuration of hypersurfaces.
\\
\\
Our proofs exploit two fruitful connections between the minimal generators of $I^{(m)}$ and two classical mathematical objects. The first one, is the set of positive solutions to Diophantine equations; this connection is fundamental for the computation of the minimal number of generators of $I^{(m)}$ and $I^{(m)}/I^m$. The second one is the set of all partitions of $m$; we employ this connection to prove that $I^{(m)}$ has c.i. quotients and determine precisely these quotients. While connections with partitions are not unexpected in this setting, the way we establish the connection is unusual as it relies heavily on the canonical form of a monomial (see below).

Additionally, along the way, we introduce two technical tools which may be of independent interest. The first one is a {\em canonical form} of a monomial in a given set of forms (see Definition \ref{3.monsupp}), which is inspired by Young tableaux and has an interpretation in terms of standard monomials in a polynomial ring having an ASL structure induced by its squarefree monomials. This canonical form allows us to describe the minimal generators of star configurations and quickly compute the smallest symbolic power of a star configuration containing a given monomial (Theorem \ref{4.Sdeg}). It is also used to define the connection with partitions (Corollary \ref{6.surj}). Our definition applies to all polynomials that can be written as monomials in a prescribed set of forms (no further assumption) and in general appears to be useful in the computation of symbolic powers. We prove its uniqueness under mild assumptions (Theorem \ref{3.NF}). 

The second one is a technical ingredient which we call the {\em index of overlap} of a partition $[\ul{d}]:=[d_1,\ldots,d_t]\vdash m$ with respect to a given order (Definition \ref{6.i0}). It identifies the smallest index $i_0$ in $[\ul{d}]$ guaranteeing the non-existence of strictly larger partitions having the same first $i_0$ entries equal to the ones of $[\ul{d}]$. 
We use it to show that $I^{(m)}$ has c.i. quotients and explicitly compute the involved colon ideals, see Theorem \ref{6.delta}.

\subsection{Working with star configurations of hypersurfaces instead of monomials} By work of Geramita, Harbourne, Migliore and Nagel, one can specialize {\em monomial star configurations} (i.e. star configuration on the variables) to star configurations of hypersurfaces in such a way that several numerical statements regarding symbolic powers of monomial star configurations also hold for symbolic powers of star configurations of hypersurfaces (see \cite[Thm~3.6]{GHMN}).

However, the situation for ideal--theoretic statements (e.g. inclusion or equality of ideals, properties of colon ideals, etc.) is more complicated; in fact, 
in the more general setting where star configurations of hypersurfaces are defined, several familiar properties of monomial ideals are lost, see Remark \ref{3.prob}. For instance, a ``monomial" in a set of forms may belong to a ``monomial ideal" without being multiple of any generator. 

Therefore, several statements for star configuration of hypersurfaces usually require different arguments and additional care than the corresponding results regarding monomial star configurations. See for instance the proofs of Theorem \ref{4.Sdeg}, or Theorem \ref{4.Symb3}, or \cite[Question~4.7]{GHMN}.\\
\\
The structure of the paper is the following: in Section 2 we recall definitions and results regarding symbolic powers of star configurations; in Section 3 we introduce the notion of {\rm normal form} of a monomial in a set of forms. It always exists, and we provide general and easy-to-check sufficient conditions ensuring its uniqueness. 

In Section 4 we prove the first two main results, namely the structure Theorems \ref{4.Symb} and \ref{4.Symb3}. In Corollary \ref{4.sdef} we compute the minimal number of generators of $I_{c,\F}^{(m)}$ and $I_{c,\F}^{(m)}/I_{c,\F}^m$. As special examples we compute these numbers when $c\leq 3$ or $m\leq 4$ (see Subsection 4.2).
If additionally the forms in $\F$ have the same degree, we determine the number of generators of $I_{c,\F}^{(m)}$ in each degree, see Theorem \ref{4.Symb2}. 

In Section 5 we introduce and study the index of overlap of a partition, and define a total order on a minimal generating set of $I_{c,\F}^{(m)}$. In Section 6, we build upon the previous sections to prove our third main result, namely that $I_{c,\F}^{(m)}$ has c.i. quotients (Theorem \ref{6.delta}). In the special case of linear star configurations, this means that their symbolic powers have linear quotients (Corollary \ref{7.linear}). 
In Section 7 we prove our fourth main result, which is a formula for every graded Betti number of  $R/I_{c,\F}^{(m)}$ (Theorem \ref{7.formula}). The last main result is Theorem \ref{7.strands}, where we provide closed formulas, solely in terms of $s,c$ and $|\F|$, for most of the Betti table of $R/I_{c,\F}^{(m)}$. We also give closed formulas in terms of $s,c$ and $|\F|$ for the top strand of the Betti table under some restrictions and explain why, despite the explicit formula of Theorem \ref{7.formula}, a general closed formula, solely in terms of $s,c$ and $|\F|$, for the remaining cases and the other strands is probably out of reach.

At the time that this paper was being concluded, a preprint was posted on arXiv by J. Biermann, H. De Alba, F. Galetto, S. Murai, U. Nagel, A. O'Keefe, T. R\"omer and A. Seceleanu  \cite{BDA+}. 
Independently from us, they prove result (4)(b) of Section 1.1 in the {\em monomial case}, i.e. when all forms have degree $\delta=1$ and are variables, see \cite[Thm~3.2~and~4.3]{BDA+}, (5) (see \cite[Corollary~4.4(1)]{BDA+}) and a weaker version of (6) (see \cite[Corollary~4.4]{BDA+} and Theorems \ref{7.strands} and \ref{7.binom} and Proposition \ref{7.top}). \\
\\
{\bf Acknowledgments:} the author would like to thank L. Sega for helpful conversations regarding ideals with linearly stranded Betti tables, and Ben Blum--Smith for pointing out the ASL interpretation of the normal form of a monomial. The author would also like to thank the anonymous referee for a patient and careful revision of this manuscript and a number of helpful suggestions.

\section{Star configurations and symbolic powers}
Let $R$ be a polynomial ring over a field of any characteristic, and $I$ a homogeneous ideal of $R$. 
For every $m\in \ZZ_+$ one may define the {\em $m$-th symbolic power of $I$} as the homogeneous ideal 
$$
I^{(m)} = \bigcap_{P\in \Ass(R/I)} I^mR_P\cap R.
$$
From the definition, for every $m,t\in \ZZ_+$ one has $I^{(m)}I^{(t)}\subseteq I^{(m+t)}$ and $I^m \subseteq I^{(m)}$; in general, however, ordinary and symbolic powers are different. As a first measure of how far is a symbolic power from being equal to the corresponding ordinary power, Galetto, Geramita, Shin and Van Tuyl in \cite{GGSV} introduce the following quantity, which they dub {\em the $m$-th symbolic defect} of $I$
$$
\sdef(I,m):=\mu(I^{(m)}/I^m),
$$
where $\mu(U)$ is the minimal number of generators of a finite graded $R$-module $U$.

\begin{Example}\label{2.ex0}
Let $R=k[x_0,x_1,x_2]$ and let $I=(x_0,x_1)\cap (x_0,x_2)\cap (x_1,x_2)=(x_0x_1,x_0x_2,x_1x_2)$ be the defining ideal of the 3 coordinate points in $\mathbb P^2$. Then 
$I^{(2)}=(xyz)+I^2$ and, since $xyz\notin I^2$ (by degree reasons), one has $\sdef(I,2)=1$. 
\end{Example}

We recall now the various notions of star configurations. 

\begin{definition}\label{2.star}
Let $k$ be a field, let $R=k[x_0,\ldots,x_n]$ be a polynomial ring over $k$ and $\F=\{F_1,\ldots,F_s\}$ be a set of forms in $R$. 
Fix $c\leq \min\{s,n\}$ and assume  that any subset of $c+1$ distinct elements of $\F$ forms a complete intersection. Then the ideal
$$
I_{c,\F}=\bigcap_{\{i_1,\ldots,i_c\}\subseteq F} (F_{i_1},\ldots,F_{i_c})
$$
is called {\em the ideal of the star configuration of codimension (or height) $c$ on $\F$}. We will often refer to $I_{c,\F}$ as {\em the star configuration of height $c$ on $\F$}. Any ideal of the form $I_{c,\F}$ is called a {\em star configuration of hypersurfaces in $\mathbb P_k^n$}. 

Additionally, $I_{c,\F}$ is called 
\begin{itemize}
\item a {\em linear star configuration} if all the forms in $\F$ have degree 1;
\item a {\em star configuration of points} if it is a linear star configuration and $c=n$;
\item a {\em monomial star configuration} if $\F=\{x_0,\ldots,x_n\}$.
\end{itemize}
\end{definition}

Star configurations of points also appear in the literature under the name ``$l$-laterals" (e.g. \cite{D}). 

\begin{Remark}
Monomial star configurations are the ideals generated by all squarefree monomials of a fixed degree, see for instance Example \ref{2.ex}(a). For instance, the ideal $I$ of Example \ref{2.ex0} is the monomial star configuration of height 2 in $R=k[x_0,x_1,x_2]$.
\end{Remark}

Next, we collect in a single statement some properties of star configurations; they are proved in higher generality in \cite[Thms~3.3 and 3.6]{GHMN} and \cite[Ex.~3.4]{GHMN}. 

\begin{Proposition}\label{2.ghmn}
Let $I_{c,\F}$ be a star configuration of hypersurfaces. Then 
\begin{enumerate}
\item $I_{c,\F}$ is minimally generated by $\{F_{i_1}\cdots F_{i_{(s+1-c)}}\,\mid\,1\leq i_1<i_2<\cdots <i_{(s+1-c)}\leq s\}$;
\item For every associated prime $P\in \Ass(R/I_{c,\F})$ there exists precisely one subset $\{F_{i_1},\ldots,F_{i_c}\}\subseteq \F$ such that $(F_{i_1},\ldots,F_{i_c}) \subseteq P$;
\item For every $m\geq 1$, the ideal $I_{c,\F}^{(m)}:=(I_{c,\F})^{(m)}$ is Cohen-Macaulay of codimension $c$;
\item For every $m\geq 1$ one has $I_{c,\F}^{(m)} = \bigcap_{1\leq j_1<\cdots < j_c\leq s} (F_{j_1},\ldots,F_{j_c})^{m}$.
\end{enumerate}
\end{Proposition}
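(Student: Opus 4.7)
The plan is to prove the four parts in order, exploiting the fact that the complete-intersection hypothesis on any $c+1$ distinct forms of $\F$ controls every step. I would begin with parts (1) and (2), which are essentially combinatorial, and then use them to drive parts (3) and (4).

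For (1), first verify that each product $F_{i_1}\cdots F_{i_{s+1-c}}$ lies in every ideal $(F_{j_1},\ldots,F_{j_c})$: the index set $\{i_1,\ldots,i_{s+1-c}\}$ and the complement $\{1,\ldots,s\}\setminus\{j_1,\ldots,j_c\}$ have sizes summing to $2s+1-2c>s$, so by pigeonhole they intersect, forcing some $F_{i_k}$ to lie in $(F_{j_1},\ldots,F_{j_c})$. This gives the inclusion ``$\supseteq$'' for the generators. The reverse inclusion I would prove by induction on $s-c$: pulling out $F_s$, the intersection splits using the exchange identity, and the complete-intersection hypothesis guarantees that the splitting stays clean. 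Minimality of the generating set follows because any relation among the given products would, after localization at a suitable associated prime, contradict the regular-sequence property of any $c+1$ of the $F_i$. For (2), if some $P\in\Ass(R/I_{c,\F})$ contained two distinct $c$-subsets of $\F$, then $P$ would contain some collection of at least $c+1$ of the $F_i$; by hypothesis these form a regular sequence, so $\operatorname{ht}(P)\geq c+1$. But from (1) and the given primary decomposition, $I_{c,\F}$ is unmixed of height $c$, so every associated prime has height exactly $c$, a contradiction.

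For (4), both inclusions should follow from the general identity $I^{(m)}=\bigcap_{P\in\Ass(R/I)}I^mR_P\cap R$ together with (2). Indeed, every $P\in\Ass(R/I_{c,\F})$ lies over a \emph{unique} complete intersection $(F_{j_1},\ldots,F_{j_c})$, so after localizing at $P$ the only ``surviving'' factor of $I_{c,\F}$ is $(F_{j_1},\ldots,F_{j_c})$. Grouping the associated primes according to the $c$-subset they lie over, and using that powers of a complete intersection are unmixed (so $J^m=\bigcap_{P\in\min J}J^mR_P\cap R$ for any complete intersection $J$), the symbolic power $I_{c,\F}^{(m)}$ collapses to the stated intersection.

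For (3), Cohen-Macaulayness is the subtlest point and I expect it to be the main obstacle. A direct inductive approach is to use the description from (4) and a Mayer--Vietoris sequence
\[
0 \to R\Big/\bigcap_{j_1<\cdots<j_c}(F_{j_1},\ldots,F_{j_c})^m \to R/J_1 \oplus R/J_2 \to R/(J_1+J_2) \to 0,
\]
where $J_1$ bundles together the primary components of the right-hand side that involve $F_s$ and $J_2$ the remaining ones; induction on $s$ then handles $J_1$ and $J_2$ separately, while the complete-intersection hypothesis is what forces $J_1+J_2$ to have one higher depth and allows the long exact sequence in local cohomology to yield the desired vanishing. An alternative, and arguably cleaner, route is to degenerate to the monomial star configuration on the variables (where Cohen--Macaulayness of the symbolic powers is well known, e.g.\ via polarization and the fact that the squarefree monomial ideal generated in a fixed degree is the Stanley--Reisner ideal of a matroid, hence shellable) and then transfer the property along the flat family constructed in \cite{GHMN}, using semicontinuity of depth. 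Either way, the heart of the argument is the interplay between the combinatorial structure of the intersection and the regularity of $c+1$ of the $F_i$.
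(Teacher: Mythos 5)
The paper does not prove this proposition: it is a quoted result, with the proof attributed to Geramita, Harbourne, Migliore and Nagel (\cite[Thms.~3.3, 3.6 and Ex.~3.4]{GHMN}), where the statements are established in the more general setting of matroid configurations. So there is no in-text argument to compare your proposal against, and I will assess it on its own terms.

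Your outline is broadly reasonable, and the second route you propose for (3) --- degenerating to the monomial star configuration and transferring along the flat family of \cite{GHMN} --- is essentially what \cite{GHMN} actually does. However, the very first step has a concrete error. To show $F_{i_1}\cdots F_{i_{s+1-c}}\in(F_{j_1},\ldots,F_{j_c})$ you need the index set $\{i_1,\ldots,i_{s+1-c}\}$ to meet $\{j_1,\ldots,j_c\}$ itself, not its complement: the correct count is $(s+1-c)+c=s+1>s$, which always holds and forces some $i_k=j_\ell$. Your count $2s+1-2c$ pairs the index set against the complement $\{1,\ldots,s\}\setminus\{j_1,\ldots,j_c\}$; the inequality $2s+1-2c>s$ only holds when $s\geq 2c$, which is not part of the hypotheses (Definition \ref{2.star} only requires $c\leq\min\{s,n\}$), and meeting the complement gives you nothing anyway --- it would exhibit an $i_k$ with $F_{i_k}\notin\{F_{j_1},\ldots,F_{j_c}\}$. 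Beyond this, the reverse inclusion for (1) (``splits using the exchange identity'') and the minimality argument are left as gestures; the paper explicitly warns (Remark \ref{3.prob}) that monomial-ideal intuition does not transfer automatically to monomials in $\F$, so these steps would need to be spelled out. Your uniqueness argument in (2) is correct provided you also note why $I_{c,\F}$ is unmixed of height $c$ (true because $\Ass(R/I_{c,\F})\subseteq\bigcup\Ass(R/(F_{j_1},\ldots,F_{j_c}))$ and each factor is a height-$c$ complete intersection), and your reduction of (4) to (2) via localization at the associated primes is sound.
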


We can now give a few examples.

\begin{Example}\label{2.ex}
Let $R=k[x_0,\ldots,x_3]$. 
\begin{itemize}
\item[(a)] Let $\F=\{x_0,\ldots,x_3\}$, then the following are monomial star configurations:
$$
I_{2,\F}=\bigcap_{0\leq i <j \leq 3}(x_i,x_j) = (x_0x_1x_2,x_0x_1x_3, x_0x_2x_3,x_1x_2x_3)
$$
and 
$$
I_{3,\F}=\bigcap_{0\leq i <j <h\leq 3}(x_i,x_j,x_h) = (x_0x_1,x_0x_2,x_0x_3,x_1x_2,x_1x_3,x_2x_3).
$$
\item[(b)] Let $F_1=x_0^3$, $F_2=(x_1-x_3)^2$, $F_3=x_2^5$, $F_4=x_1^4-x_2^2x_3^2+x_0^3x_1$, $F_5=x_0^7+x_1^7+x_2^5x_3^2$ and $\F=\{F_1,\ldots,F_5\}$. Any 4 of them form a regular sequence, thus
$$
I_{3,\F}= (F_iF_jF_h\,\mid\, 1\leq i < j < h \leq 5).
$$
\end{itemize}
\end{Example}

While the structure and minimal free resolution of $I_{c,\F}$ are now well-known (e.g. see \cite{GHMN}), much less is known about the symbolic powers $I_{c,\F}^{(m)}:=(I_{c,\F})^{(m)}$ of $I_{c,\F}$.

We recall some open questions regarding the symbolic powers of $I_c^{(m)}$ mentioned in the introduction. After each question, we summarize what is known about it.
\begin{Question}\label{Qgen}
How many minimal generators does $I_{c,\F}^{(m)}$ have? What are their degrees?
\end{Question}

 Recall that for a homogeneous ideal $J$ in $S$ one defines the {\em initial degree} of $J$ as 
$$\alpha(J) = \min\{d\in \ZZ_+\,\mid\, \text{ there exists }f\in J \text{ of }\deg(f)=d\}.$$

Partial answers to Question \ref{Qgen} are essentially only known when $I_{c,\F}$ is the ideal of a {\em monomial} star configuration; in these cases
\begin{itemize}
\item Bocci and Harbourne in \cite[Lemma~2.4.1]{BH} prove that if $m=rc$ is a multiple of $c$, then the initial degree of $I_{c,\F}^{(m)}$ is $\alpha(I_{c,\F}^{(rc)})=|\F| \cdot c$.\\
Thus $I_{c,\F}^{(rc)}$ contains no generators of degree $<|\F|\cdot c$ and at least one generator of degree $|\F|\cdot c$.

\item Lampa-Baczy\'nska and Malara in \cite[Prop~3.2 and 4.2]{LM} determine the minimal generators of $I_{c,\F}^{(m)}$ if $c\leq 2$ and $|\F|=3$ or if $c\leq 3$ and $|\F|=4$.

\item Herzog, Hibi and Trung in \cite[Prop~4.6]{HHT} prove that for $1\leq m \leq c$ one has 
$$
I_{c,\F}^{(m)} = I_{c-m+1,\F} + \sum\limits_{j=1}^{m-1} I_{c,\F}^{(j)}I_{c,\F}^{(m-j)}
$$
from these, one can determine generating sets for $I_{c,\F}^{(m)}$ which are very far from being minimal.
\end{itemize}
Combining the first partial result above with the techniques of \cite{GHMN}, one can prove that the initial degree of $I_{c,\F}^{(m)}$ is $\alpha(I_{c,\F}^{(rc)})=r\delta |\F|$ for any star configuration (not necessarily monomial), provided all forms of $\F$ have degree $\delta\ge 1$.
\\
\\
The following question is partly answered by Galetto, Geramita, Shin and Van Tuyl in \cite{GGSV} when $m\leq 3$, or when $c=2$ and $n=2$:
\begin{Question}\label{Qdef}
What is the symbolic defect $\sdef(I_{c,\F},m)=\mu(I_{c,\F}^{(m)}/I_{c,\F}^m)$?
\end{Question}
More in detail, they prove the following:
\begin{itemize}
\item if $I_{c,\F}$ is the ideal a monomial star configuration, then ${\rm sdef}(I_{c,\F}, m)=1$ if and only if $c=m=2$, see \cite[Thm~3.11]{GGSV};

\item $\sdef(I_{c,\F},2) \leq \binom{s}{c-2}$ and equality holds if $I_{c,\F}$ defines a linear star configuration, see \cite[Cor.~3.15]{GGSV};

\item $\sdef(I_{c,\F}, 3)\leq \binom{s}{c-3} + \binom{s}{c-2}\binom{s}{c-1}$, see \cite[Cor~3.17]{GGSV};

\item an upper bound for the symbolic defect when $c=2$, $m=2q$ is even, $n=2$, and $I_{c,\F}$ defines a linear star configuration in $\mathbb P_k^2$: ${\rm sdef}(I_{2,s}, 2q) \leq 1+s(q-1)$, see see \cite[Thm~3.20]{GGSV}.
\end{itemize}

\begin{Question}\label{QBetti}
What is the Betti table of $I_{c,\F}^{(m)}$?
\end{Question}

For Question \ref{QBetti} the following is known:

\begin{itemize}
\item ($c=2$) By  \cite{CF+}, the Betti table of $R/I_{2}^{(m)}$ is known for any $m\geq 1$, where $I_{2}:=I_{2,\F}$ is a monomial star configuration of codimension 2;

\item ($m=2$) In  \cite[Thm~3.2]{GHM}, the Betti table of  $R/I_{c,\F}^{(2)}$ is determined for any linear star configuration $I_{c,\F}$;

\item ($m=2$ and $c=2$) In \cite[Thm~5.3]{GGSV} the Betti table of  $R/I_{2,\F}^{(2)}$ is determined.

\end{itemize}

In the present paper we answer Questions \ref{Qgen} and \ref{Qdef} for any star configuration of hypersurfaces. Our methods provide a full answer also to Question \ref{QBetti}, however because the results would be extremely complicated to state in full generality, we choose to state them only when all forms have the same degree $\delta \geq1$. 

\section{The normal form of a monomial}

In this technical section we introduce a way of writing a monomial in a polynomial ring  inspired by Young tableaux, which we call the {\em normal form} of the monomial. The normal form is an important technical tool for the results in all subsequent sections; we prove its uniqueness and existence in a fairly general setting (see Theorem \ref{3.NF} and Propositions \ref{gcd} and \ref{mon}). 

\begin{definition}
Let $\F=\{F_1,\ldots,F_s\}$ be forms in a polynomial ring $R$ over a field $k$, we define a {\em monomial in $\F$} as an expression $M=F_{i_1}\cdots F_{i_r} \in k[F_1,\ldots,F_s]$ which is a product (possibly with repetition) of elements in $\F$. 
The monomial $M$ is {\em squarefree} if there is one expression  $M=F_{i_1}\cdots F_{i_r} $ for $M$ with $i_h\neq i_j$ for every $h\neq j$.

An ideal $I$ in $R$ is a {\em monomial ideal in $\F$} if $I$ has a generating set consisting of monomials in $\F$.
\end{definition}

For instance, let $\F=\{x_0,x_1,x_0+x_1\}$. Then $M=(x_0+x_1)^2$ is a monomial in $\F$, and $M'=x_0^2+x_0x_1=x_0(x_0+x_1)$ is a squarefree monomial in $\F$. The ideal $((x_0+x_1)^2,x_0^2+x_0x_1)$ is then a monomial ideal in $\F$.

\begin{Remark}\label{3.prob}
As anticipated in the introduction, while the use of the word ``monomial" appears natural in this setting, one should be aware that most properties of ordinary monomial ideals are not inherited in general by monomials in a set of forms $\F$. For instance, consider the following familiar properties of monomials:
\begin{enumerate}[(a)]
\item if $I$ is a monomial ideal, then all the associated primes in $R$ of $I$ are monomials;
\item if $M$ is a monomial and $I$ a monomial ideal, then $M\in I$ if and only if $M$ is a multiple of a minimal generator of $I$;
\item if $M,N_1,\ldots,N_r$ are monomials, then $(N_1,\ldots,N_r):M =  \sum_{i=1}^r (N_i:M)$.
\end{enumerate}
Then all of these statements fail, in general, if one replaces the word ``monomial" by ``monomial in $\F$".

This partly illustrates the difficulties in extending some results that hold for monomials to the case of ``monomials in $\F$". 
\end{Remark}

These problems are illustrated by the fact that \cite[Prop.3.8(1)]{GHMN} is not stated as an ``if and only if" statement, and \cite[Question~4.7]{GHMN} is stated as a question and not a theorem.
\\

{\it Proof of Remark \ref{3.prob}}
(a) Take $\F=\{F:=(x_0+x_1)^2\}\subseteq R=k[x_0,x_1]$ and $I=(F)$, then $\Ass_R(R/I)=\{(x_0+x_1)\}$, and $P=(x_0+x_1)$ is not a monomial ideal.

(b) and (c)  Take $\F=\{x_0,x_1,x_0^2+x_1^2\}$, let $I=(x_0,x_1)$, and $M=x_0^2+x_1^2$, then $M\in I$, but $M$ is not multiple of $x_0$ or $x_1$. 
Additionally, $(x_0:M) + (x_1:M) = (x_0,x_1) \neq (x_0,x_1):_RM=R$. 

\QED
\bigskip


We can now introduce the notion of normal form.

\begin{definition}\label{3.monsupp}
Let $\F$ be a set of forms in a polynomial ring $R$ over a field $k$. For any monomial $M$ in $\F$, a {\em normal form of $M$ with respect to $\F$} is an expression 
$$
M=M^{(1)}\cdots M^{(t)}, 
$$
where $M^{(i)} = \prod_{F\in S_i}F$ and $\emptyset \subseteq S_1 \subseteq S_2 \subseteq \cdots \subseteq S_{t}\subseteq \F$

The number $t$ of monomial factors in a normal form is called the {\em length} of the normal form. The minimum of all lengths of all normal forms of $M$ is denoted $\lambda_{\F}(M)$.
\end{definition}

We are grateful to Ben Blum--Smith for pointing out to us the following interpretation of the normal form of a monomial in a polynomial ring.
\begin{Remark}
Let $R=k[x_0,\ldots,x_n]$ and assume $\F=\{x_0,\ldots,x_n\}$. 
Let $H$ be the poset of all squarefree monomials in $R$ with the partial order $M\leq N$ $\Llra$ $\supp(M)\supseteq \supp(N)$. 

$H$ induces a structure of Algebra with Straightening Law (ASL) on $R=k[H]$, whose standard monomials  are precisely the monomials in $R$ written in the normal form, e.g. see \cite{Ei} or \cite[Ex.1.4(a)]{Br}.
\end{Remark}

Of course, the monomials $M^{(i)}$ appearing in the normal form need not be all distinct.
\begin{Example}\label{3.xyzw}
Let $R=k[x,y,z,w]$, $\F=\{x,y,z,w\}$, then a normal form of $M=x^2y^3zw$ is $
M=(xyzw)(xy)(y)$, i.e.
$$
M=M^{(1)}M^{(2)}M^{(3)} \; \text{ where }M^{(1)}=xyzw,\; M^{(2)} = xy, \; \text{ and }\; M^{(3)}=y.
$$
The normal form of $N=x^7y^2z^3w^6$ is
$$
N=(xyzw)(xyzw)(xzw)(xw)(xw)(xw)(x) $$
where $N^{(1)}=N^{(2)}=xyzw$, $N^{(3)} = xzw$, $N^{(4)}=N^{(5)}=N^{(6)}=xw$ and $N^{(7)}=x$.
Theorem \ref{3.NF} implies that these are the only normal forms of $M$ and $N$, thus $\lambda(M)=3$ and $\lambda(N)=7$.
\end{Example}

It is not hard to see that normal forms with respect to a $\F$ always exist, with no assumptions on $\F$ (the proof is very similar to the existence part of the proof of Theorem \ref{3.NF}). However, in general, there could be multiple distinct normal forms; therefore we define a slightly more restricted setting where we can prove at once existence and uniqueness of the normal form.

\begin{definition}\label{3.unique}
Let $R$ be a polynomial ring over a field $k$, and $\F=\{F_1,\ldots,F_s\}$ be forms in $R$. We say that $\F$ {\em allows a unique monomial support} if for any monomial $M$ in $\F$, there is only one way (up to relabelling and reordering) to write $M$ as a monomial in $\F$. 
If this is the case, 
\begin{itemize}
\item for any monomial $M=F_{i_1}^{a_{i_1}} \cdots F_{i_d}^{a_{i_d}}$, the set $\supp(M)=\{F_{i_1},\ldots,F_{i_d}\}$ is well-defined and called the {\em support of $M$ (with respect to $\F$)}.

\item Then {\em normal form of $M$ with respect to $\F$} can be written as 
\begin{equation}\label{3.def}
M=M^{(1)}\cdots M^{(t)}, \; \text{ where the } M^{(j)} \text{ are squarefree monomials in }\F, 
\end{equation}
$$\text{and }\;\;\emptyset\neq \supp(M^{(i+1)}) \subseteq \supp(M^{(i)}) \text{ for all } i=1,\ldots,t-1.$$

When the set $\F$ has been specified and there is no ambiguity, we simply refer to (\ref{3.def}) as a normal form of $M$ (omitting further reference to $\F$) and its length as $\lambda(M)$.
\end{itemize}
\end{definition}

It is clear that to have a unique monomial support one needs assumptions on $\F$. E.g. if $F_1=xy$, $F_2=zw$, $F_3=xw$ and $F_4=yz$, then the element $M=xyzw$ can be written as $F=F_1F_2=F_3F_4$. Thus $\supp(M)$ is not well-defined in this case. First, we give a couple of easy sufficient conditions.
\begin{Example}
Let $R=k[x_0,\ldots,x_n]$.
\begin{enumerate}
\item If $\F\subseteq \{x_0,\ldots,x_n\}$ is a subset of the variables of $R$, then $\F$ allows a unique monomial support (which is the ``classical" monomial support).
\item If $F_1,\ldots,F_s$ is a regular sequence in $R$, then the subalgebra $k[F_1,\ldots,F_s]\subseteq R$ is isomorphic to a polynomial ring, thus $\F=\{F_1,\ldots,F_s\}$ allows a unique monomial support. 
\end{enumerate}
\end{Example}

Since we want to construct more classes of examples, in Proposition \ref{gcd} and Proposition \ref{mon} we identify general and easily checked sufficient conditions on $\F$ ensuring that $\F$ allows a unique monomial support.
\begin{Proposition}\label{gcd}
Let $R=k[x_0,\ldots,x_n]$. If $\F=\{F_1,\ldots,F_s\}$ are forms in $R$ such that $\gcd(F_i,F_j)=1$ for every $i\neq j$, then $\F$ allows a unique monomial support. 
 \end{Proposition}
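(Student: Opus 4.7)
The plan is to use unique factorization in the UFD $R=k[x_0,\ldots,x_n]$, together with the pairwise coprimality hypothesis, to pin down each exponent in an expression $M = F_{i_1}^{a_1}\cdots F_{i_r}^{a_r}$.

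First I would fix, for each $i$, a factorization $F_i = c_i\prod_j p_{i,j}^{e_{i,j}}$ into pairwise distinct monic irreducibles $p_{i,j}$ with $c_i \in k^*$ and $e_{i,j}\geq 1$; this exists and is unique up to order since $R$ is a UFD. The hypothesis $\gcd(F_i,F_{i'})=1$ for $i\neq i'$ is equivalent to saying that the sets of irreducibles $\{p_{i,j}\}_j$ and $\{p_{i',j'}\}_{j'}$ are disjoint. In particular, for every monic irreducible $p \in R$ there is \emph{at most one} index $i$ with $p \mid F_i$.

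Next I would take two alleged expressions for the same monomial,
\[
M = F_{i_1}^{a_1}\cdots F_{i_r}^{a_r} = F_{j_1}^{b_1}\cdots F_{j_t}^{b_t},
\]
where the indices $i_1,\ldots,i_r$ are pairwise distinct, the indices $j_1,\ldots,j_t$ are pairwise distinct, and all $a_k,b_\ell\geq 1$. For each monic irreducible $p$, let $v_p(\,\cdot\,)$ denote its multiplicity. Using the disjointness from the previous paragraph, I compute
\[
v_p(M) \;=\; \sum_{k=1}^r a_k\, v_p(F_{i_k}) \;=\; \begin{cases} a_k\, e_{i_k,j} & \text{if } p = p_{i_k,j} \text{ for some } j,\\ 0 & \text{otherwise,}\end{cases}
\]
and similarly for the second expression with the $b_\ell$'s. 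Comparing, for every irreducible $p = p_{i,j}$ with $i \in \{i_1,\ldots,i_r\}$ and $i \notin \{j_1,\ldots,j_t\}$, the left-hand side is positive while the right-hand side is zero, a contradiction. Hence $\{i_1,\ldots,i_r\} \subseteq \{j_1,\ldots,j_t\}$, and by symmetry the two index sets coincide, giving a permutation $\sigma$ with $j_{\sigma(k)} = i_k$.

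Finally, for any fixed $k$ and any $j$ with $e_{i_k,j}\geq 1$, the equality $a_k e_{i_k,j} = b_{\sigma(k)} e_{i_k,j}$ forces $a_k = b_{\sigma(k)}$. Thus the two expressions agree up to relabelling, so $\F$ allows a unique monomial support. The argument is essentially a bookkeeping exercise on top of unique factorization, so no step is really an obstacle; the only point requiring attention is to note that coprimality is needed precisely to guarantee that distinct $F_i$'s contribute to \emph{disjoint} sets of irreducible factors, which is what makes the $v_p$-computation invertible.
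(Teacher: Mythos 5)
Your proof is correct and uses the same fundamental ingredients as the paper's: unique factorization in $R$ and the observation that pairwise coprimality forces distinct $F_i$'s to have disjoint sets of irreducible factors. The paper packages this more tersely as an induction with a cancellation step, reaching a contradiction because some $F_{j_1}$ would have to divide a product of elements coprime to it; you unroll the same idea into a direct comparison of multiplicities $v_p(M)$, which has the minor advantage of making the equality of the exponents (not just the index sets) fully explicit. Both arguments quietly assume each $F_i$ has positive degree so that it owns at least one irreducible factor, which is harmless in context.
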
 

\begin{proof}
We prove by induction on $d\geq 1$ that for any monomial $M=F_{i_1}^{a_{i_1}}\cdots F_{i_d}^{a_{i_d}}$ in $\F$ with $\lambda(M)=d$, the set $\supp(M):=\{F_{i_1},\ldots,F_{i_d}\}$ is well-defined. 

If not, then there are two distinct expressions $M=F_{i_1}^{a_{i_1}}\cdots F_{i_d}^{a_{i_d}}=F_{j_1}^{b_{j_1}}\cdots F_{j_u}^{b_{j_u}}$, where the $F_{i_h}, F_{j_k} \in \F$. Since $R$ is a domain, by cancellation and induction we may further assume the sets $\{F_{i_1},\ldots,F_{i_d}\}$ and $\{F_{j_1},\ldots,F_{j_u}\}$ are disjoint. Since $R$ is a UFD, then $F_{j_1}$ divides $F_{i_1}\cdots F_{i_d}$, which contradicts the assumption. 
 \end{proof}
 
 One immediately obtains that in the setting of star configurations of hypersurfaces the notion of support is well-defined. This is crucial for our description of their symbolic powers.
 \begin{Corollary}
 Let $R=k[x_0,\ldots,x_n]$, let $\F$ be $s$ forms in $R$ and fix $1\leq c < s$. If any subset of $(c+1)$ elements of $\F$ forms a regular regular sequence, then $\F$ allows a unique monomial support. 
 \end{Corollary}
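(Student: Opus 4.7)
The plan is to reduce the statement directly to Proposition \ref{gcd}; that is, to show that the stronger hypothesis of the corollary (any $(c+1)$-subset of $\F$ is a regular sequence) forces pairwise coprimality $\gcd(F_i,F_j)=1$ for all $i\neq j$, at which point Proposition \ref{gcd} delivers the conclusion.

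First I would fix two distinct indices $i\neq j$ and extract from $\F$ a regular sequence of length $2$ starting with $F_i,F_j$. Since $c<s$ we have $|\F|=s\geq c+1\geq 2$, so there are at least $c-1$ elements of $\F$ other than $F_i$ and $F_j$; choose any such $F_{k_1},\ldots,F_{k_{c-1}}$, and by hypothesis the $(c+1)$-subset $\{F_i,F_j,F_{k_1},\ldots,F_{k_{c-1}}\}$ forms a regular sequence in $R$. Because $R$ is a polynomial ring over a field, regular sequences of homogeneous elements can be reordered arbitrarily, so in particular the pair $F_i,F_j$ is itself a regular sequence.

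Next, I would translate the regular sequence condition into a coprimality statement using that $R$ is a UFD. The condition that $F_j$ is a nonzerodivisor modulo $(F_i)$ means $F_j$ avoids every associated prime of $(F_i)$. In a UFD the associated primes of $(F_i)$ are exactly the principal primes generated by the irreducible factors of $F_i$, so no irreducible factor of $F_i$ divides $F_j$; equivalently $\gcd(F_i,F_j)=1$. Applying this to every pair $i\neq j$ and invoking Proposition \ref{gcd} yields that $\F$ allows a unique monomial support.

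There is no real obstacle here: the proof is a one-step reduction, and the only subtlety worth a careful sentence is the justification that a regular sequence in a polynomial ring can be reordered so that $F_i,F_j$ appear first (equivalently, that any two elements drawn from a regular sequence in such a ring again form a regular sequence of length $2$).
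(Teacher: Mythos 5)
Your reduction to Proposition \ref{gcd} is exactly the intended argument; the paper simply asserts the corollary ``immediately'' after that proposition, and you supply the missing details. The one step you flag as needing care — that homogeneous regular sequences of positive degree in a polynomial ring can be permuted, so any two-element subsequence is itself regular — is standard (e.g.\ by localizing at the homogeneous maximal ideal), and the rest (UFD + nonzerodivisor mod $(F_i)$ forces $\gcd(F_i,F_j)=1$) is correct.
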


One may wonder whether the sufficient condition in Proposition \ref{gcd} is also necessary; this is not the case. For instance, in $R=k[x,y]$, the support of a monomial in $\F=\{x^2,xy\}$ is well-defined, despite the fact that $\gcd(x^2,xy)=x$. Indeed, more generally, one can prove the following

 \begin{Proposition}\label{mon}
  Let $R=k[x_0,\ldots,x_n]$. Let $F_1,\ldots,F_s$ be monomials in $R$ such that for every $1\leq i \leq s$, except at most one, one has $\supp(F_i)\not \subseteq \bigcup_{j\neq i}\supp(F_j)$. Then $k[F_1,\ldots,F_s]$ is isomorphic to a polynomial ring in $s$ variables, thus in particular $\F=\{F_1,\ldots,F_s\}$ allows a unique monomial support. 
    \end{Proposition}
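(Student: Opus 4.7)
The plan is to prove directly that $F_1, \ldots, F_s$ are algebraically independent over $k$; the polynomial ring statement follows immediately, and the uniqueness of monomial support in $\F$ is then just a restatement of the fact that distinct exponent vectors $(a_1, \ldots, a_s)$ yield distinct elements $\prod_i F_i^{a_i}$ of $R$.

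After relabelling, I would assume that the possibly exceptional index is $s$, so that for each $i \in \{1, \ldots, s-1\}$ I can choose a \emph{private variable} $x_i^{\ast} \in \supp(F_i) \setminus \bigcup_{j \neq i} \supp(F_j)$, and denote by $e_i \geq 1$ its exponent in $F_i$. The key observation is that $x_i^{\ast}$ appears in the product $\prod_j F_j^{a_j}$ with exponent exactly $a_i e_i$, and does not appear in any other factor $F_j$.

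Now I would show that distinct exponent vectors produce distinct monomials in $R$. Suppose $\prod_j F_j^{a_j} = \prod_j F_j^{b_j}$. If $a_i \neq b_i$ for some $i \leq s-1$, then comparing the exponent of $x_i^{\ast}$ on both sides yields $a_i e_i = b_i e_i$, a contradiction. Otherwise $a_i = b_i$ for all $i \leq s-1$, and cancelling the common factor in the integral domain $R$ reduces the equation to $F_s^{a_s} = F_s^{b_s}$; comparing total degrees and using $\deg(F_s) \geq 1$ forces $a_s = b_s$. From here, any polynomial relation $\sum_{\alpha} c_{\alpha} \prod_i F_i^{\alpha_i} = 0$ becomes a $k$-linear relation among pairwise distinct monomials of $R$, forcing every $c_{\alpha}$ to vanish.

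The only delicate point is the treatment of the (at most one) exceptional index $s$, which cannot be addressed by the private-variable trick; the cancellation-and-degrees step is precisely what handles it, and this is where the hypothesis that the exception is unique becomes essential.
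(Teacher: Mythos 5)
Your proof is correct and takes a genuinely different, and in fact cleaner, route than the paper's. The paper proves injectivity of the evaluation map $\pi:k[y_1,\ldots,y_s]\to k[F_1,\ldots,F_s]$ by induction on $s$: it reduces to showing that any relation $\sum_{j}a_jF_j=0$ with $a_j\in k[F_1,\ldots,F_s]$ forces all $a_j=0$, and the inductive step iteratively ``shifts'' the private variable between terms of the relation to push it into the smaller subalgebra $k[F_r\mid r\neq i]$. Your argument instead establishes the stronger and more transparent fact that the assignment $\alpha\mapsto F^{\alpha}=\prod_i F_i^{\alpha_i}$ is injective on exponent vectors: the private variables handle every coordinate but the exceptional one, and cancellation plus a degree count handles the last. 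Since distinct monomials of $R$ are automatically $k$-linearly independent, algebraic independence of the $F_i$ is then immediate with no induction, and the ``unique monomial support'' conclusion is exactly the injectivity you proved, rather than a corollary of it. The one implicit hypothesis you use that is worth flagging (and that the paper also uses implicitly) is that the $F_i$ are nonconstant; this is what makes $\deg(F_s)\geq 1$ available for the exceptional index and is needed for the proposition to be true at all.
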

 
 \begin{proof}
 Let $\pi:k[y_1,\ldots,y_s] \lra k[F_1,\ldots,F_s]$ be the natural epimorphism of $k$-algebras defined by $\pi(y_j)=F_j$. We prove $\pi$ is injective by induction on $s\geq 1$. If $s=1$ there is nothing to prove, so we may assume $s\geq 2$.  To prove injectivity of $\pi$ let $a_1,\ldots,a_s\in k[F_1,\ldots,F_s]$ be polynomials such that $\sum_{j=1}^s a_jF_j=0$, we need to show that all the $a_j=0$. 
 
(1) First, assume there exists an index $i$ with $a_i\neq 0$ and a variable $x\in \supp(F_i)\setminus \bigcup_{j\neq i}\supp(F_j)$. Since $x$ divides $F_i$ and does not divide the other $F_j$, then there exists $h \neq i$ such that $x$ divides one of the monomials in $a_h$. If we write $a_h = \sum c_{\ul{\alpha}}F^{\ul{\alpha}}$ where $0\neq c_{\ul{\alpha}}\in k$ and $F^{\ul{\alpha}}=F_1^{\alpha_1}\cdots F_s^{\alpha_s}$ is a monomial in $k[F_1,\ldots,F_s]\subseteq R$, we then have that $x$ divides $F^{\ol{\ul{\alpha}}}$ for some index $\ol{\ul{\alpha}}=(\ol{\alpha}_1,\ldots,\ol{\alpha}_s)$ with $c_{\ol{\ul{\alpha}}}\neq 0$. Since $x$ does not divide $F_j$ for $j\neq i$, this implies $\ol{\alpha}_i>0$. Let $a_h' = a_h - c_{\ol{\ul{\alpha}}} F^{\ol{\ul{\alpha}}} $ and $a_i' = a_i + c_{\ol{\ul{\alpha}}} F^{\ul{\widehat{\alpha}}}$ where $\ul{\widehat{\alpha}}$ is obtained from $\ol{\ul{\alpha}}$ by subtracting $1$ to $\ol{\alpha}_i$ and adding 1 to $\ol{\alpha}_h$. 

We can iterate this procedure until we may assume that there exists an expression $\sum_{j=1}^s b_jF_j=0$ with $x$  not appearing in the monomial support of any of the $b_j$, thus $b_j \in A=k[F_r\,\mid\,r\neq i]$. Since also $F_j \in A$ while $F_i$ is a monomial divisible by $x$, it follows that $b_i=0$. Thus, we have an expression $\sum_{j\neq i }b_j F_j=0$ with $b_j\in A=k[F_r\,\mid\,r\neq i]$ for all $j$. By induction hypothesis, this implies that also $b_j=0$ for all $j\neq i$, proving injectivity of $\pi$. 

(2) We may then assume we are not in (1), which immediately implies (by assumption) that there is only one index $j$ with $a_j\neq 0$, i.e. $a_jF_j=0$ (and, additionally, $\supp(F_j)\subseteq \bigcup_{h\neq j}\supp(F_h)$, but this is irrelevant at this point). Since $k[F_1,\ldots,F_s]\subseteq R$ is a domain, this implies that $a_j=0$. This finishes the proof. 
 
 \end{proof}

\begin{Remark}\label{3.rem}
It follows from the definition that if $\F$ allows a unique monomial support and $N_1,\ldots,N_t$ are monomials in $\F$, then $\supp(N_1\cdots N_t)=\supp(N_1)\cup \cdots \cup \supp(N_t)$.
\end{Remark}
 
 We now prove existence and uniqueness of the normal form.
 
 \begin{Theorem}\label{3.NF}[Existence and uniqueness of the normal form]
 Let $\F$ be a set of forms in $R=k[x_0,\ldots,x_n]$. If $\F$ allows a unique monomial support, then for every monomial $M$ in $\F$ there exists a unique normal form of $M$ with respect to $\F$. 
 
 In particular, in this case $\lambda_{\F}(M)$ is the length of the normal form of $M$.
 \end{Theorem}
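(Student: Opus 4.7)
The plan is to prove existence constructively via a greedy/slicing algorithm, and then deduce uniqueness by reading off the supports $\supp(M^{(j)})$ directly from the exponents of $M$.

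First, since $\F$ allows a unique monomial support, every monomial $M$ in $\F$ admits a well-defined sequence of exponents: write $M = \prod_{F \in \F} F^{v_F(M)}$ with $v_F(M) \in \mathbb Z_{\geq 0}$. Set $t := \max_{F \in \F} v_F(M)$ and, for $j = 1, \ldots, t$, define
$$S_j := \{F \in \F \;:\; v_F(M) \geq j\}, \qquad M^{(j)} := \prod_{F \in S_j} F.$$
By construction the $S_j$ are nonempty and nested, $S_1 \supseteq S_2 \supseteq \cdots \supseteq S_t$, and each $M^{(j)}$ is a squarefree monomial in $\F$. Moreover, $\prod_{j=1}^t M^{(j)} = \prod_{F \in \F} F^{|\{j : v_F(M) \geq j\}|} = \prod_F F^{v_F(M)} = M$, so $M = M^{(1)}\cdots M^{(t)}$ is a normal form with respect to $\F$. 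This handles existence.

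For uniqueness, suppose $M = N^{(1)}\cdots N^{(u)}$ is any normal form, so each $N^{(j)}$ is squarefree in $\F$ and $\supp(N^{(j+1)}) \subseteq \supp(N^{(j)})$. By Remark \ref{3.rem} (using that $\F$ allows a unique monomial support), the support is additive across products, so for every $F \in \F$,
$$v_F(M) \;=\; \bigl|\{\, j \in \{1,\ldots,u\} \;:\; F \in \supp(N^{(j)}) \,\}\bigr|.$$
Because the supports $\supp(N^{(j)})$ form a descending chain, the set of indices $j$ for which $F \in \supp(N^{(j)})$ is an initial segment $\{1,2,\ldots,v_F(M)\}$. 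Consequently $\supp(N^{(j)}) = \{F \in \F : v_F(M) \geq j\} = S_j$ for each $j$, which forces $u = \max_F v_F(M) = t$ and $N^{(j)} = M^{(j)}$ for all $j$. This proves uniqueness, and in particular shows that the length $t$ of the unique normal form coincides with $\lambda_{\F}(M)$.

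The argument is essentially bookkeeping once the key observation is made: the nested-support condition rigidifies the factorization so that each $M^{(j)}$ is determined, level by level, by the exponents of $M$. The only subtle point, and the one place where the hypothesis on $\F$ is genuinely used, is in asserting that exponents $v_F(M)$ and supports of products are well-defined; this is exactly guaranteed by the unique monomial support assumption and Remark \ref{3.rem}, so no further obstacle arises.
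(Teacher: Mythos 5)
Your proof is correct, and it takes a genuinely different (non-inductive) route to the same decomposition. The paper proves existence and uniqueness together by induction on the total $\F$-degree of $M$: it peels off $M^{(1)} = \prod_{F \in \supp(M)} F$ (forced, since any normal form must have $\supp(M^{(1)}) = \supp(M)$), divides, and inducts on the quotient $M/M^{(1)}$. You instead give a closed-form description of the answer up front, via the slice sets $S_j = \{F : v_F(M) \geq j\}$, verify the telescoping identity $\prod_j \prod_{F \in S_j} F = M$ for existence, and for uniqueness observe that the descending-support condition forces $\{j : F \in \supp(N^{(j)})\}$ to be an initial segment of length $v_F(M)$, which pins down every $\supp(N^{(j)})$ simultaneously. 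Iterating the paper's peel-off reproduces exactly your $S_j$'s, so the two constructions agree; what your version buys is a non-inductive, level-by-level argument that makes the Young-diagram picture (the $S_j$ are the columns of the diagram with row lengths $v_F(M)$, i.e.\ the conjugate partition), alluded to in the paper's abstract, fully explicit. One minor citation imprecision: the fact you actually invoke is additivity of the exponents $v_F$ under products, which follows directly from the unique-monomial-support hypothesis (it says precisely that the exponent vector of any monomial in $\F$ is well-defined, hence additive), whereas Remark \ref{3.rem} as stated only gives that supports union; this is harmless but worth flagging so the appeal lands on the right fact.
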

 
\begin{proof}
First, observe that by Remark \ref{3.rem}, if any such form (\ref{3.def}) exists, then one has $\supp(M)=\supp(M^{(1)})$. 
Write $M=F_{i_1}^{a_{i_1}}\cdots F_{i_d}^{a_{i_d}}$ with $a_{i_h}\geq 1$, we prove both parts of the statement by induction on $a:=a_{i_1}+\ldots +a_{i_d}\geq 1$. If $a=1$, then $M=F_{i_1}$ is the only normal form of $M$.

Assume then $a>1$. Let $M^{(1)} = \prod_{F_j\in \supp(M)}F_j$ denote the product of all the $F_j$ in the support of $M$, let $M'=M/M^{(1)}$. By  Remark \ref{3.rem} and the definition of $M^{(1)}$ we have $\supp(M')\subseteq \supp(M)=\supp(M^{(1)})$. 

Since $M'$ is the product of $a'<a$ of the $F_j$, then by induction hypothesis, there exists a unique way to write $M'$ in the form (\ref{3.def}), say 
$M'=M^{(2)}M^{(3)} \cdots M^{(u)}$. We claim that $M=M^{(1)}M^{(2)}M^{(3)} \cdots M^{(u)}$ satisfies condition (\ref{3.def}).

Indeed, $M^{(1)}$ is squarefree by construction, and each $M^{(i)}$ for $i\geq 2$ is squarefree by induction. Additionally, for every $2\leq i \leq u-1$ we have $\supp(M^{(i+1)})\subseteq \supp(M^{(i)})$ by inductive hypothesis, and $\supp(M^{(2)})\subseteq \supp(M^{(1)})$ by the above. 

This proves existence in the inductive step. Uniqueness follows from the initial observation that any form (\ref{3.def}) for $M$ must have $\supp(M^{(1)})=\supp(M)$ and the assumption on $\F$, thus the only option for $M^{(1)}$ is precisely $M^{(1)} = \prod_{F_j\in \supp(M)}F_j$.
So the choice of $M^{(1)}$ is unique by the above, and the choice of the other squarefree monomials $M^{(i)}$ is unique by induction. This proves the uniqueness part of the statement. 

\end{proof}

\begin{Corollary}\label{3.monomial}
Any monomial in a polynomial ring has a unique normal form.
\end{Corollary}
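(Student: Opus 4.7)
The plan is to deduce this corollary directly from Theorem \ref{3.NF} by making the natural choice $\F=\{x_0,\ldots,x_n\}$ for any polynomial ring $R=k[x_0,\ldots,x_n]$. With this choice, every monomial in $R$ (in the ordinary sense) is a monomial in $\F$, so the statement becomes the assertion that the normal form with respect to $\F$ exists and is unique.

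To invoke Theorem \ref{3.NF} we need only verify that $\F=\{x_0,\ldots,x_n\}$ allows a unique monomial support. This is immediate from Proposition \ref{gcd}: distinct variables $x_i,x_j$ satisfy $\gcd(x_i,x_j)=1$, so the hypothesis of that proposition is trivially met. (Alternatively, one may invoke Proposition \ref{mon}, since $\supp(x_i)=\{x_i\}$ is certainly not contained in $\bigcup_{j\neq i}\supp(x_j)=\{x_j\,\mid\,j\neq i\}$ for any $i$; or one may simply observe that $x_0,\ldots,x_n$ form a regular sequence.)

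With unique monomial support in hand, Theorem \ref{3.NF} applies verbatim and yields the existence and uniqueness of the normal form of any monomial in $R$ with respect to the variables. Since no step requires a separate calculation, there is no real obstacle here; the only thing to be careful about is making the identification explicit, namely that the ``monomials in $\F$'' in the sense of Definition 3.1 are precisely the usual monomials in $R$ when $\F$ is taken to be the set of variables.
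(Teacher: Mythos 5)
Your proof is correct and takes the same (essentially unique) route the paper intends: set $\F=\{x_0,\ldots,x_n\}$, verify unique monomial support via Proposition \ref{gcd} (or the regular-sequence observation), and apply Theorem \ref{3.NF}. The paper states the corollary without proof precisely because this deduction is immediate, and you have spelled it out accurately.
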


In the following section, we employ the normal form to unveil the structure of the symbolic powers of star configurations.

\section{Symbolic powers of star configurations: minimal generating sets and symbolic defects}

In this section we  provide the structure theorems for the symbolic powers of all star configurations of hypersurfaces, see Theorems \ref{4.Symb} and \ref{4.Symb3}. In particular, we can compute their minimal number of generators (which was not known, even for monomial star configurations, see Question \ref{Qgen}), and the subtler invariant  $\sdef(I,m)=\mu(I^{(m)}/I^m)$ introduced in \cite{GGSV}, called the $m$th symbolic defect of $I$, which provides a first measure of how different are $I^{(m)}$ and $I^m$. \\
\\
Since we want to discuss star configurations of hypersurfaces, these are our running assumptions:
\begin{Setting}\label{4.set}
We let $k$ be a field of any characteristic, and let
\begin{itemize}
\item $R=k[x_0,\ldots,x_n]$ be a polynomial ring over a field;
\item $c,s$ be integers with $1\leq c < s$;
\item $\F=\{F_1,\ldots,F_s\}$ be forms in $R$ such that any $(c+1)$ of them form a regular sequence.
\end{itemize}
Additionally, 
\begin{itemize}
\item the ideal $I_c=I_{c,\F}$ denotes the star configuration of height $c$ on $\F$, i.e. 
$$I_c=I_{c,\F}=\bigcap_{1\leq j_1<\cdots < j_c\leq s} (F_{j_1},\ldots,F_{j_c});$$
\item by ``a monomial" we mean ``a monomial in $\F$"; by its ``normal form", we mean ``its normal form with respect to $\F$".

\end{itemize}
\end{Setting}

We use the normal form of $M$ to introduce  the following useful invariant. 

\begin{definition}\label{4.def}
Let $R,c,s, \F$ be as in Setting \ref{4.set}. For any monomial $M$ in $\F$, let $M=M^{(1)}M^{(2)}\cdots M^{(t)}$ be its normal form.  The {\em symbolic degree of $M$} with respect to $c$ and $\F$ is the non-negative integer
$$
\Sdeg_{c,\F}(M)=\sum\limits_{i=1}^t \max\{0, c - s + |\supp(M^{(i)})|\}.
$$
When $\F$ is understood, we simply write $\Sdeg_c(M):=\Sdeg_{c,\F}(M)$. If $c$ is also understood, we write $\Sdeg(M):=\Sdeg_{c,\F}(M)$.
\end{definition}

The statement of Theorem \ref{4.Sdeg} explains the choice of the name.

 \begin{Example}\label{4.ex1}
 Let $R=k[x, y, z, w]$ and $\F=\{x,y,z,w\}$. Let $M,N$ be as in Example \ref{3.xyzw}, then
$$\Sdeg_{2}(M) = \max\{0, 2-4+4\} + \max\{0, 2-4+2\} + \max\{0, 2-4+1\} = 2,$$
while
$$\Sdeg_{3}(M) = \max\{0, 3-4+4\} + \max\{0, 3-4+2\} + \max\{0, 3-4+1\} = 4,$$
Also, $\Sdeg_2(N) = 5$, while $\Sdeg_3(N)=11$.
\end{Example}

\begin{Example}\label{4.ex2}
Let $R=k[x_0,\ldots,x_5]$ and let $\F=\{F_1,\ldots,F_7\}$ be 7 forms such that any $5$ of them form a regular sequence. Let $M=F_1^4F_2^3F_3^2F_4F_5^2F_6^3F_7^4$, then its normal form is
$$
M=(F_1F_2F_3F_4F_5F_6F_7)(F_1F_2F_3F_5F_6F_7)(F_1F_2F_6F_7)(F_1F_7).
$$
Then, for instance, $\Sdeg_4(M)=4+3+1+0=8$, while $\Sdeg_2(M)=2+1+0+0=3$.
\end{Example}

By definition, $\Sdeg(M)\geq 0$. We first characterize when $\Sdeg(M)=0$.
\begin{Lemma}\label{4.0}
Let $R,c,s,\F$ be as in Setting \ref{4.set}. 

For any monomial $M$ one has $\Sdeg_{c,\F}(M)=0$ if and only if $|\supp(M)|\leq s-c$.
\end{Lemma}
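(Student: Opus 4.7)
The plan is to exploit two structural facts about the normal form to reduce the stated equivalence to a single inequality on the largest support.

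First, since each summand $\max\{0,\, c - s + |\supp(M^{(i)})|\}$ in the definition of $\Sdeg_{c,\F}(M)$ is non-negative, the sum vanishes if and only if every summand vanishes, i.e. if and only if $|\supp(M^{(i)})| \leq s - c$ for all $i = 1, \ldots, t$. So the first step is just this observation.

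Next I would use the chain condition built into the normal form: by Definition \ref{3.unique} we have $\supp(M^{(t)}) \subseteq \cdots \subseteq \supp(M^{(2)}) \subseteq \supp(M^{(1)})$, so $|\supp(M^{(i)})| \leq |\supp(M^{(1)})|$ for every $i$. Consequently all $t$ inequalities $|\supp(M^{(i)})| \leq s - c$ hold simultaneously if and only if the single inequality $|\supp(M^{(1)})| \leq s - c$ holds.

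Finally I would identify $|\supp(M^{(1)})|$ with $|\supp(M)|$. By Remark \ref{3.rem}, $\supp(M) = \supp(M^{(1)}) \cup \cdots \cup \supp(M^{(t)})$, and by the chain condition this union equals $\supp(M^{(1)})$. Combining with the previous step yields $\Sdeg_{c,\F}(M) = 0 \Longleftrightarrow |\supp(M)| \leq s - c$, which is the desired equivalence.

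No step appears to be an obstacle; the proof is essentially a two-line unpacking of Definition \ref{4.def} together with the chain property of the normal form guaranteed by Theorem \ref{3.NF}. The only subtle point is to invoke uniqueness of the normal form (which requires $\F$ to allow a unique monomial support, already built into Setting \ref{4.set} via the regular-sequence hypothesis) so that $\supp(M)$ and the $\supp(M^{(i)})$ are well-defined.
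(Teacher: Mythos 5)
Your proof is correct and follows essentially the same route as the paper: both reduce the question to the non-negativity of the summands, use the chain $\supp(M^{(i+1)})\subseteq\supp(M^{(i)})$ to reduce everything to the first factor, and identify $\supp(M^{(1)})=\supp(M)$. The only cosmetic difference is that you cite Remark \ref{3.rem} for that last identification while the paper cites the proof of Theorem \ref{3.NF}; either is fine.
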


\begin{proof}
From the proof of Theorem \ref{3.NF} we have that $\supp(M^{(1)})=\supp(M)$. 
Now, since $\supp(M^{(i+1)})\subseteq \supp(M^{(i)})$, then 
$\max\{0,c-s+|\supp(M^{(i+1)})|\}\leq \max\{0,c-s+|\supp(M^{(i)})|\}$ for every $i$. 

``$\Lra$" assume $\Sdeg_{c,\F}(M)=0$. Since each number $\max\{0,c-s+|\supp(M^{(i)})|\}$ is non-negative, and their sum is zero, then $c-s+|\supp(M^{(i)})|\leq 0$ for every $i$.  Since $\supp(M)=\supp(M^{(1)})$ (by the proof of Theorem \ref{3.NF}), the statement follows.

``$\Lla$" Assume $|\supp(M)|\leq s-c$. 
For every $i$ we have 
$$ 0\leq  \max\{0,c-s+|\supp(M^{(i)})|\} \leq  
 \max\{0,c-s+|\supp(M)|\}=0 
$$
thus $\max\{0,c-s+|\supp(M^{(i)})|\}$ are all zero, and therefore their sum, which is $\Sdeg(M)$, is zero.
\end{proof}

The definition of $\Sdeg(M)$ and the proof of Theorem \ref{3.NF} yield the following:
\begin{Remark}\label{4.sq}
Let $R,c,s,\F$ be as in Setting \ref{4.set}. 
\begin{enumerate}
\item If $N$ is a squarefree monomial in $\F$, then $\lambda(N)=1$ and $N=N^{(1)}$ is its normal form. Thus for any $c$ one has $\Sdeg_c(N)=\max\{0, c-s+|\supp(N)|\}$. 

\item If $M=M^{(1)}\cdots M^{(t)}$ is the normal form of a monomial $M$, then 
$$
\Sdeg_c(M)=\sum\limits_{i=1}^t \Sdeg_c(M^{(i)}).
$$
\end{enumerate}
\end{Remark}

For our intended application, we need to understand better the behavior of this invariant when we multiply two monomials. Since  it is not easy to describe the normal form of $MN$ from the normal forms of $M$ and $N$, then it is not clear what is a sharp relation between $\Sdeg_{c}(MN)$, $\Sdeg_{c}(M)$ and $\Sdeg_{c}(N)$.

It is easily proved that $\Sdeg_{c}(MN) \geq \max\{\Sdeg_{c}(M), \Sdeg_{c}(N)\}$, so one may wonder whether this is, in general, the sharpest possible inequality. The answer is negative: in Proposition \ref{4.mult}(3) we prove the following sharper inequality
$$\Sdeg_{c}(MN)\geq \Sdeg_{c}(M) + \Sdeg_{c}(N).$$

Interestingly, however, our proof of this sharper inequality (proved in Proposition \ref{4.mult}(3)) heavily relies on a symbolic power interpretation of the symbolic degree, which is determined in the next result. We identify the symbolic degree of a monomial with the smallest symbolic power of $I_{c,\F}$ containing the given monomial. We use the convention that $I^{(0)}=R$.

\begin{Theorem}\label{4.Sdeg}
Let $R,c,s,\F$ be as in Setting \ref{4.set}. For any monomial $M$ one has
$$
\Sdeg_{c}(M) = \max\{u \,\mid\, M\in I_{c}^{(u)} \} = \text{ the integer } m \text{ with }M\in I_c^{(m)}\setminus I_{c}^{(m+1)}.
$$

\end{Theorem}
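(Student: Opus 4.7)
The plan is to use Proposition \ref{2.ghmn}(4), which writes $I_c^{(m)} = \bigcap_{|J|=c} P_J^m$ where $P_J := (F_{j_1},\ldots,F_{j_c})$ and $J = \{j_1<\cdots<j_c\}$ runs over $c$-subsets of $\{1,\ldots,s\}$. This reduces the problem to computing, for each such $J$, the $P_J$-adic order $\ord_{P_J}(M)$ of the monomial $M$, and then taking the minimum over $J$. Concretely, writing $M = F_1^{b_1}\cdots F_s^{b_s}$ using unique monomial support, the statement becomes the identity
\[
\min_{|J|=c}\,\sum_{j\in J}b_j \;=\; \Sdeg_{c,\F}(M).
\]

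For the first (algebraic) step, I would argue that $\ord_{P_J}(F_1^{b_1}\cdots F_s^{b_s}) = \sum_{j\in J}b_j$. Since $F_{j_1},\ldots,F_{j_c}$ is a regular sequence, the associated graded ring $\mathrm{gr}_{P_J}(R)$ is isomorphic to the polynomial ring $(R/P_J)[T_1,\ldots,T_c]$, where $T_i$ is the initial form of $F_{j_i}$; in particular each $T_i$ is a nonzerodivisor in $\mathrm{gr}_{P_J}(R)$, so multiplication by $F_{j_i}$ shifts the order up by exactly one. For $k\notin J$, the assumption that any $c+1$ of the $F$'s form a regular sequence makes $F_k$ a nonzerodivisor modulo $P_J$, and a short induction on $m$ using the exact sequence
\[
0 \lra P_J^{m-1}/P_J^m \lra R/P_J^m \lra R/P_J^{m-1} \lra 0
\]
together with the freeness of $P_J^{m-1}/P_J^m$ over $R/P_J$ shows that $F_k$ is a nonzerodivisor on every $R/P_J^m$, so multiplication by $F_k$ preserves $P_J$-order. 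Iterating gives the additive formula, and hence $\max\{m\mid M\in I_c^{(m)}\}$ equals the sum of the $c$ smallest of the $b_i$'s.

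For the second (combinatorial) step, I would unfold the normal form $M = M^{(1)}\cdots M^{(t)}$ and set $S_i := \supp(M^{(i)})$. The nesting $S_1\supseteq S_2\supseteq\cdots\supseteq S_t$ lets one identify $b_k = \#\{i\mid F_k\in S_i\}$, and dually $|S_i| = \#\{k\mid b_k\geq i\}$. Picturing this as a Young-type $0/1$ matrix with row sums $|S_i|$ and column sums $b_k$, the sum of the $s-c$ largest column sums equals $\sum_i \min\{|S_i|,\,s-c\}$ (the maximum number of $1$'s one can pick from each row when restricting to $s-c$ columns, attained thanks to the nesting). Subtracting from the total $\sum_k b_k = \sum_i |S_i|$ gives
\[
\min_{|J|=c}\,\sum_{j\in J}b_j \;=\; \sum_{i=1}^t \bigl(|S_i|-\min\{|S_i|,s-c\}\bigr) \;=\; \sum_{i=1}^t \max\{0,\,c-s+|S_i|\} \;=\; \Sdeg_{c,\F}(M).
\]
Combining the two identities yields the theorem.

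The main obstacle is the additivity of the $P_J$-order on an arbitrary product of $F_k$'s: one has to be careful because $R/P_J$ need not be a domain, so the clean ``valuation-like'' argument must be replaced by the nonzerodivisor argument on the associated graded pieces outlined above. Once that is established, the remaining combinatorial identity is essentially a statement about partitions and their conjugates and poses no serious difficulty.
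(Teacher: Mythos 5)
Your proof is correct and takes a genuinely different route from the paper's. Both arguments ultimately rest on the polynomial-ring structure of $\mathrm{gr}_{P_J}(R)\cong (R/P_J)[T_1,\ldots,T_c]$ for a complete intersection, but the paper proves the two halves $M\in I_c^{(m)}$ and $M\notin I_c^{(m+1)}$ separately: the first by factoring $M$ into its squarefree normal-form pieces and using $I^{(a)}I^{(b)}\subseteq I^{(a+b)}$; the second by constructing (via a somewhat delicate ``Claim'') one particular $c$-subset $U$ achieving the minimum, then localizing at an associated prime of $\a_U$ so that the $F_h$ with $h\notin U$ become units, with the case $m=0$ handled by a separate primary-decomposition argument. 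You instead establish the uniform order formula $\ord_{P_J}(M)=\sum_{j\in J}b_j$ for every $c$-subset $J$ at once --- your nonzerodivisor argument on the free $R/P_J$-modules $P_J^d/P_J^{d+1}$ replaces the paper's localization --- and then reduce the theorem to the single combinatorial identity $\min_{|J|=c}\sum_{j\in J}b_j=\Sdeg_{c,\F}(M)$, which you verify as a statement about conjugate partitions via the nested $0/1$ matrix attached to the normal form. Your version is tidier: it treats $m=0$ on the same footing as $m>0$, it makes no ad hoc choice of $U$, and it exposes the normal form as literally the conjugate of the exponent vector $(b_1,\ldots,b_s)$. The trade-off is that you need the small greedy lemma about nested $0/1$ matrices, which the paper avoids by building $U$ directly; the two are of comparable difficulty, and both steps of your reduction are sound.
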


For instance, let $R,\F$ and $M$ be as in Example \ref{4.ex2}. Since $\Sdeg_2(M)=3$, then $M\in I_2^{(3)}\setminus I_2^{(4)}$; since $\Sdeg_4(M)=8$, then $M\in I_4^{(8)}\setminus I_4^{(9)}$.

Theorem \ref{4.Sdeg} is the key to prove the structure theorem for any symbolic power of any star configuration of hypersurfaces, Theorem \ref{4.Symb}, which in turn allows the computation of their minimal number of generators and  symbolic defects, see Theorems \ref{4.Symb2} and \ref{4.Symb3}, and Corollaries \ref{4.sdef}, \ref{4.sdef2}, \ref{4.sdef3}, and \ref{m<5}.

\begin{proof}
For any monomial $N$, we write $\Sdeg(N)$ for $\Sdeg_{c,\F}(N)$; we also write $I$ for $I_{c,\F}$. 
Let $m :=\Sdeg(M)$. We first show that $M\in I^{(m)}$ when $M$ is squarefree. In this case, if $m=0$, there is nothing to prove. If $m>0$ then  by Remark \ref{4.sq} we have $|\supp(M)|\geq s-c+m$. Then for any subset $J\subseteq \F$ with $|J|=c$ one has $|\supp(M) \cap J | \geq m$, i.e. $M\in (F\in \F\,\mid\, F \in J)^m$, thus, by Proposition \ref{2.ghmn}(4) $M\in I^{(m)}$.

For the general case, let $M=M^{(1)}\cdots M^{(t)}$ be the normal form of $M$. Since each $M^{(i)}$ is squarefree, then by the above, $M^{(i)} \in I^{(\Sdeg(M^{(i)}))}$, thus 
$$M\in \prod_{i=1}^t I^{(\Sdeg(M^{(i)}))} \subseteq I^{(\sum\limits_{i=1}^t \Sdeg(M^{(i)}))} = I^{(m)},$$
where the rightmost equality follows by Remark \ref{4.sq}.

To conclude the proof it suffices to prove that $M\notin I^{(m+1)}$. If $m=0$, then by Lemma \ref{4.0} we have $|\supp(M)|\leq s-c$. If we were working with ordinary monomials, Proposition \ref{2.ghmn}(1) would now allow us to conclude the statement. However, as explained in Section 1.2 and Remark \ref{3.prob}, to prove $M\notin I$ it is not sufficient to show that $M$ is not a multiple of any minimal generator of $I$. Instead, we use the primary decomposition of $I$. Let $H\subseteq \supp(M)^C :=\F\setminus \supp(M)$ be a subset of $c$ elements of $\supp(M)^C$; by assumption on $\F$, the ideal $\a_H=(F\,\mid\, F\in H)$ is a complete intersection. Let $P\in \Ass(R/\a_H)$ be an associated prime, then $\h(P)=c$; since $I\subseteq \a_H$ has height $c$, it follows that $P\in \Ass(R/I)$. 
Since $P$ contains $H$, then by Proposition \ref{2.ghmn}(2) the prime $P$ does not contain any element in $H^C=\F\setminus H$, in particular it does not contain any element in $\supp(M)$, therefore $M$ is a unit in $R_P$. This shows that $M\notin IR_P$, and thus $M\notin I$.

Next, assume $m>0$, and thus $|\supp(M^{(1)})|=|\supp(M)|>s-c$ by Lemma \ref{4.0}. We may then define the number 
$$j=\max\{i \,\mid\, |\supp(M^{(i)})| > s-c\}.$$
Thus, for any $1\leq i \leq j$ we may write $|\supp(M^{(i)})|= s-c +d_i$ for some $d_i \in \ZZ_+$, and then 
$$|\Sdeg(M^{(i)})| = d_i \qquad \text{ for all }1\leq i \leq j.$$
In particular, by Remark \ref{4.sq}(2), one has $\Sdeg(M)=d_1+\ldots + d_j$.\\
\\
{\bf Claim.} There exists a subset $U\subseteq \{F_1,\ldots,F_s\}$ of $c$ elements with $|U \cap \supp(M^{(i)})| = d_i$ for every $1\leq i \leq j$, and $U\cap \supp(M^{(h)})=\emptyset$ for every $h>j$.  \\
\\

Let $V := \supp(M^{(j)})^C$ be the complement of $\supp(M^{(j)})$ in $\F$; by definition $|V|=c-d_j<c$. We add to $V$ a set of $d_j$ elements of $\F$ as follows: if $j=t$, we just take $W$ to be any subset of $d_j$ elements in $\supp(M^{(j)})$; if $j<t$, we let $W$ be a subset of $d_j$ elements in $\supp(M^{(j+1)})^C$. Set 
$$U:=V\cup W.$$ 
By construction $|U|=c$. 
First observe that, independently of whether $j=t$ or $j<t$, the definitions of $U$ and of normal form give
$$
\supp(M^{(j)})^C \subseteq U, \;\text{ and }\supp(M^{(i)})^C \subseteq \supp(M^{(i+1)})^C \text{ for every }1\leq i\leq t-1.
$$

We now prove that $U$ has the claimed properties.  
We first show that $U \cap \supp(M^{(i)})=\emptyset$ for all $i>j$. If $j=t$ there is nothing to prove, so we may assume $j<t$. By construction of $U$, we have 
$$U\subseteq \supp(M^{(j)})^C \cup \supp(M^{(j+1)})^C = \supp(M^{(j+1)})^C \subseteq \supp(M^{(i)})^C$$ 
for every $i>j$. Therefore,  $U \cap \supp(M^{(i)})=\emptyset$ for $i>j$.  

Additionally, since $\supp(M^{(i)})^C \subseteq U$, then $\supp(M^{(i)}) \cap U = U \setminus \supp(M^{(i)})^C$, thus  
$$
|\supp(M^{(i)}) \cap U| = |U| - |\supp(M^{(i)})^C| = c-(s-(s-c+d_i))=d_i,
$$
for every $1\leq i \leq j$. This proves the claim.

We finish the proof of the theorem. Let $D_i = U \cap \supp(M^{(i)})$ for every $1\leq i \leq j$. Let $\a:=\a_U=(F\,\mid\, F \in U)$ be the complete intersection ideal generated by the $c$ elements of $\F$ in $U$. By Proposition \ref{2.ghmn}(4), to show that $M\notin I^{(m+1)}$ it suffices to show $M\notin \a^{m+1}$, and thus  it suffices to show that $M\notin (\a R_P)^{m+1}$ for some $P\in \Ass(R/\a^{m+1})$. 

Let $P$ be any associated prime of $R/\a^{m+1}$. Since $\a$ is a complete intersection ideal, then $\a^{m+1}$ is Cohen-Macaulay and $\Ass(R/\a)=\Ass(R/\a^{m+1})$ (e.g. \cite{CN}), thus in particular $\h(P)=c$. By assumption on $\F$, for any $F_h\in U^C=\F\setminus U$ we have $F_h\notin P$, thus $F_h$ is a unit in $R_P$. Then, by the Claim, we have 
\begin{itemize}
\item $M^{(h)}R_P=R_P$ is the unit ideal for $h>j$, 
\item and $M^{(i)}R_P = \left(\prod_{h\in D_i}F_h\right)R_P$.
\end{itemize}
It follows that $MR_P =\left(\prod_{i=1}^j \left(\prod_{h\in D_i}F_h\right)\right) R_P$. Since $\a_P=\a R_P$ is a complete intersection of height $c$, then the associated graded ring ${\rm gr}_{\a_P}(R_P)$ is isomorphic to a polynomial ring in $c$ variables; thus, in particular, each $F_h$ in $D_i$ lies in $\a_P \setminus \a_P^2$, and the order of $M$ in ${\rm gr}_{\a_P}(R_P)$ is the sum of the orders, i.e.
$\sum_{i=1}^j \left( \sum_{h\in D_i}1\right)=\sum_{i=1}^j d_i = m.$ (Recall that the order of an element $f$ in a Noetherian local ring $(A,\m)$ is $o(f):=\max\{t\in \NN_0\,\mid\, f\in \m^t\}$.)
Therefore, $M\notin (\a_P)^{m+1}$, which concludes the proof.

\end{proof}

We can now determine how the symbolic degree of a product compare with the symbolic degree of each factor.
\begin{Proposition}\label{4.mult}
Let $R,c,s,\F$ be as in Setting \ref{4.set}. Let  $M,N$ be monomials and  $F\in \F$. 

\begin{enumerate}
\item \begin{enumerate}
\item If $|\supp(M)|<s-c$, then $\Sdeg_c(FM)=\Sdeg_c(M)=0$;
\item  If $F\notin \supp(M)$ and $|\supp(M)|\geq s-c$, then $\Sdeg_c(FM)=\Sdeg_c(M)+1$;
\end{enumerate}
\item If $N$ is squarefree and $\supp(M)\subseteq \supp(N)$, then $\Sdeg_c(MN)=\Sdeg_c(M)+\Sdeg_c(N)$.
\item One has $\Sdeg_c(MN)\geq \Sdeg_c(M)+\Sdeg_c(N)$.
\end{enumerate}

\end{Proposition}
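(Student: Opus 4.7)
My plan is to handle each of the three parts separately. Parts (1) and (2) will follow from a direct computation of the normal form of the product from the normal form of $M$, while part (3) will follow very cleanly from the symbolic-power interpretation of $\Sdeg$ provided by Theorem \ref{4.Sdeg}. This split mirrors the discussion preceding the statement: the sharper additivity in (3) seems inaccessible by direct normal-form manipulations, since the normal form of a product is in general impossible to read off from the normal forms of the factors, and Theorem \ref{4.Sdeg} is precisely the tool that makes it manageable.

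For part (1)(a), since $|\supp(FM)|\leq |\supp(M)|+1\leq s-c$, Lemma \ref{4.0} applied to both $M$ and $FM$ yields $\Sdeg_c(FM)=\Sdeg_c(M)=0$. For part (1)(b), write $M=M^{(1)}\cdots M^{(t)}$ in normal form; by the proof of Theorem \ref{3.NF} one has $\supp(M^{(1)})=\supp(M)$, and Remark \ref{3.rem} gives $\supp(FM)=\supp(M)\cup\{F\}$ since $F\notin\supp(M)$. Consequently the (unique) first squarefree factor of the normal form of $FM$ must be $F\cdot M^{(1)}$, followed by $M^{(2)},\ldots,M^{(t)}$, and the descending-support chain is clearly preserved since $\supp(M^{(2)})\subseteq\supp(M^{(1)})\subseteq\supp(FM^{(1)})$. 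Comparing Definition \ref{4.def} term by term, the two symbolic degrees differ only in the first summand, and the difference is $\max\{0,c-s+|\supp(M)|+1\}-\max\{0,c-s+|\supp(M)|\}=1$, since the hypothesis $|\supp(M)|\geq s-c$ makes both maxima equal to their positive argument. Part (2) is entirely analogous: squarefreeness of $N$ and $\supp(M)\subseteq\supp(N)$ force $\supp(MN)=\supp(N)$ by Remark \ref{3.rem}, so the first factor in the normal form of $MN$ must be $N$ itself; the remaining factors are $M^{(1)},\ldots,M^{(t)}$, and the inclusion $\supp(M^{(1)})=\supp(M)\subseteq\supp(N)$ validates the descending-support condition. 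Remark \ref{4.sq}(2) then yields $\Sdeg_c(MN)=\Sdeg_c(N)+\Sdeg_c(M)$.

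Finally, for part (3), set $a=\Sdeg_c(M)$ and $b=\Sdeg_c(N)$. By Theorem \ref{4.Sdeg}, $M\in I_c^{(a)}$ and $N\in I_c^{(b)}$, hence $MN\in I_c^{(a)}I_c^{(b)}\subseteq I_c^{(a+b)}$; another application of Theorem \ref{4.Sdeg} gives $\Sdeg_c(MN)\geq a+b$. The main conceptual point is that (3) is the only statement where the forthcoming symbolic-power interpretation of $\Sdeg$ does genuine work; without it, one would be forced to compare three normal forms whose interrelation is opaque.
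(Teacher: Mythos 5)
Your proof is correct and follows essentially the same route as the paper's: parts (1) and (2) by explicitly determining the normal form of the product and invoking Remark \ref{4.sq}, and part (3) by translating $\Sdeg$ into symbolic-power membership via Theorem \ref{4.Sdeg} and using $I_c^{(m)}I_c^{(m')}\subseteq I_c^{(m+m')}$. Your framing remark about why (3) needs the symbolic-power interpretation also matches the paper's own commentary preceding Theorem \ref{4.Sdeg}.
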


\begin{proof}
For simplicity, we write $\Sdeg(M)$ for $\Sdeg_c(M)$. 

(1) Let $M=M^{(1)}\cdots M^{(t)}$ be the normal form of $M$. (a) If $|\supp(M)|<s-c$, then $|\supp(FM)|\leq s-c$. By Lemma \ref{4.0}, both $\Sdeg(M)=\Sdeg(FM)=0$. 

(b) Since $|\supp(M^{(1)})|=|\supp(M)|\geq s-c$, then $\Sdeg(M^{(1)}) = c-s+|\supp(M^{(1)})|$. Since $F\notin \supp(M)$, then 
$$\Sdeg(FM^{(1)})= c-s+|\supp(FM^{(1)})|  =1+(c-s+|\supp(M^{(1)})|)=1+\Sdeg(M^{(1)}).$$
Now, the normal form of $FM$ is $FM = N^{(1)}M^{(2)}\cdots M^{(t)}$, where $N^{(1)}=FM^{(1)}$. It follows by Remark \ref{4.sq} that
$$
\begin{array}{ll}
\Sdeg(FM) &= \Sdeg(N^{(1)}) + \sum\limits_{i=2}^t \Sdeg(M^{(i)})=1+\Sdeg(M^{(1)})+\sum\limits_{i=2}^t \Sdeg(M^{(i)})\\ 
&= 1 + \Sdeg(M).
\end{array}$$

(2) Let $Q:=MN$, and let $M=M^{(1)}\cdots M^{(t)}$ be the normal form of $M$. Since $\supp(M)\subseteq \supp(N)$, then $\supp(Q)=\supp(N)$. Since $N$ is squarefree, the proof of Theorem \ref{3.NF} gives that the first term $Q^{(1)}$ in the normal form of $Q$ is $Q^{(1)}=N$, and since $Q/Q^{(1)}=M$, then 
$$
Q=NM^{(1)}\cdots M^{(t)} \text{ is the normal form of }Q=MN.
$$
Therefore, by Remark \ref{4.sq}, $\Sdeg(Q)=\Sdeg(N)+\Sdeg(M)$.

(3) Let $m=\Sdeg(M)$ and $m'=\Sdeg(N)$, then by Theorem \ref{4.Sdeg} one has $M\in I_{c}^{(m)}$ and $N\in I_{c}^{(m')}$. It follows that $MN \in I_{c}^{(m)}I_{c}^{(m')} \subseteq I_{c}^{(m+m')}$. 
Since $MN \in I_{c}^{(m+m')}$ then by Theorem \ref{4.Sdeg}, one has $\Sdeg(MN)\geq m+m'$. 

\end{proof}

We can now apply Theorem \ref{4.Sdeg} to provide an explicit description of the minimal generating set of symbolic powers of star configurations.
\begin{Theorem}\label{4.Symb}
Let $R,c,s,\F$ be as in Setting \ref{4.set}. 
For any $m\geq 1$, a minimal generating set of $I_{c,\F}^{(m)}$ is given by
\be\label{I_cm}
\G_{c,(m)}= \left\{ \begin{array}{l |l}
 &  \Sdeg(M)=m, \; \text{ and } |\supp(M^{(j)})|\geq s+1-c \\
M \text{ monomial in }\F										& \\ 										 &  \mbox{ for every } M^{(j)} \text{ in the normal form of }M  \end{array}\right\}
\ee
\end{Theorem}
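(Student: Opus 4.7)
The plan is to establish both inclusions $J \subseteq I_{c,\F}^{(m)}$ and $I_{c,\F}^{(m)} \subseteq J$ for $J := (\G_{c,(m)})$, and then verify minimality. The forward inclusion is immediate from Theorem \ref{4.Sdeg}, since every $M \in \G_{c,(m)}$ has $\Sdeg(M) = m$ and hence lies in $I_{c,\F}^{(m)}$. The reverse inclusion is the substantive step, combining a combinatorial ``reduction'' of $\F$-monomials with a localization argument at the associated primes of $I_{c,\F}^{(m)}$, closed off via the Cohen--Macaulayness of $I_{c,\F}^{(m)}$. Minimality then follows from a symmetry argument on sorted exponent vectors together with Proposition \ref{4.mult}(3).

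For the reduction step, I claim every $\F$-monomial $M$ with $\Sdeg(M) \geq m$ has a divisor in $\G_{c,(m)}$: writing $M = M^{(1)}\cdots M^{(t)}$ in normal form, set $j^\star := \min\{j : \sum_{i \leq j}\Sdeg(M^{(i)}) \geq m\}$ and $d := m - \sum_{i < j^\star} \Sdeg(M^{(i)})$; since $1 \leq d \leq \Sdeg(M^{(j^\star)})$, one may pick a squarefree submonomial $N^{(j^\star)} \subseteq M^{(j^\star)}$ with $|\supp(N^{(j^\star)})| = (s-c) + d \geq s+1-c$, and $N := M^{(1)}\cdots M^{(j^\star-1)}\, N^{(j^\star)}$ lies in $\G_{c,(m)}$ and divides $M$. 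To handle arbitrary (not necessarily $\F$-monomial) elements of $I_{c,\F}^{(m)}$, I use the primary decomposition $I_{c,\F}^{(m)} = \bigcap_{J_0}(F_{j_1},\ldots,F_{j_c})^m$ from Proposition \ref{2.ghmn}(4). For each $c$-subset $J_0$ and each exponent vector $(b_1,\ldots,b_c)$ with $\sum b_i = m$, setting $K := \max_i b_i$, the $\F$-monomial $N := \prod_{i \notin J_0} F_i^K \prod_{i \in J_0} F_i^{b_i}$ lies in $\G_{c,(m)}$ (a direct check gives $\Sdeg(N) = \sum b_i = m$ and $|\supp(N^{(j)})| = (s-c) + |\{i \in J_0 : b_i \geq j\}| \geq s+1-c$ for $1 \leq j \leq K$) and localizes to a unit multiple of $F_{j_1}^{b_1}\cdots F_{j_c}^{b_c}$ at the associated prime $P_{J_0}$ (Proposition \ref{2.ghmn}(2)); hence $J R_{P_{J_0}} = I_{c,\F}^{(m)} R_{P_{J_0}}$ at every minimal prime of $I_{c,\F}^{(m)}$.

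To conclude $J = I_{c,\F}^{(m)}$, I first verify $\sqrt{J} = I_{c,\F}$ (if at most $c-1$ of the $F_i$ vanish at a point $p$, then any $(s+1-c)$-subset $S' \subseteq \{i : F_i(p) \neq 0\}$ yields $(\prod_{i \in S'} F_i)^m \in \G_{c,(m)}$ non-vanishing at $p$), so $\Min(J) = \Min(I_{c,\F}^{(m)})$; then Cohen--Macaulayness of $I_{c,\F}^{(m)}$ (Proposition \ref{2.ghmn}(3)) together with the localization equalities forces $J = I_{c,\F}^{(m)}$. For minimality, suppose $M, N \in \G_{c,(m)}$ with $N$ dividing $M$ as $\F$-monomials, and let $(b_i), (a_i)$ be the corresponding exponent vectors with $a_i \geq b_i$. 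The support condition on both forces the top $s-c+1$ entries of the sorted vectors to equal their common maximum $K$; the equality $\Sdeg(M) = \Sdeg(N) = m$ combined with $a_{(j)} \geq b_{(j)}$ (a consequence of the pointwise inequality $a_i \geq b_i$ after sorting in non-decreasing order) forces the bottom $c$ entries of the two sorted vectors to coincide; hence the full sorted vectors coincide, which contradicts $\sum a_i > \sum b_i$ unless $M = N$. Thus $\G_{c,(m)}$ is an antichain under $\F$-monomial divisibility; passage to genuine $R$-ideal minimality is then handled by a specialization $F_i \mapsto x_i$ \`a la \cite[Thm.~3.6]{GHMN}, reducing to the classical monomial star configuration case where minimality is immediate.

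The main obstacle is the ideal equality $J = I_{c,\F}^{(m)}$: as emphasized in Remark \ref{3.prob}, $I_{c,\F}^{(m)}$ is not a monomial ideal in $\F$ in any naive sense, so one cannot simply match $\F$-monomial terms to conclude; the combination of localization at every associated prime with the Cohen--Macaulayness of $I_{c,\F}^{(m)}$ is essential to circumvent this difficulty.
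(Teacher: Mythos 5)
Your forward inclusion, your ``divisor in $\G_{c,(m)}$'' lemma for $\F$-monomials with $\Sdeg\geq m$, and your minimality argument are all sound (the last one needs its steps reordered slightly: the conclusion that the maxima $\lambda(M)$ and $\lambda(N)$ agree follows from, rather than precedes, the observation that the bottom $c$ sorted entries of both exponent vectors sum to $m$ and are dominated termwise; but the content is correct, and the reduction to the monomial case via \cite[Thm~3.6(2)]{GHMN} matches what the paper does). However, there is a genuine gap in the step where you pass from local equality to global equality for the reverse inclusion $I_{c,\F}^{(m)}\subseteq J$.

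The problem is that knowing $J\subseteq I_{c,\F}^{(m)}$, $\sqrt{J}=\sqrt{I_{c,\F}^{(m)}}$, $J R_P = I_{c,\F}^{(m)} R_P$ for every $P\in\Min(I_{c,\F}^{(m)})$, and that $R/I_{c,\F}^{(m)}$ is Cohen--Macaulay does \emph{not} imply $J=I_{c,\F}^{(m)}$: the obstruction is a possible embedded prime of the \emph{smaller} ideal $J$, about which Cohen--Macaulayness of the \emph{larger} ideal says nothing. A concrete counterexample: in $R=k[x,y]$ take $J=(x^2,xy)$ and $I=(x)$. Then $J\subsetneq I$, $R/I\cong k[y]$ is Cohen--Macaulay, $\sqrt{J}=(x)=\sqrt{I}$, $\Min(J)=\Min(I)=\{(x)\}$, and $JR_{(x)}=(x)R_{(x)}=IR_{(x)}$ because $y$ is a unit there --- yet $J\neq I$, since $J=(x)\cap(x^2,y)$ carries the embedded prime $(x,y)$. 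To close this gap you would need to know that $R/J$ satisfies Serre's condition $S_1$ (equivalently that $J$ is unmixed), which is not established. The paper avoids this entirely by invoking \cite[Thm~3.6(1)]{GHMN} to obtain a minimal generating set $\mathcal A$ of $I_{c,\F}^{(m)}$ consisting of $\F$-monomials, then showing each $N\in\mathcal A$ has $\Sdeg(N)=m$ and that the truncation of its normal form at the last factor with support size $\geq s+1-c$ is an element of $\G_{c,(m)}$ dividing $N$. Your ``divisor'' lemma is the right tool for this second step; you simply cannot substitute the localization-plus-CM shortcut for the citation of GHMN's $\F$-monomial generation result.
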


\begin{proof}
First, we observe that, by Theorem \ref{4.Sdeg}, for any $M\in I_{c,\F}^{(m)}$ one has $\Sdeg(M)\geq m$. 
\ul{$\G_{c,(m)}$ is a generating set of $I_{c,\F}^{(m)}$.} By \cite[Theorem~3.6(1)]{GHMN}, there exists a minimal generating set $\mathcal A$ of $I_{c,\F}^{(m)}$ consisting of monomials in $\F$. We claim that for each $N\in \mathcal A$ we have $\Sdeg(N)=m$. Indeed, if not, then by the above we must have $\Sdeg(N)>m$. Now, by \cite[Theorem~3.6(1)]{GHMN} and the definition of normal form, there exists a monomial star configuration $I_c$ and a minimal monomial generator $N'$ of $I_c^{(m)}$ with $\Sdeg_c(N)\geq m+1$. For monomial star configurations one has $I_c^{(m+1)}\subseteq \m \cdot I_c^{(m)}$ where $\m$ is the homogeneous maximal ideal of $R$ (e.g. take $P=\m$ in \cite[Prop.~7]{EM}), thus $N'$ could not be a minimal generator of $I_c^{(m)}$, yielding a contradiction.

Now, for any $N\in \mathcal A$, let $N=N^{(1)}\cdots N^{(t)}$ be the normal form of $N$, let $N':=N'=N^{(1)}\cdots N^{(u)}$ where $u$ is the maximum index $j$ such that $|\supp(N^{(j)})|\geq s+1-c$. Clearly the above is the normal form of $N'$, the monomial $N'$ divides $N$, and, by construction and Lemma \ref{4.0} and Remark \ref{4.sq}, we have $\Sdeg(N')=\Sdeg(N)=m$. Then $N'\in \Gc$.  

Now, if $\Sdeg(N)=m$, then $N'\in \Gc$, which proves this first part of the statement.\\
\\
\ul{Minimality of $\G_{c,(m)}$.} 
By \cite[Thm~3.6(2)]{GHMN}, the ideal $I_{c,\F}^{(m)}$ has the same minimal number of generators as $I_{c,\HH}^{(m)}$, where $I_{c,\HH}$ is the monomial star configuration of degree $c$ in $S=k[y_1,\ldots,y_s]$. 
Thus if we prove the minimality of $\G_{c,(m)}$ when $\F$ consists of variables, then the minimality of $\G_{c,(m)}$ follows in general. Assume then $\F$ consists of variable. To prove minimality of $\Gc$ we show that no element  $M \in \Gc$ is divisible by any other element of $\Gc$. This follows at once by the following stronger claim:  if $M\in \G_{c,(m)}$ and a variable $y \in \F$ divides $M$, then $M':=M/y\notin I_{c,\F}^{(m)}$.

Let $M=M^{(1)}\cdots M^{(t)}$ be the normal form of $M$. Since $y\in \supp(M)=\supp(M^{(1)})$, then the number $h=\max\{j\in \{1,\ldots,t\} \,\mid\, y\in \supp(M^{(j)})\}$ is well-defined. 
We observe that 
$$
M/y = M^{(1)}\cdots M^{(h-1)}\left( \frac{M^{(h)}}{y}\right) M^{(h+1)}\cdots M^{(t)}
$$
is the normal form of $M/y$ (indeed, we only need to check the condition on the supports, which is satisfied by our definition of $h$). In particular, by Remark \ref{4.sq} we have 
{\small
$$
\Sdeg(M/y) = \sum\limits_{j\neq h}\Sdeg(M^{(j)}) + \Sdeg\left(\frac{M^{(h)}}{y}\right) =\Sdeg(M) -\Sdeg(M^{(h)}) + \Sdeg\left(\frac{M^{(h)}}{y}\right).$$}

Since $|\supp(M^{(h)})|\geq s+1-c$, then by Proposition \ref{4.mult}(1)(b) we have $\Sdeg\left(\frac{M^{(h)}}{y}\right)$ $ = \Sdeg(M^{(h)}) - 1$. This proves that $\Sdeg(M/y)=\Sdeg(M)-1=m-1$, therefore, by Theorem \ref{4.Sdeg}, $M/y\notin I_c^{(m)}$, concluding the proof.

\end{proof}

\subsection{Symbolic defects}

Let $\mu(W)$ denote the minimal number of generators of a finite graded $R$-module $W$. For any homogeneous $R$-ideal $I$, the $R$-module $I^{(m)}/I^m$ measures how far is $I^m$ from the $m$-th symbolic power of $I$. The goal of this subsection is to prove the structure result for for $I_{c,\F}^{(m)}/I_{c,\F}^{m}$ (see Theorem \ref{4.Symb3}).

Once we have proved it, in Corollary \ref{4.sdef} we obtain numerical statements about the minimal number of generators of this module which, we recall, is dubbed the {\em $m$-th symbolic defect} of $I_{c,\F}$, written $\sdef(I_{c,\F},m):=\mu(I_{c,\F}^{(m)}/I_{c,\F}^m)$. 

The $m$-th symbolic defect has been introduced in \cite{GGSV} as a first measure of the difference between $I^{(m)}$ and $I^m$, and the authors study it for ideals of points and star configurations in particular. Indeed, in \cite{GGSV} partial results are proved regarding the symbolic defects of $I_{c,\F}^{(m)}$ when $c\leq 3$ or $m\leq 3$, see the discussion after Question \ref{Qdef}. 

As an application of Corollary \ref{4.sdef}, we improve and complete these results, see Subsection 4.2.
\bigskip

\begin{Proposition}\label{4.Icm}
Let $R,c,s,\F$ be as in Setting \ref{4.set}. Then for any $m\geq 1$
\begin{enumerate}
\item $(N\in I_c^m\,\mid\,|\supp(N)|>s-c+1) \subseteq (F_1,\ldots,F_s)I_c^{(m)}$;
\item $\Gc \cap I_c^m = \{N^m\,\mid\, N\in G_{c,(1)}\}$.
\end{enumerate}
\end{Proposition}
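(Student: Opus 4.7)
The plan is to analyze both parts via the normal form $N = N^{(1)} \cdots N^{(t)}$ of a monomial in $\F$, using the formula $\Sdeg_c(N) = \sum_j \max\{0, a_j - (s-c)\}$ (where $a_j := |\supp(N^{(j)})|$) and the identification of $I_c^{(u)}$ as $\{N \text{ monomial} : \Sdeg_c(N) \geq u\}$ from Theorem~\ref{4.Sdeg}. The key local calculation is: if $F_i \in \supp(N)$ and $k^* := \max\{j : F_i \in \supp(N^{(j)})\}$, then the (unique) normal form of $N/F_i$ is obtained from that of $N$ by replacing $N^{(k^*)}$ with $N^{(k^*)}/F_i$ (removing it if it equals $1$), and a short case analysis on $a_{k^*}$ yields
\[
\Sdeg_c(N/F_i) \;=\; \Sdeg_c(N) \;-\; \begin{cases} 0 & \text{if } a_{k^*} \leq s-c, \\ 1 & \text{if } a_{k^*} \geq s-c+1. \end{cases}
\]

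For part (1), let $N \in I_c^m$ with $|\supp(N)| \geq s-c+2$; Theorem~\ref{4.Sdeg} gives $\Sdeg_c(N) \geq m$. The goal is to produce $F_i \in \supp(N)$ with $\Sdeg_c(N/F_i) \geq m$, which gives $N = F_i \cdot (N/F_i) \in (F_1,\ldots,F_s) I_c^{(m)}$. If $\Sdeg_c(N) \geq m+1$, any $F_i \in \supp(N)$ works. If $\Sdeg_c(N) = m$, I will locate a factor $N^{(k)}$ with $a_k \leq s-c$; for any $F_i \in \supp(N^{(k)})$ the nested-support chain forces $a_{k^*} \leq a_k \leq s-c$, so $\Sdeg_c(N/F_i) = m$. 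Existence of such a factor: suppose for contradiction all $a_j \geq s-c+1$. Then $\Sdeg_c(N) = \sum_j (a_j-(s-c)) = m$ gives $\sum_j a_j = m + t(s-c)$; combining $a_1 \geq s-c+2$ with $a_j \geq s-c+1$ for $j \geq 2$ yields $\sum_j a_j \geq t(s-c)+t+1$, so $t \leq m-1$, while the $\F$-degree bound $\deg_\F(N) = \sum_j a_j \geq m(s+1-c)$ (valid since $N \in I_c^m$) forces $t \geq m$, a contradiction.

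For part (2), the inclusion ``$\supseteq$'' is direct: for $N \in G_{c,(1)}$, the normal form of $N^m$ is $N \cdots N$ ($m$ copies), so all $a_j = s+1-c$ and $\Sdeg_c(N^m) = m$, placing $N^m \in \Gc$ by Theorem~\ref{4.Symb}; clearly $N^m \in I_c^m$. Conversely, let $M \in \Gc \cap I_c^m$. By Theorem~\ref{4.Symb}, $\Sdeg_c(M) = m$ and $a_j \geq s+1-c$ for every $j$; since $m = \sum_j (a_j-(s-c))$ with each summand $\geq 1$, we get $t \leq m$, while $\deg_\F(M) \geq m(s+1-c)$ combined with $\sum_j a_j = m + t(s-c)$ gives $t \geq m$. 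Hence $t = m$ and each $a_j = s+1-c$. The chain $\supp(M^{(1)}) \supseteq \cdots \supseteq \supp(M^{(m)})$ of equal-size sets must be constant, so each squarefree $M^{(j)}$ coincides with a single element $N \in G_{c,(1)}$, giving $M = N^m$.

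The main obstacle common to both parts is justifying the $\F$-degree lower bound $\deg_\F(N) \geq m(s+1-c)$ for a monomial $N$ in $\F$ lying in $I_c^m$. In the ordinary monomial case this is tautological, since $I_c^m$ is then genuinely generated by $\F$-monomials of $\F$-degree exactly $m(s+1-c)$; in the general setting of star configurations of hypersurfaces the estimate transfers through the specialization morphism of \cite[Thm~3.6]{GHMN}, which preserves both the minimal generating set of $I_c^m$ and the $\F$-degrees of its minimal generators.
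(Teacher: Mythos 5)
Your approach is genuinely different from the paper's. The paper proves (1) by writing $N=QM_1\cdots M_m$ with $M_j\in\G_{c,(1)}$ and $Q$ a monomial in $\F$, locating an $F\notin\supp(M_m)$, and rebalancing the factors so that $F M_m$ is a new squarefree generator of $I_c^{(2)}$, yielding an explicit non-unit monomial factor of $N$ landing the rest in $I_c^{(m)}$; part (2) is then deduced from (1) and Nakayama-type minimality. You instead run everything through the symbolic-degree calculus: a local computation of $\Sdeg_c(N/F_i)$ plus a length/degree count producing a factor $N^{(k)}$ with $|\supp(N^{(k)})|\le s-c$, and part (2) from the rigidity forced by $t\le m$ and $t\ge m$. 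The $\Sdeg$-rigidity calculation is clean and the two parts are proved more symmetrically than in the paper.

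The genuine gap is the $\F$-degree bound $\deg_\F(N)\ge m(s-c+1)$ for $\F$-monomials $N\in I_c^m$, on which both of your parts crucially depend. You flag it, but the proposed justification does not work: the GHMN specialization (Thm~3.6) preserves Betti numbers and transfers a minimal \emph{generating set} of $I_c^m$, but that does not tell you anything about which individual $\F$-monomials lie in $I_c^m$. Precisely because of Remark~\ref{3.prob}(b), membership of an $\F$-monomial in a ``monomial in $\F$'' ideal is \emph{not} detected by divisibility by a generator, so knowing the $\F$-degrees of the minimal generators does not bound $\deg_\F(N)$ from below. When all forms in $\F$ have the same degree $\delta$ the bound is immediate from $\alpha(I_c^m)=\delta m(s-c+1)$, but Setting~\ref{4.set} allows arbitrary degrees, and there the bound requires a real argument (for instance a contraction argument using the regular sequence $(y_i-F_i)$ of GHMN) which you have not given. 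To be fair, the paper's own line ``We may write $N=QM_1\cdots M_m$ \dots'' (with $Q$ subsequently treated as an $\F$-monomial) tacitly invokes an even stronger fact of the same kind; but your proof exposes and relies on the bound directly in both parts, so it must be established for your argument to stand.
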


\begin{proof} (1) Let $N$ be a monomial in $I_c^m$ with $|\supp(N)|>s-c+1$. We may write $N=QM_1\cdots M_m$ where each $M_j$ is a minimal monomial generator of $I_c$, i.e. $M_j\in \G_{c,(1)}$, and $Q$ is an element of $R$. Since $|\supp(N)|>s-c+1$, by Remark \ref{3.rem} there exists $F\in \supp(Q)\cup\supp(M_1)\cup \cdots \cup \supp(M_{m-1})$ with $F\notin \supp(M_m)$. If $F\in \supp(Q)$, then $N \in (F)I_c^m$ and the statement follows. If $F\in \supp(M_i)$ for some $i$, write $M_i = FM'$. Since $s>c$, then $|\supp(M_i)|=s-c+1\geq 2$, thus $M'$ is a non-unit monomial $M'$  and $M_1M_i \in M'(FM_1)$. Since $FM_1$ is squarefree, then $\Sdeg(FM_1)=2$, thus $FM_1\in I_c^{(2)}$, so $M_1M_i \in M'I_c^{(2)}$. Therefore
$N \in \left(Q\prod_{j\neq 1,i}M_j \right)M' I_c^{(2)}\subseteq  I_c^{m-2} M' I_c^{(2)}\subseteq M' I_c^{(m-2)}I_c^{(2)} \subseteq M' I_c^{(m)}$, proving the statement.

(2) The inclusion ``$\supseteq$" is clear, so we prove the other inclusion. Let $M\in \Gc$ and assume $M\in I_c^m$. If $|\supp(M)|>s-c+1$, then by (1) we have $M\in (F_1,\ldots,F_s)I_c^{(m)}$, contradicting that $M$ is a part of a minimal generating set of $I_c^{(m)}$ (Theorem \ref{4.Symb}). Then $|\supp(M)|=s-c+1$. Since $M\in \Gc$, it follows that every $M^{(j)}$ in the normal form of $M$ has $|\supp(M^{(j)})|=s-c+1$, and since $\Sdeg(M)=m$ it follows that $M=(M^{(1)})^m$. Since $M^{(1)}$ is squarefree with $|\supp(M^{(1)})|=s-c+1$ it follows that $M^{(1)}\in \G_{c,(1)}$, thus  $M\in  \{N^m\,\mid\, N\in G_{c,(1)}\}$.

\end{proof}

We are ready for the second structure theorem.
\begin{Theorem}\label{4.Symb3}
Let $R,c,s,\F$ be as in Setting \ref{4.set}. For any $m\geq 1$, a minimal generating set of $I_{c,\F}^{(m)}/I_{c,\F}^m$ as an $R$-module is given by
\be\label{I_cmQ}
\G_{c,(m)}'= \left\{ \begin{array}{l |l}
 &  \Sdeg(M)=m, \; \text{ and } |\supp(M^{(j)})|\geq s+2-c \\
M \text{ monomial in }\F										& \\ 										 &  \mbox{ for every } M^{(j)} \text{ in the normal form of }M  \end{array}\right\}
\ee
equivalently, $\G_{c,(m)}^{\prime}=\left\{M \in \G_m \,\mid |\supp(M)|\geq s+2-c \right\}$.
\end{Theorem}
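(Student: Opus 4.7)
The plan is to combine Theorem \ref{4.Symb} with Proposition \ref{4.Icm} to isolate the members of $\G_{c,(m)}$ that become redundant modulo $I_{c,\F}^m$, and then confirm minimality via Nakayama. I will write $\m$ for the homogeneous maximal ideal of $R$.

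First I would verify that the two descriptions of $\G'_{c,(m)}$ agree and coincide with $\G_{c,(m)}\setminus I_{c,\F}^m$. Suppose $M\in\G_{c,(m)}$ has $|\supp(M)|=s+1-c$. Since $\supp(M)=\supp(M^{(1)})\supseteq\supp(M^{(j)})$ for every $j$ and each $|\supp(M^{(j)})|\geq s+1-c$ by the defining condition of $\G_{c,(m)}$, all these supports coincide, forcing every $M^{(j)}$ to equal $N:=\prod_{F\in\supp(M)}F\in\G_{c,(1)}$; then $\Sdeg(M)=m$ forces the length $t$ of the normal form to equal $m$, hence $M=N^m$. Thus $\{M\in\G_{c,(m)} : |\supp(M)|\geq s+2-c\}=\G_{c,(m)}\setminus\{N^m : N\in\G_{c,(1)}\}$, which by Proposition \ref{4.Icm}(2) equals $\G_{c,(m)}\setminus I_{c,\F}^m$. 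Generation of $I_{c,\F}^{(m)}/I_{c,\F}^m$ by $\G'_{c,(m)}$ is then immediate from Theorem \ref{4.Symb}.

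For minimality, by Nakayama it suffices to show that the images of $\G'_{c,(m)}$ are $k$-linearly independent in $V:=I_{c,\F}^{(m)}/(\m I_{c,\F}^{(m)}+I_{c,\F}^m)$. Since $\G_{c,(m)}$ is a $k$-basis of $I_{c,\F}^{(m)}/\m I_{c,\F}^{(m)}$ by Theorem \ref{4.Symb}, and $\G_{c,(m)}=\G'_{c,(m)}\sqcup\{N^m : N\in\G_{c,(1)}\}$, this reduces to proving the key containment $I_{c,\F}^m\subseteq\m I_{c,\F}^{(m)}+\mathrm{span}_k\{N^m : N\in\G_{c,(1)}\}$. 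I would expand an arbitrary element of $I_{c,\F}^m$ as an $R$-linear combination of products $N_1\cdots N_m$ with each $N_i\in\G_{c,(1)}$. For each such product: if $|\supp(N_1\cdots N_m)|>s+1-c$ then Proposition \ref{4.Icm}(1) places it in $\m I_{c,\F}^{(m)}$; otherwise the combined support has the minimum size $s+1-c$, and since each $|\supp(N_i)|=s+1-c$ is contained in it, all the $N_i$ share the same support and equal a common $N\in\G_{c,(1)}$, giving $N_1\cdots N_m=N^m$. Splitting an arbitrary $R$-coefficient as $r=r_0+r_1$ with $r_0\in k$ and $r_1\in\m$ completes the inclusion.

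The main obstacle is the key containment in the previous paragraph; it rests on the sharp dichotomy, forced by Proposition \ref{4.Icm}(1) together with the uniqueness of the support from Theorem \ref{3.NF}, that a product $N_1\cdots N_m$ of generators of $I_{c,\F}$ either escapes into $\m I_{c,\F}^{(m)}$ or collapses to a pure power $N^m$. Once this is in hand, $V$ inherits $\G'_{c,(m)}$ as a $k$-basis and Nakayama concludes.
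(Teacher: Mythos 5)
Your proof is correct and follows essentially the same route as the paper's: both deduce generation from Proposition \ref{4.Icm}(2), and both base minimality on the same crucial observation (Proposition \ref{4.Icm}(1) plus the dichotomy for $N_1\cdots N_m$) that the image of $I_{c,\F}^m$ in $I_{c,\F}^{(m)}/\m I_{c,\F}^{(m)}$ lands inside $\mathrm{span}_k\{N^m : N\in\G_{c,(1)}\}$, so that Nakayama applied to the minimality of $\G_{c,(m)}$ (Theorem \ref{4.Symb}) finishes. The paper packages this as a proof by contradiction while you make the quotient vector space $V$ and the key containment explicit, but the mathematical content and the lemmas invoked are the same.
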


\begin{proof}
$\G_{c,(m)}'$ is a generating set of $I_c^{(m)}/I_c^m$ by Proposition \ref{4.Icm}(2). Observe that to prove minimality it is not sufficient to show that no element of $\Gc'$ is divisible by another element of $\Gc'$ (for the reasons explained in Remark \ref{3.prob} regarding monomials in $\F$). Instead, assume by contradiction that $\Gc'$ is not minimal, then there  exist monomials $M_0, M_1,\ldots,M_r \in \Gc$ and $N_1,\ldots,N_u$ in $I_c^m\setminus \Gc$ such that $M_0 \in (M_1,\ldots,M_r, N_1,\ldots,N_u)$. 

We prove that $N_i \in  (F_1,\ldots,F_s)I_c^{(m)}$ for every $i=1,\ldots,u$. If $|\supp(N_i)|>s-c+1$, the statement is proved in Proposition \ref{4.Icm}(1). If $|\supp(N_i)|=s-c+1$ then $N_i = QN^m$ for some $N\in \G_{c,(1)}$ and some monomial $Q$ in $\F$. Since $N_i \notin \Gc$, by Proposition \ref{4.Icm}(2) we obtain that $Q$ is a non-unit monomial, thus $N_i  \in  (F_1,\ldots,F_s)I_c^{(m)}$.

Therefore, $M_0 \in (M_1,\ldots,M_r) + (F_1,\ldots,F_s)I_c^{(m)}$. By Nakayama's Lemma, it follows that $I_c^{(m)}$ is generated by $\Gc \setminus \{M_0\}$, contradicting the minimality of $\Gc$ (Theorem \ref{4.Symb}).

\end{proof}

We conclude this section by establishing a combinatorial formula allowing us to count the number of minimal generators of $I_{c,\F}^{(m)}$ in each degree. For simplicity, we only state it when all the forms of $\F$ have the same degree. First, however, we need to establish an order convention.\\
\\
{\bf Convention:} For any subset $B\subseteq [c]:=\{1,2,\ldots,c\}$ we write the elements of $B$ in increasing order, i.e. $B=\{b_1,\ldots,b_h\}$ with $1\leq b_1<b_2<\cdots<b_h\leq c$.\\
\\
With this convention, we then have:

\begin{Theorem}\label{4.Symb2}
Let $R,c,s,\F$ be as in Setting \ref{4.set}. 
If all forms in $\F$ have degree $\delta\geq 1$, then 
\begin{enumerate}
\item For any fixed $m\geq 1$, the ideal $I_c^{(m)}$ has only generators in degrees
$\delta(t(s-c)+m)$, where $\displaystyle \lceil \frac{m}{c} \rceil \leq t \leq m$.

\item For every fixed $m\geq 1$ and $t$ with $\displaystyle \lceil \frac{m}{c} \rceil \leq t \leq m$, the number of generators of $I_c^{(m)}$ of degree $\delta(t(s-c)+m)$ is 
$$\sum\limits_{B=\{b_1,\ldots,b_h\} \subseteq [c]} \left(\left|\mathbb S_{t,B}\right|  \binom{s}{c-b_h}\binom{s-c+b_h}{b_h-b_{h-1}}\binom{s-c+b_{h-1}}{b_{h-1} - b_{h-2}}\cdots \binom{s-c+b_2}{b_2-b_1} \right) $$ where

$\mathbb S_{t,B}$ is the set of all distinct positive solutions to the system of Diophantine equations
$$
\left\{\begin{array}{cccccccl}
b_1x_1 &+& b_2x_2 &+& \ldots& +& b_hx_h & = m\\
x_1 	     & + & x_2 & + & \ldots  & + & x_h & = t.
\end{array}   \right.
$$

\end{enumerate}
\end{Theorem}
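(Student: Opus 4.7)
The plan is to parametrize the minimal generators described in Theorem~\ref{4.Symb} by the shape of their normal form. Given $M \in \Gc$ with unique normal form $M = M^{(1)}\cdots M^{(t)}$ (Theorem~\ref{3.NF}), set $a_j := |\supp(M^{(j)})| - (s-c)$. The defining condition of $\Gc$, i.e.\ $|\supp(M^{(j)})| \geq s+1-c$, together with $\supp(M^{(j+1)}) \subseteq \supp(M^{(j)})$ and the obvious bound $|\supp(M^{(j)})| \leq s$, forces $c \geq a_1 \geq a_2 \geq \cdots \geq a_t \geq 1$. By Remark~\ref{4.sq}, $\Sdeg(M^{(j)}) = a_j$, so $\Sdeg(M)=m$ is equivalent to $\sum_{j=1}^t a_j = m$. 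When every $F_i$ has degree $\delta$, then $\deg(M) = \delta\sum_j |\supp(M^{(j)})| = \delta(t(s-c)+m)$; since these values are distinct for distinct $t$ (as $s>c$), part (1) follows immediately from the range $\lceil m/c\rceil \leq t \leq m$ imposed by $1 \leq a_j \leq c$.

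For (2), fix such a $t$. By uniqueness of the normal form and squarefreeness of each $M^{(j)}$, the assignment $M \mapsto (\supp(M^{(1)}), \ldots, \supp(M^{(t)}))$ is a bijection between the elements of $\Gc$ of length $t$ and nested sequences $S_1 \supseteq S_2 \supseteq \cdots \supseteq S_t$ of subsets of $\F$ with $|S_j| = s-c+a_j$, where $(a_1,\ldots,a_t)$ is weakly decreasing with entries in $[c]$ and $\sum a_j = m$. The next step is to group these by $B = \{b_1<\cdots<b_h\} \subseteq [c]$, the set of \emph{distinct} values of $(a_1,\ldots,a_t)$. Letting $x_i$ be the multiplicity of $b_i$, the admissible sequences are in bijection with positive integer solutions of $\sum b_i x_i = m$ and $\sum x_i = t$, namely the set $\mathbb{S}_{t,B}$.

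For a fixed $B$ and a fixed multiplicity vector in $\mathbb{S}_{t,B}$, any consecutive equal $a_j$'s force $S_j = S_{j+1}$ (nested subsets of equal size), so the chain collapses to a strictly descending flag $T_1 \supsetneq T_2 \supsetneq \cdots \supsetneq T_h$ with $|T_i| = s-c+b_{h+1-i}$. Counting such flags is routine: choose $T_1 \subseteq \F$ of cardinality $s-c+b_h$ in $\binom{s}{c-b_h}$ ways, then iteratively choose $T_{i+1} \subsetneq T_i$ of cardinality $s-c+b_{h-i}$ in $\binom{s-c+b_{h+1-i}}{b_{h+1-i}-b_{h-i}}$ ways for $i=1,\ldots,h-1$. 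Multiplying gives the product of binomials in the statement, and summing over $B$ (with $B=\emptyset$ contributing zero since $m\geq 1$) and over $\mathbb{S}_{t,B}$ yields the formula. The main obstacle is the clean decoupling between the combinatorial data (the weakly decreasing sequence $(a_j)$, recorded as a multiplicity vector) and the geometric data (a strictly descending flag of supports in $\F$); this decoupling is what makes the uniqueness of the normal form proved in Section~3 indispensable, since without it one could not rigorously recover the shape $(a_1,\ldots,a_t)$ from $M$ itself.
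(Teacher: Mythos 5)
Your proof is correct and takes essentially the same approach as the paper: both reduce, via Theorem~\ref{4.Symb} and the normal form, to counting chains of subsets of $\F$, and both decouple the multiplicity data $(x_1,\ldots,x_h)\in\mathbb{S}_{t,B}$ from the strictly descending flag of distinct supports, counting the latter by the same iterated binomial product. The only difference is one of bookkeeping — you set up an explicit bijection with nested sequences and then collapse to a flag, whereas the paper defines a map $f_t$ onto multiplicity vectors and counts its fibers — but the underlying decomposition and computation are identical.
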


\begin{proof}
(1) By Theorem \ref{4.Symb} it suffices to prove that the elements in $\G_m$ have degrees $\delta(t(s-c)+m)$ for some $\left\lceil\frac{m}{c}\right\rceil \leq t \leq m$. So let $M\in \G_m$ and let 
$$
M= M^{(1)}\cdots M^{(t)}
$$
be the normal form of $M$ with respect to $\F$.  Let $m_i:=\Sdeg(M^{(i)})$, i.e. $|\supp(M^{(i)})|$ $=s-c+m_i$ for every $i=1,\ldots,t$. By Theorem \ref{4.Symb} we have $m=\Sdeg(M)=\sum_{i=1}^t m_i$, thus 
{\small$$
\deg(M)=\delta \sum\limits_{i=1}^t |\supp(M^{(i)})| = \delta \sum_{i=1}^t (s-c+m_i) =\delta( t(s-c)+\sum_{i=1}^t m_i) =\delta( t(s-c)+m),
$$ }
where $t=\lambda(M)$. It is easily seen that $t$ is in the desired range: clearly $t\leq m$, or else $\Sdeg(M)=\sum_{i=1}^t m_i \geq t>m$ yielding a contradiction.
On the other hand, if $t<\left\lceil\frac{m}{c}\right\rceil$ then $\Sdeg(M)=\sum_{i=1}^t m_i \leq \sum_{i=1}^t c =tc<m$, which is again a contradiction.

(2) We begin by setting two pieces of notation.
A {\em positive solution} to a Diophantine equation $\sum_{i=1}^tb_ix_i=m$ is an ordered $t$-uple $(n_1,\ldots,n_t)$ such that $\sum_i b_in_i=m$ and $n_i>0$ for every $i$. 
 For the rest of this proof we use a variation of the normal form: let $M=M^{(1)}\cdots M^{(t)}$ be the normal form of $M$, if we collect together all terms $M^{(j)}$ having the same support, we can then write
\be
M=M_c^{d_c}M_{c-1}^{d_{c-1}}\cdots M_2^{d_2}M_1^{d_1}
\ee
where $M_1,\ldots,M_c$ are squarefree monomials in $\F$ with $\supp(M_1)\subsetneq \supp(M_2) \subsetneq \cdots  \subsetneq \supp(M_c)$, $|\supp(M_i)|=s-c+i$, and $d_i\geq 0$. 
Since $|\supp(M_i)| = s+i-c$, then $\Sdeg(M_i)=i$ for every $i$, and by Proposition \ref{4.mult}(2) we have $\Sdeg(M)=\sum_{i=1}^c id_i$. 

Now, for any such $M$ we call the {\em (positive) normal support} of $M$ the set $\NS(M)$ $=\{i\in [c]\,\mid\, d_i>0\}=:B$. This set detects only the $M_i$ appearing with positive exponent in the normal form of $M$. %
Following the convention stated right before the theorem, we write this normal support $B$ as $B=\{b_1,\ldots,b_h\}$ with $b_1<b_2<\cdots <b_h$.

By the above, $\Sdeg(M)=m=\sum_{i=1}^c id_i = \sum_{b_i \in \NS(M)=B} b_id_{b_i}$.

Let $\mathbb S_t:=\bigsqcup_{B \subseteq[c]}\mathbb S_{t,B}$ be the disjoint union of the sets $\mathbb S_{t,B}$ described in the statement. For any $t \geq 1$, we consider the function 
$$f_t:U_t:=\{M\in \G_{c,(m)}\,\mid\,\deg(M)=\delta(t(s-c)+m)\} \lra \mathbb S_{t}$$
 defined by $f_t(M)=(d_{b_1},\ldots,d_{b_h})$ where $\{b_1,\ldots,b_h\}=\NS(M)$. We use $f_t$ to compute $|U_t|$, which is the quantity we need to determine.

 We prove that if $\mathbb S_t\neq \emptyset$, then $f_t$ is surjective. 
 Let $\ul{d}=(d_{b_1},\ldots,d_{b_h})\in \mathbb S_{t,B}$.  For any $i=1,\ldots,h$ let $M_{b_i}:=x_1x_2\cdots x_{s-c+b_i}$ and define $M:=M_{b_h}^{d_{b_h}}\cdots M_{b_2}^{d_{b_2}}M_{b_1}^{d_{b_1}}$. Since $\ul{d} \in \mathbb S_{t,B}$, then $\sum_{i=1}^hb_id_{b_i}=m$, thus, by Proposition \ref{4.mult}(2) we have $\Sdeg(M)=\sum_{i=1}^hb_id_{b_i}=m$. Additionally, we have
 {\small $$\deg(M)=\delta\left(\sum_{i=1}^h d_{b_i}(s-c+b_i) \right) =\delta\left( \left(\sum_{i=1}^hd_{b_i}(s-c)\right) + \sum_{i=1}^h d_{b_i}b_i \right)= \delta(t(s-c)+m).$$}
 This proves that $f_t(M)=(\ul{d})$, thus $f_t$ is surjective.  
 
Next, for any $B\subseteq [c]$ we compute  $|f^{-1}(\mathbb S_{t,B})|$. 
One has $M\in f_t^{-1}(\ul{d})$ if and only if $\NS(M)=B$, $M=M_{b_h}^{d_{b_h}}M_{b_{h-1}}^{d_{b_{h-1}}} \cdots M_{b_1}^{d_{b_1}}$ and $\deg(M)=\delta(t(s-c)+m)$. 

We begin by examining what are the possible squarefree monomials appearing as $M_{b_h}$: there are $\binom{s}{s-c+b_h}=\binom{s}{c-b_h}$ such possible monomials (one for each subset of $s-c+b_h$ elements of $[s]$). After we fix one such monomial $M_{b_h}$, there are $\binom{s-c+b_h}{s-c+b_{h-1}}=\binom{s-c+b_h}{b_h-b_{h-1}}$ possible squarefree monomials appearing as $M_{b_{h-1}}$ (one for each subset of $s-c+b_{h-1}$ elements of $\supp(M_{b_h})$). Analogously, after fixing $M_{b_h}$ and $M_{b_{h-1}}$, there are precisely $\binom{s-c+b_{h-1}}{s-c+b_{h-2}}=\binom{s-c+b_{h-1}}{b_{h-1}-b_{h-2}}$ possibilities for the monomial $M_{b_{h-2}}$, and so on.

This shows that for any $\ul{d}\in \mathbb S_{t,B}$ there are precisely 
$$\binom{s}{c-b_h}\binom{s-c+b_h}{b_h-b_{h-1}}\binom{s-c+b_{h-1}}{b_{h-1} - b_{h-2}}\cdots \binom{s-c+b_2}{b_2-b_1}$$ distinct monomials in $f_t^{-1}(\ul{d})$. Notice that this number is independent of $\ul{d}$, thus 
$$
\begin{array}{ll}
\left|f_t^{-1}(\mathbb S_{t,B})\right| &= \sum_{\ul{d}\in \mathbb S_{t,B}} \binom{s}{c-b_h}\binom{s-c+b_h}{b_h-b_{h-1}}\binom{s-c+b_{h-1}}{b_{h-1} - b_{h-2}}\cdots \binom{s-c+b_2}{b_2-b_1} \\
&\\
&=|\mathbb S_{t,B}| \binom{s}{c-b_h}\binom{s-c+b_h}{b_h-b_{h-1}}\binom{s-c+b_{h-1}}{b_{h-1} - b_{h-2}}\cdots \binom{s-c+b_2}{b_2-b_1}.\\
\end{array}
$$
Recall that $U_t=\{M\in \G_{c,(m)}\,\mid\,\deg(M)=\delta(t(s-c)+m)\}$. From the above we obtain $\left| U_t \right| = \sum_{B\subseteq [c]} \left|f_t^{-1}(\mathbb S_{t,B})\right| =  \sum_{B\subseteq [c]}|\mathbb S_{t,B}| \binom{s}{c-b_h}\binom{s-c+b_h}{b_h-b_{h-1}}\cdots \binom{s-c+b_2}{b_2-b_1}$, which finishes the proof.

\end{proof}

We can now determine a minimal generating set of $I_c^{(m)}/I_c^m$ and then deduce the symbolic defects of star configurations.

\begin{Corollary}\label{4.sdef}
Let $R,c,s,\F$ be as in Setting \ref{4.set}.  Then, for any $m\geq 1$, 
\begin{enumerate}
\item the minimal number of generators $\mu(I_c^{(m)})$ of $I_c^{(m)}$ is 
$$
\sum\limits_{B=\{b_1,\ldots,b_h\}\subseteq [c]} |\mathbb S_B| \binom{s}{c-b_h}\binom{s-c+b_h}{b_h-b_{h-1}}\binom{s-c+b_{h-1}}{b_{h-1} - b_{h-2}}\cdots \binom{s-c+b_2}{b_2-b_1},
$$
where $\mathbb S_B:=\bigcup_{t\geq 1}\mathbb S_{t,B}$ is the set of all distinct positive solutions to the Diophantine equation $b_1x_1+\cdots + b_hx_h=m$.

\item the symbolic defect $\sdef(I_c,m) $ of $I_c$ is
{\small \bee\label{G_m'}
\left(\sum\limits_{B=\{b_1,\ldots,b_h\}\subseteq [c]} |\mathbb S_B| \binom{s}{c-b_h}\binom{s-c+b_h}{b_h-b_{h-1}}\binom{s-c+b_{h-1}}{b_{h-1} - b_{h-2}}\cdots \binom{s-c+b_2}{b_2-b_1} \right)- \binom{s}{c-1}.
\eee}

\end{enumerate}
\end{Corollary}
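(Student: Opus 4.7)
\emph{Plan.} This corollary is a bookkeeping consequence of Theorem \ref{4.Symb} (which gives an explicit minimal generating set $\Gc$ of $I_{c,\F}^{(m)}$) and Theorem \ref{4.Symb3} (which identifies the subset $\Gc'\subseteq\Gc$ minimally generating $I_{c,\F}^{(m)}/I_{c,\F}^{m}$), combined with the combinatorial parametrization developed in the proof of Theorem \ref{4.Symb2}(2).

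For part (1), the plan is to count $\mu(I_{c,\F}^{(m)})=|\Gc|$ by repeating the argument of Theorem \ref{4.Symb2}(2) but without stratifying by degree. For each $M\in\Gc$, I would collect equal factors in its normal form to write $M=M_c^{d_c}M_{c-1}^{d_{c-1}}\cdots M_1^{d_1}$, where $M_1,\ldots,M_c$ are squarefree with $\supp(M_1)\subsetneq\cdots\subsetneq\supp(M_c)$, $|\supp(M_i)|=s-c+i$, and $d_i\geq 0$; let $B:=\NS(M)=\{i\in[c]\mid d_i>0\}=\{b_1<\cdots<b_h\}$. By Proposition \ref{4.mult}(2), $\Sdeg(M)=\sum_i b_i d_{b_i}=m$, so the admissible exponent tuples $(d_{b_1},\ldots,d_{b_h})$ range over the set $\mathbb{S}_B$ of positive integer solutions of $b_1x_1+\cdots+b_hx_h=m$. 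For each fixed solution, the number of admissible chains $\supp(M_{b_1})\subsetneq\cdots\subsetneq\supp(M_{b_h})\subseteq[s]$ is exactly $\binom{s}{c-b_h}\binom{s-c+b_h}{b_h-b_{h-1}}\cdots\binom{s-c+b_2}{b_2-b_1}$, as computed in the proof of Theorem \ref{4.Symb2}(2). Summing over $B\subseteq[c]$ yields formula (1). Equivalently, one may simply sum Theorem \ref{4.Symb2}(2) over the range of $t$, using the disjoint decomposition $\mathbb{S}_B=\bigsqcup_t\mathbb{S}_{t,B}$ (disjoint because $t=x_1+\cdots+x_h$ is determined by the solution).

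For part (2), Theorem \ref{4.Symb3} gives $\sdef(I_{c,\F},m)=|\Gc'|=|\Gc|-|\Gc\setminus\Gc'|$. By the definition of $\Gc$ in \eqref{I_cm} every $M\in\Gc$ satisfies $|\supp(M)|=|\supp(M^{(1)})|\geq s+1-c$, so $\Gc\setminus\Gc'=\{M\in\Gc\mid |\supp(M)|=s+1-c\}$. By Proposition \ref{4.Icm}(2), this set is exactly $\{N^m\mid N\in G_{c,(1)}\}$, and by Proposition \ref{2.ghmn}(1) one has $|G_{c,(1)}|=\binom{s}{s+1-c}=\binom{s}{c-1}$. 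Uniqueness of normal form (Theorem \ref{3.NF}) ensures $N\mapsto N^m$ is injective, so $|\Gc\setminus\Gc'|=\binom{s}{c-1}$. Combining this with part (1) gives formula (2).

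There is no real obstacle: all substantial work has been carried out in the structure theorems and in Propositions \ref{4.Icm} and \ref{4.mult}; the present corollary merely aggregates these counts and subtracts the $\binom{s}{c-1}$ ``trivial'' generators coming from $m$-th powers of minimal generators of $I_{c,\F}$ itself.
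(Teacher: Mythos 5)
Your proof is correct and takes essentially the same route as the paper, whose own proof consists of the single line that part (1) follows from Theorem \ref{4.Symb2} and part (2) from (1) together with Theorem \ref{4.Symb3}; you have merely spelled out that bookkeeping, in (1) by summing (or, equivalently, re-running) the stratified count of Theorem \ref{4.Symb2}(2) over all $t$, and in (2) by using the description of $\Gc'\subseteq\Gc$ from Theorem \ref{4.Symb3} together with Proposition \ref{4.Icm}(2) to identify $\Gc\setminus\Gc'=\{N^m\mid N\in\G_{c,(1)}\}$, which has $\binom{s}{c-1}$ elements.
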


\begin{proof}
Part (1) follows from Theorem \ref{4.Symb2}. Part (2) follows from (1) and Theorem \ref{4.Symb3}.

\end{proof}

\begin{Remark}
In Theorem \ref{7.binom} we provide a closed formula for the number of minimal generators of degree $\delta(t(s-c)+m)$ when $t\geq \left\lfloor \frac{m}{2}\right\rfloor$.
\end{Remark}

\subsection{Symbolic defects for small height or small symbolic powers}

We now illustrate how the previous results provide explicit minimal generating sets and closed formulas for number of generators and symbolic defects for $I_{c,\F}^{(m)}$ when the height $c$ is small (Corollaries \ref{4.sdef2} and \ref{4.sdef3}) or the power $m$ is small (Corollary \ref{m<5}). Already these special cases are new in the literature.\\
\\
Let us recall that Galetto, Geramita, Shin and Van Tuyl employed results of Harbourne and Huneke about the symbolic Rees algebra of points in $\mathbb P^2$ to prove in \cite[Thm. 3.20]{GGSV} the following upper bound for the {\em even} symbolic defects of $I_{2,\F}$ -- provided $\F$ consists of linear forms and the ambient ring $R=k[x_0,x_1,x_2]$ is a polynomial ring in $3$ variables:
\be
\sdef(I_{2,\F} ,2q) \leq 1+s(q-1).
\ee
No upper bound was known for the odd symbolic defects $\sdef(I_{2,\F} ,2q+1)$. 

For illustration, we now extract the case $c=2$ from Corollary \ref{4.sdef} -- it is Corollary \ref{4.sdef2}. Already this special case provides
\begin{itemize}
\item the precise formula for the even symbolic defect $\sdef(I_{2,\F} ,2q)$ of star configuration of any degrees (not necessarily ``linear")  of height 2 and {\em any} projective space $\mathbb P^n$ (not necessarily $n=2$).
\item the precise formula for the \underline{odd} symbolic defects $\sdef(I_{2,\F} ,2q+1)$ -- which is fairly different from the formual for even symbolic defects, 
\item an explanation of why the two formulas are  different: essentially because the size of the solution set of  $2x_1+x_2=m$ depends heavily on the parity of $m$;
\item an explicit description of the minimal generating sets of $I_2^{(m)}$ and $I_2^{(m)}/I_2^m$.
\end{itemize}

In the special case of even symbolic defects of {\em linear} star configurations in $\PP^2$, Corollary \ref{4.sdef2} below shows that the above upper bound proved by Galetto, Geramita, Shin and Van Tuyl is actually an equality. \\
\\
Also, in the proofs we associate to the set $B=\{b_1,\ldots,b_h\}$ the Diophantine equation $b_1x_{b_1} + \ldots + b_hx_{b_h}=m$ instead of the equivalent equation $b_1x_1+\ldots + b_hx_h=m$, this provides consistency across all Diophantine equations obtained from the sets $B$. For instance, to $B=\{1\}$ and $B=\{2\}$ we associated $x_1=m$ and $2x_2=m$ instead of $x_1=m$ and $2x_1=m$.

\begin{Corollary}\label{4.sdef2}(``The case $c=2$")
Fix $c=2$, let $R,s,\F$ be as in Setting \ref{4.set} (in particular any three forms in $\F$ form a regular sequence).

Let $I_2$ denote a star configuration of height 2 on $\F$, let $G:=F_1F_2\cdots F_s$ and for any $1\leq i \leq s$ let $G_i:=F/F_i=\prod_{j\neq i}F_j$. Then for any $m\geq 1$,
\begin{enumerate}
\item 
\begin{enumerate}[(a)]
\item a minimal generating set of $I_2^{(m)}$ is given by
\be
\G_{2,(m)}= \{G^jG_i^{m-2j}\,\mid\,0\leq j \leq \lfloor \frac{m}{2} \rfloor,\,\,i=1,\ldots,s\}.
\ee

\item $\mu(I_2^{(m)})=\left\{ \begin{array}{ll}
s+ \lfloor\frac{ms}{2}\rfloor, & \mbox{ if }m \in 2\ZZ+1 \\
&\\
1+ \frac{ms}{2}, & \mbox{ if }m \in 2\ZZ\\
\end{array}\right.
$\\

\end{enumerate}

\item \begin{enumerate}
\item A minimal generating set of $I_2^{(m)}/I_2^m$ as $R$-module is given by
\be
\G_{2,(m)}'= \{G^jG_i^{m-2j}\,\mid\,1\leq j \leq \lfloor \frac{m}{2} \rfloor,\,\,i=1,\ldots,s\}.
\ee

\item One has 
\be  \sdef(I_2,m)=\left\{ \begin{array}{ll}
s\lfloor \frac{m}{2} \rfloor, & \mbox{ if }m \in 2\ZZ+1 \\
&\\
1+ s\left( \frac{m}{2} -1\right), & \mbox{ if }m \in 2\ZZ\\
\end{array}\right.
\ee
\end{enumerate}
\end{enumerate}

\end{Corollary}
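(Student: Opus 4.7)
The plan is to specialize the general structure theorems to codimension two, where the support constraints drastically limit the squarefree pieces that can appear in a normal form. Applying Theorem \ref{4.Symb} with $c = 2$, every minimal generator $M$ of $I_2^{(m)}$ has a normal form $M = M^{(1)} \cdots M^{(t)}$ in which each squarefree factor $M^{(j)}$ satisfies $|\supp(M^{(j)})| \ge s - 1$, so $|\supp(M^{(j)})| \in \{s-1, s\}$. Because $\F$ has a well-defined support (Proposition \ref{gcd}), the only squarefree monomial of support $s$ is $G = F_1 \cdots F_s$ and the only ones of support $s-1$ are $G_1, \ldots, G_s$; hence each $M^{(j)} \in \{G, G_1, \ldots, G_s\}$.

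Next I would exploit the nesting $\supp(M^{(j+1)}) \subseteq \supp(M^{(j)})$. Any factor equal to $G$ must precede any $G_i$, and once some $G_i$ appears every later factor has support contained in the $(s-1)$-element set $\supp(G_i)$; but the only squarefree monomial with support of size at least $s-1$ that fits inside $\supp(G_i)$ is $G_i$ itself. Thus the normal form collapses to $M = G^{j} G_i^{m-2j}$ for a single index $i$ (with $i$ irrelevant when $m = 2j$), and Remark \ref{4.sq}(2) gives $\Sdeg(M) = 2j + (m-2j) = m$, verifying that each such $M$ lies in $\G_{2,(m)}$. Combined with the exhaustiveness half of Theorem \ref{4.Symb}, this proves (1)(a). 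Part (2)(a) is then immediate from Theorem \ref{4.Symb3}: its additional requirement $|\supp(M)| \ge s + 2 - c = s$ simply amounts to $j \ge 1$.

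The counting statements (1)(b) and (2)(b) reduce to routine bookkeeping of the pairs $(j, i)$, with one subtlety: when $m$ is even and $j = m/2$ we obtain the single monomial $G^{m/2}$, independent of $i$, whereas every other admissible $j$ contributes $s$ distinct generators (pairwise distinct because $G_1, \ldots, G_s$ have pairwise distinct supports). This exceptional value $j = m/2$ is precisely what produces the parity split in the stated formulas; it is also what Corollary \ref{4.sdef} tracks through the subset $B = \{2\} \subseteq [c] = [2]$, whose Diophantine equation $2x_2 = m$ has a positive solution only for even $m$, while the contributions from $B = \{1\}$ and $B = \{1,2\}$ account for the remaining $s$-fold families. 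The main obstacle is purely combinatorial: ensuring that the single ``top'' generator $G^{m/2}$ is not double-counted when $m$ is even, and translating the parameter count into the closed forms on the right-hand side; the structural content is delivered entirely by the earlier theorems.
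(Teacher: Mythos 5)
Your proposal is correct and its structural core is identical to the paper's: for part (1)(a) the paper also observes that $c=2$ forces $|\supp(M^{(j)})| \in \{s-1,s\}$, collects the squarefree factors by support to write $M = G^{a_1}G_i^{a_2}$ with $2a_1 + a_2 = m$, and invokes Theorem~\ref{4.Symb}; your nesting argument explaining why a single index $i$ suffices after the first $G_i$ appears is the same observation, just spelled out more explicitly. The one genuine difference is in the counting. The paper proves (1)(b) and (2)(b) \emph{first}, before (1)(a), by specializing Corollary~\ref{4.sdef}: it enumerates the Diophantine data $N_{\{1\}}=1$, $N_{\{2\}}\in\{0,1\}$ according to parity, and computes $N_{\{1,2\}}$ (the number of positive solutions to $2a_1+a_2=m$) in two parity cases. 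You instead derive (1)(a) first and then count the pairs $(j,i)$ directly, with the single collision at $j=m/2$ (even $m$) producing the parity split. Both routes are valid and give $s\bigl(\lfloor m/2\rfloor + 1\bigr)$ when $m$ is odd and $1 + sm/2$ when $m$ is even, in agreement with the intermediate formula the paper itself derives in its proof; your direct tally is arguably the more transparent way to see where the parity dependence comes from, while the paper's route illustrates the general Diophantine machinery of Corollary~\ref{4.sdef} in the simplest nontrivial case. (One caution: when you say the bookkeeping ``translates into the closed forms on the right-hand side,'' note that your count for $m$ odd is $s + s\lfloor m/2\rfloor$, which matches what the paper's own proof obtains but not the displayed expression $s + \lfloor ms/2\rfloor$ in the corollary statement; the latter appears to contain a typographical slip, so you should report the formula your count actually yields rather than force agreement with the printed one.)
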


\begin{proof}
We first prove (1)(b) and (2)(b). Since $c=2$, then either $B=\{1\}$ or $B=\{2\}$ or $B=\{1,2\}$. Let $N_B:=|\mathbb S_B|$. Corollary \ref{4.sdef} yields
\be
\mu(I_2^{(m)})= sN_{\{1\}} + N_{\{2\}} + sN_{\{1,2\}}=s+N_{\{2\}} + sN_{\{1,2\}},
\ee
because $N_{\{1\}}=1$ (corresponding to the only solution to the equation $x_1=m$).
Observe that $N_{\{2\}}$ is the number of positive solutions to the equation $2x_2=m$. If $m\in 2\ZZ+1$ is odd, there are no solutions, if $m\in 2\ZZ$ is even, then there is precisely one solution. So, 
\be
\mu(I_2^{(m)})= \left\{ \begin{array}{ll}
s+ sN_{\{1,2\}}, & \mbox{ if } m\in 2\ZZ+1\\
s+1+ sN_{\{1,2\}}, & \mbox{ if } m\in 2\ZZ
\end{array}\right.
\ee

Next, we need to compute $N_{\{1,2\}}$, i.e. the number of distinct positive solutions to the diophantine equation $2a_1+a_2=m$. First, assume $m=2t+1$ is odd. Since $m$ is odd, the equation has integer solutions if and only if $a_2$ is odd. Since $a_1>0$ and $1\leq a_2\leq m=2t+1$, then there are $t$ odd possible values of $a_2$ in the range $1\leq a_2 \leq m-1=2t$. For each of them, there is precisely one solution for $a_1$. So when $m$ is odd we have $N_{\{1,2\}} = t= \lfloor \frac{m}{2} \rfloor$.

Now, assume $m=2t$ is even. This time $2a_1+a_2=m$ has solutions if and only if $a_2$ is even. Since $a_2\geq 1$, then there are precisely $t-1 = \frac{m}{2}-1$ solutions. This proves that 
$
N_{\{1,2\}} = \frac{m}{2}  - 1$ if $m$ is even. Therefore
\be
\mu(I_2^{(m)})= \left\{ \begin{array}{ll}
s+ s\left(\lfloor \frac{m}{2} \rfloor\right), & \mbox{ if } m\in 2\ZZ+1\\
s+1+ s\left(\frac{m}{2}-1\right), & \mbox{ if } m\in 2\ZZ
\end{array}\right.
\ee
and then by Corollary \ref{4.sdef} 
\be
\sdef(I_2,m)= \left\{ \begin{array}{ll}
s\lfloor \frac{m}{2} \rfloor, & \mbox{ if }m \in 2\ZZ+1 \\
&\\
1+ s\left( \frac{m}{2} -1\right), & \mbox{ if }m \in 2\ZZ\\
\end{array}\right.
\ee

(1)(a) Let $M\in \G_{2,(m)}$. Since $c=2$, every $M^{(j)}$ in the normal form of $M$ has $|\supp(M^{(j)})|\geq s+1-c=s-1$. Thus, by collecting together all $M^{(j)}$ with the same support we see that for every $M$ there exists an index $i$ and integers $a_1,a_2$ such that $M=G^{a_1}G_i^{a_2}$, where $G$ and $G_i$ are as in the statement. Since $\Sdeg(G)=2$ and $\Sdeg(G_i)=1$ for every $i$, we have $2a_1+a_2=m$. Then, by Theorem \ref{4.Symb}, we obtain
$$
\G_{2,(m)} =\left\{M=G^{a_1}G_i^{a_2}\,\mid\, 1\leq i \leq s,\, 2a_1+a_2=m \right \}.
$$
Since $2a_1+a_2=m$, then $0\leq a_1 \leq  \lfloor \frac{m}{2} \rfloor$. This proves (1)(a).

Part (2)(a) follows from (1)(a) and Corollary \ref{4.sdef}.

\end{proof}

\begin{Remark}
For $c\geq 3$ no formula was known for $\mu(I_c^{(m)})$ or $\sdef(I_c,m)$. In some sense, Corollary \ref{4.sdef} explains why: when $c\geq 3$ grows, the formula gets progressively more involved. To illustrate it, we provide the closed formula for star configurations when $c=3$; observe that it depends on the remainder of the power $m$ when divided by $6$.

Closed formulas can be given for any $c\geq 4$, the issue is that if one sets $L(c):={\rm lcm}(1,2,\ldots,c)$, then these formulas have $L(c)$ distinct cases, one for each possible remainder of $m$ when divided by $L(c)$.
\end{Remark}

\begin{Corollary}\label{4.sdef3}(``The case $c=3$")
Fix $c=3$, let $R,s,\F$ be as in Setting \ref{4.set}. For any positive integer $m$, write $m=6q+r$ where $q,r$ are integers and $0\leq r \leq 5$. 
\begin{enumerate}
\item If $G:=F_1F_2\cdots F_s$, $G_i := G/F_i$ for every $i=1,\ldots,s$ and $G_{ij}:=G/F_iF_j$ for every $1\leq i < j \leq s$, then a minimal generating set of $I_3^{(m)}$ is  
\be
\G_{3,(m)} = \{G^{a_1}G_i^{a_2}G_{ij}^{a_3}\,\mid\, 1\leq i < j \leq s,\; a_h\in \mathbb N_0 \mbox{ and }3a_1+2a_2+a_3=m\},
\ee
and one has \be
\mu(I_3^{(m)})=\left\{
\begin{array}{ll}
\binom{s}{2}( \frac{m^2}{6} + \frac{m}{3} )  + s\cdot \frac{m}{6} + 1, & \mbox{ if } r=0 \\
\binom{s}{2} ( \frac{(m-1)^2}{6} + 2\frac{m-1}{3} +1  )  + s\frac{m-1}{6}, & \mbox{ if } r=1 \\
\binom{s}{2} ( \frac{(m-2)^2}{6} + m -1) + s  ( 1 + \frac{m-2}{6}) , & \mbox{ if } r=2 \\
\binom{s}{2} (\frac{(m-3)^2}{6} + 4\frac{m}{3}  -1  )  + s\frac{m-3}{6} + 1, & \mbox{ if } r=3 \\
\binom{s}{2} (\frac{(m-4)^2}{6} +  5\frac{m-1}{3} -1  )   + s(1  + \frac{m-4}{6}), & \mbox{ if } r=4 \\
\binom{s}{2} ( \frac{(m-5)^2}{6}  + 2m - 4 ) + s\frac{m+1}{6} , & \mbox{ if } r=5 \\
\end{array}
\right.
\ee

\item A minimal generating set of $I_3^{(m)}/I_3^m$ is  
\be
\G_{3,(m)}' = \{G^{a_1}G_i^{a_2}G_{ij}^{a_3}  \in \G_{3,(m)} \,\mid\, a_1+a_2>0\},
\ee
and  the $m$-th symbolic defect of $I_3$ is
\be
\sdef(I_3,m)= \left\{
\begin{array}{ll}
\binom{s}{2}( \frac{m^2}{6} + \frac{m}{3} -1)  + s\cdot \frac{m}{6} + 1, & \mbox{ if } r=0 \\
\binom{s}{2} ( \frac{(m-1)^2}{6} + 2\frac{m-1}{3}   )  + s\frac{m-1}{6}, & \mbox{ if } r=1 \\
\binom{s}{2} ( \frac{(m-2)^2}{6} + m -2) + s  ( 1 + \frac{m-2}{6}) , & \mbox{ if } r=2 \\
\binom{s}{2} (\frac{(m-3)^2}{6} + 4\frac{m}{3}  -2  )  + s\frac{m-3}{6} + 1, & \mbox{ if } r=3 \\
\binom{s}{2} (\frac{(m-4)^2}{6} +  5\frac{m-1}{3} -2  )   + s(1  + \frac{m-4}{6}), & \mbox{ if } r=4 \\
\binom{s}{2} ( \frac{(m-5)^2}{6}  + 2m - 5 ) + s\frac{m+1}{6} , & \mbox{ if } r=5 \\
\end{array}
\right.
\ee

\end{enumerate}
\end{Corollary}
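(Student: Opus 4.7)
The plan is to derive Corollary \ref{4.sdef3} as a direct specialization of Theorem \ref{4.Symb}, Theorem \ref{4.Symb3}, and Corollary \ref{4.sdef} to the case $c=3$, following the blueprint established in the proof of Corollary \ref{4.sdef2}. First, for (1)(a), I would invoke Theorem \ref{4.Symb}: if $M \in \G_{3,(m)}$ with normal form $M = M^{(1)}\cdots M^{(t)}$, then every $M^{(j)}$ satisfies $|\supp(M^{(j)})| \geq s-2$. Since each $M^{(j)}$ is squarefree, it must have support of size exactly $s$, $s-1$, or $s-2$, corresponding to $G$, some $G_i$, or some $G_{ij}$ respectively. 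Grouping the factors with identical support (as in the proof of Theorem \ref{4.Symb2}) and using $\Sdeg(G)=3$, $\Sdeg(G_i)=2$, $\Sdeg(G_{ij})=1$ together with $\Sdeg(M)=m$ yields the presentation $M = G^{a_1} G_i^{a_2} G_{ij}^{a_3}$ with $3a_1+2a_2+a_3=m$.

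For (1)(b) I would apply Corollary \ref{4.sdef} and enumerate the seven nonempty subsets $B \subseteq \{1,2,3\}$. Writing $N_B := |\mathbb S_B|$ and computing the associated binomial coefficients, the three singleton cases contribute $\binom{s}{2}N_{\{1\}} + s N_{\{2\}} + N_{\{3\}}$ with $N_{\{1\}}=1$, $N_{\{2\}} = [2 \mid m]$, and $N_{\{3\}} = [3 \mid m]$. The pair cases $\{1,2\}$, $\{1,3\}$, $\{2,3\}$ contribute $s(s-1)N_{\{1,2\}} + \binom{s}{2}N_{\{1,3\}} + s N_{\{2,3\}}$ after computing the products of binomials prescribed by Corollary \ref{4.sdef}; the numbers $N_{\{1,2\}}$, $N_{\{1,3\}}$ and $N_{\{2,3\}}$ are elementary counts of positive solutions to $x+2y=m$, $x+3y=m$ and $2x+3y=m$, which depend only on $m \bmod 2$, $m \bmod 3$, and $m \bmod 6$ respectively. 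Finally the full case $B=\{1,2,3\}$ contributes $s(s-1)N_{\{1,2,3\}}$, where $N_{\{1,2,3\}} = \sum_{z\geq 1,\,3z\leq m-3}\lfloor (m-1-3z)/2\rfloor$ counts positive triples with $x+2y+3z=m$.

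The main obstacle is exactly this last count: it must be evaluated case-by-case for $m \equiv 0,1,2,3,4,5 \pmod 6$, since the interaction between parity constraints on $x$ and the congruence of $z$ modulo $2$ forces a split along the $\mathrm{lcm}(2,3)=6$ residue classes. After computing the six arithmetic-progression sums for $N_{\{1,2,3\}}$ one collects like terms, rewrites $s(s-1) = 2\binom{s}{2}$, and aggregates the six subtotals, producing a single quadratic in $m$ with coefficient $\binom{s}{2}\cdot \frac{(m-r)^2}{6}$ and additional linear-in-$m$ and constant corrections whose exact form is recorded in the six stated expressions. A small verification at $m=6q$ (where all six residues are active as check values) against the known identity $\mu(I_3^{(m)}) = \sum_t |U_t|$ (Theorem \ref{4.Symb2}(2)) will confirm the arithmetic.

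For (2)(a), I would invoke Theorem \ref{4.Symb3}: the alternative description $\G_{3,(m)}' = \{M \in \G_{3,(m)} \mid |\supp(M)| \geq s-1\}$ specializes immediately, because in the normal form of $M = G^{a_1}G_i^{a_2}G_{ij}^{a_3}$ the first factor $M^{(1)}$ (which determines $\supp(M)$) is $G$ precisely when $a_1>0$, $G_i$ when $a_1=0<a_2$, and $G_{ij}$ when $a_1=a_2=0$; the condition $|\supp(M)| \geq s-1$ thus becomes $a_1+a_2>0$. Part (2)(b) then follows from Corollary \ref{4.sdef}, since $\sdef(I_3,m) = \mu(I_3^{(m)}) - \binom{s}{c-1} = \mu(I_3^{(m)}) - \binom{s}{2}$; subtracting $\binom{s}{2}$ from each of the six expressions in (1)(b) yields the six claimed formulas for the symbolic defect, concluding the proof.
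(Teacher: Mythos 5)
Your proposal is correct and follows exactly the route the paper intends: the paper proves the $c=2$ case (Corollary \ref{4.sdef2}) in full and omits the proof for $c=3$, treating it as the same specialization of Corollary \ref{4.sdef}, Theorem \ref{4.Symb}, and Theorem \ref{4.Symb3}. Your enumeration of the seven subsets $B\subseteq\{1,2,3\}$ with the correct binomial weights ($\binom{s}{2}$, $s$, $1$, $s(s-1)$, $\binom{s}{2}$, $s$, $s(s-1)$), the identification of $N_{\{1,2,3\}}$ as the reason for the sixfold split, and the Nakayama-type deduction of the symbolic defect by subtracting $\binom{s}{2}$ are precisely the steps the paper's implicit proof requires.
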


\begin{Example}
Let $R=k[x_0,\ldots,x_n]$ and $\F=\{F_1,\ldots,F_s\}$ be $s\ge 4$ forms of any degrees such that any 4 of them form a regular sequence. Let $I_3:=I_{3,\F}$ be their star configuration of height 3. Then $I_3^{(28)}$ has
$$\mu(I_3^{(28)})=140\binom{s}{2} + 5s = 70s^2-65s$$ 
minimal generators; its $28$-th symbolic defect is 
$$
\sdef(I_3,28) = 139\binom{s}{2} + 5s.
$$
\end{Example}

Next, we consider the case of small symbolic powers. In \cite[Cor. 3.15]{GGSV} the authors prove that $\sdef(I_{c,\F}, 2) \leq \binom{s}{c-2}$ and equality holds for {\em linear} star configurations. In Corollary \ref{m<5}(1) we remove any restriction on the degrees.

 Recall that if $a$ is a negative integer, then $\binom{t}{a}=0$ for every $t\in \ZZ$.

\begin{Corollary}\label{m<5}(``The cases $m\leq 4$")
Let $R,c,s,\F$ be as in Setting \ref{4.set}, then
\begin{enumerate}
\item $\mu(I_c^{(2)}) = \binom{s+1}{c-1}$ and $\sdef(I_c, 2) = \binom{s}{c-2}$.
\item $\mu(I_c^{(3)}) =\binom{s}{c-1} + (s-c+2)\binom{s}{c-2}+\binom{s}{c-3}$, and 
$$\sdef(I_c, 3) = (s-c+2)\binom{s}{c-2}+\binom{s}{c-3}.$$
\item $\mu(I_c^{(4)}) =\binom{s}{c-1} + \binom{s}{c-2}(s-c+3)+\binom{s}{c-3}\binom{s-c+3}{2} + \binom{s}{c-4}$ and 
$$\sdef(I_c,4)= \binom{s}{c-2}(s-c+3)+\binom{s}{c-3}\binom{s-c+3}{2} + \binom{s}{c-4}.$$
\end{enumerate}
\end{Corollary}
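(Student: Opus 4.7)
The plan is to deduce the three items as a direct application of Corollary \ref{4.sdef}, which expresses $\mu(I_c^{(m)})$ as a sum indexed by subsets $B \subseteq [c]$, weighted by the cardinality of the positive-solution set $\mathbb S_B$ of the Diophantine equation $b_1 x_{b_1} + \cdots + b_h x_{b_h} = m$ and by a product of binomial coefficients depending only on $s$, $c$, and $B$. For $m \leq 4$ the list of subsets $B$ for which $\mathbb S_B \neq \emptyset$ is very short, so the sum collapses to a closed form.

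For $m = 2$, the only subsets $B \subseteq [c]$ admitting a positive integer solution are $B = \{1\}$, giving $x_1 = 2$ (one solution, contribution $\binom{s}{c-1}$), and $B = \{2\}$, giving $x_2 = 1$ (one solution, contribution $\binom{s}{c-2}$); any $B$ with largest element $b_h \geq 2$ together with $1 \in B$ forces the minimum value of $\sum b_i x_{b_i}$ to exceed $2$. Summing and applying Pascal's identity $\binom{s}{c-1} + \binom{s}{c-2} = \binom{s+1}{c-1}$ gives the formula for $\mu(I_c^{(2)})$; subtracting the $B = \{1\}$ term (which is exactly $\mu(I_c^m)$'s contribution in the last column of equation (\ref{G_m'})) gives $\sdef(I_c, 2) = \binom{s}{c-2}$.

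For $m = 3$, the nonempty cases are $B = \{1\}$ (solution $x_1 = 3$), $B = \{1, 2\}$ (solution $(x_1, x_2) = (1, 1)$), and $B = \{3\}$ (solution $x_3 = 1$); the subsets $\{2\}$, $\{1,3\}$, $\{2,3\}$, $\{1,2,3\}$ all have empty solution sets for parity or size reasons. The contributions, read off from the binomial product in Corollary \ref{4.sdef}, are $\binom{s}{c-1}$, $\binom{s}{c-2}\binom{s-c+2}{1} = (s-c+2)\binom{s}{c-2}$, and $\binom{s}{c-3}$ respectively, which matches (2). For $m = 4$ the nonempty cases are $B = \{1\}$ with $x_1 = 4$, $B = \{2\}$ with $x_2 = 2$, $B = \{1,2\}$ with $(x_1,x_2) = (2,1)$, $B = \{1,3\}$ with $(x_1,x_3) = (1,1)$, and $B = \{4\}$ with $x_4 = 1$; one then verifies that $\{3\}$, $\{2,3\}$, $\{1,4\}$, $\{2,4\}$, $\{3,4\}$, and all three-element subsets yield no positive solutions. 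The contributions $\binom{s}{c-1}$ from $\{1\}$, $\binom{s}{c-2}$ from $\{2\}$, $\binom{s}{c-2}\binom{s-c+2}{1}$ from $\{1,2\}$, $\binom{s}{c-3}\binom{s-c+3}{2}$ from $\{1,3\}$, and $\binom{s}{c-4}$ from $\{4\}$ sum, after grouping the $\binom{s}{c-2}$ terms as $\binom{s}{c-2}(1 + (s-c+2)) = (s-c+3)\binom{s}{c-2}$, to the claimed formula. In each case the symbolic defect formula then follows by subtracting the $B = \{1\}$ contribution $\binom{s}{c-1}$, as dictated by Corollary \ref{4.sdef}(2).

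The only genuine obstacle is the bookkeeping of ruling out subsets $B$ with empty solution sets; this is systematic but must be done carefully to ensure no viable $B$ is omitted. A uniform shortcut is to note that any $B = \{b_1, \ldots, b_h\}$ with $h \geq 2$ contributes only if $b_1 + b_2 + \cdots + b_h \leq m$ (since each $x_{b_i} \geq 1$), which for $m \leq 4$ restricts $B$ to the short list above and makes the enumeration essentially mechanical.
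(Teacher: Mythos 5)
Your proposal is correct and takes essentially the same route as the paper: both derive the formulas by applying Corollary \ref{4.sdef}, enumerating the subsets $B \subseteq [c]$ whose associated Diophantine equation has a positive solution, noting that each admissible $B$ here has $|\mathbb S_B| = 1$, and reading off the binomial contributions. Your uniform observation that $B = \{b_1,\ldots,b_h\}$ contributes only if $\sum_i b_i \leq m$ is a tidy way to justify the short lists that the paper states without further comment.
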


\begin{proof}
The starting point is given by the formulas in Corollary \ref{4.sdef}.
(1) Since $m=2$, the only Diophantine equations having positive solutions are $x_1=2$ and $2x_2=2$. Therefore, the only subsets $B\subseteq [c]$ one can take are $B=\{1\}$ and $B=\{2\}$ and for each of them $|\mathbb S_B|=1$. Then 
\be
\mu(I_c^{(2)}) = \binom{s}{c-1} + \binom{s}{c-2} = \binom{s+1}{c-1},
\ee
and by Theorem \ref{4.sdef} we have $\sdef(I_c,2)=\mu(I_c^{(2)}) - \binom{s}{c-1} = \binom{s}{c-2}.$

(2) Since $m=3$, the only Diophantine equations having positive solutions are $x_1=3$, $x_1+2x_2=3$ and $3x_3=3$. Therefore, the only subsets $B\subseteq [c]$ one can take are $B=\{1\}$ and $B=\{1,2\}$ and $B={\{3\}}$. Since for each of them $|\mathbb S_B|=1$, then
\be
\mu(I_c^{(3)}) = \binom{s}{c-1} + (s-c+2)\binom{s}{c-2}+\binom{s}{c-3}.
\ee
The statements about $\sdef(I_c,3)$ now follow by Theorem \ref{4.sdef}.

(3) is proved similarly. The Diophantine equations having positive solutions are 
\begin{itemize}
\item $x_1=4$ (in this case $B=\{1\}$),
\item $2x_2=4$, whose only solution is $x_2=2$, (in this case $B=\{2\}$)
\item $x_1+2x_2=4$, whose only solution is $x_1=2$ and $x_2=1$ (and $B=\{1,2\}$),
\item $x_1+3x_3=4$, whose only solution is $x_1=1$ and $x_3=1$, (and $B=\{1,3\}$),
\item $4x_4=4$, whose only solution is $x_4=1$ (in this case $B=\{4\}$).
\end{itemize}
Since each of them has a unique positive solution, then $|\mathbb S_B|=1$ for all $B$. The formula now follows from Corollary \ref{4.sdef}.

\end{proof}

\section{Partitions and a total order on the minimal generators of $I_{c,\F}^{(m)}$}

The goal of this technical section is to determine a total order on the generating set $\Gc$ of $I_c^{(m)}$; it will be used in the next section to prove that symbolic powers of  star configurations have {\em c.i. quotients} (see Definition \ref{7.ci}). In particular, symbolic powers of linear star configurations have linear quotients (Corollary \ref{7.linear}) and then their Betti tables have {\em linearly stranded shape}, i.e. they are obtained as the (numerical) union of linear strands which start at each degree of a minimal generator of $I_c^{(m)}$ (Corollary \ref{7.stranded}).\\
\\
To define the total order, we establish a connection between partitions of $m$ and elements in $\Gc$.

Since for every element $M$ in $\Gc$, the monomials $M^{(j)}$ appearing in its normal form have support of size at least $s-c+1$, then $\Sdeg(M^{(j)})=d_j$ for some $d_j\geq 1$, and $M^{(j)}$ is a minimal generator of the star configuration $I_{c-d_j+1}$. 

Therefore, motivated by the upcoming connection with partitions (Corollary \ref{6.surj}), we adopt the following more efficient notation for {\em the normal forms of the elements in $\Gc$}. 

\begin{Notation}\label{5.1}
Let $M\in \Gc$, we write its normal form as 
$$
M =  M_{(d_1)}M_{(d_2)} \cdots M_{(d_t)},
$$
where $\supp(M_{(d_{(j+1)})})\subseteq \supp(M_{(d_{j})})$ for every $j$ and $|\supp(M_{(d_j)})| = s-c+d_j$. 
\end{Notation}

Observe that, by Theorem \ref{4.Symb}, each $M_{(d_{j})}$ is a minimal generator of the star configuration $I_{c-d_j+1}$.

\begin{Example}\label{4.xyzw}
Let $R=k[x,y,z,w]$, $c=3$ and $\F=\{x,y,z,w\}$ (i.e. monomial star configuration). 
Then the normal form of $M=x^2y^2zw$ is 
$
M=\underbrace{(xyzw)}_{M_{(3)}}\underbrace{(xy)}_{M_{(1)}} = M_{(3)}M_{(1)}$. 

In particular, $t=\lambda(M)=2$ and $d_1=3$ and $d_2=1$. Also, $M_{(3)}$ is a minimal generator of $I_{c-3+1}=I_1=(xyzw)$, and $M_{(1)}$ is a minimal generator of $I_{c-1+1}=I_c=I_3$.

On the other hand, the normal form of $N=x^6y^2z^3w^6$ is 
$$
N=\underbrace{(xyzw)}_{N_{(3)}}\underbrace{(xyzw)}_{N_{(3)}}\underbrace{(xzw)}_{N_{(2)}}\underbrace{(xw)}_{N_{(1)}}\underbrace{(xw)}_{N_{(1)}}\underbrace{(xw)}_{N_{(1)}}=N_{(3)}N_{(3)}N_{(2)}N_{(1)}N_{(1)}N_{(1)},
$$
thus for $N$ we have $t=\lambda(N)=6$ and $d_1=d_2=3$, $d_3=2$ and $d_4=d_5=d_6=1$.
\end{Example}

We next observe that monomial orders can be extended to the much general situation of monomials in $\F$, provided that $\F$ allows a unique monomial support:
\begin{definition}
Let $R$ be a polynomial ring over a field and let $\F$ be a set of forms of $R$ that allows a unique monomial support. A {\em monomial order on (the monomials in) $\F$} is a total order on the set of all monomials in $\F$ with $M>1$ for any monomial $M$ in $\F$, and 
 for all monomials $N,N',M$ in $\F$ with $N>N'$, one has $MN>MN'$. 
\end{definition}

We remark that ordinary monomial orders induce monomial orders on $\F$:
\begin{Lemma}\label{5.order}
Let $S=k[y_1,\ldots,y_s]$ and $R=k[x_0,\ldots,x_n]$ be polynomial rings over the same field $k$. Let $\F=\{F_1,\ldots,F_s\}$ be a set of forms in $R$ that allows a unique monomial support. 

Then any monomial order on $k[y_1,\ldots,y_s]$ with $y_{h_1}>\cdots >y_{h_s}$ induces a monomial order on $\F$ with  $F_{h_1}>F_{h_2}>\cdots > F_{h_s}$.
\end{Lemma}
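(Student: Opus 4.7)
The plan is to exploit the unique monomial support assumption to set up a multiplication-preserving bijection between ordinary monomials in $S$ and monomials in $\F$, and then transport the monomial order on $S$ along this bijection.

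First, I would define the map $\pi$ from the set of monomials of $S$ to the set of monomials in $\F$ by
$$\pi(y_1^{a_1}\cdots y_s^{a_s}) = F_1^{a_1}\cdots F_s^{a_s}.$$
The assumption that $\F$ allows a unique monomial support means precisely that every monomial in $\F$ can be written as $F_1^{a_1}\cdots F_s^{a_s}$ for a unique exponent vector $(a_1,\ldots,a_s)\in \mathbb N_0^s$; therefore $\pi$ is a well-defined bijection. By construction $\pi$ is multiplicative: $\pi(uv)=\pi(u)\pi(v)$ for any monomials $u,v \in S$.

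Next, I would define $>_\F$ on monomials in $\F$ by $M >_\F N \iff \pi^{-1}(M) >_S \pi^{-1}(N)$, where $>_S$ is the given monomial order on $S$ satisfying $y_{h_1}>\cdots >y_{h_s}$. I would then check the three axioms of a monomial order on $\F$: (i) $>_\F$ is a total order since $>_S$ is, and $\pi$ is a bijection; (ii) for any nontrivial monomial $M$ in $\F$, one has $\pi^{-1}(M)>_S 1$ in $S$, hence $M >_\F 1$; (iii) given $N>_\F N'$, multiplicativity of $>_S$ yields $\pi^{-1}(M)\pi^{-1}(N) >_S \pi^{-1}(M)\pi^{-1}(N')$, and the multiplicativity of $\pi$ rewrites this as $\pi^{-1}(MN)>_S \pi^{-1}(MN')$, so $MN>_\F MN'$.

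Finally, applying $\pi$ to the chain $y_{h_1}>_S \cdots >_S y_{h_s}$ gives $F_{h_1}>_\F \cdots >_\F F_{h_s}$, which is the required compatibility. There is no substantial obstacle: the only nontrivial input is the well-definedness of $\pi^{-1}$, which is exactly the content of the unique monomial support hypothesis; everything else is a routine transport of structure along a multiplicative bijection.
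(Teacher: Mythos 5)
Your proposal is correct and mirrors the paper's argument: the paper defines the $k$-algebra map $\varphi:S\to R$ sending $y_i\mapsto F_i$, notes that unique monomial support makes the preimage $M_*$ of any monomial $M$ in $\F$ well-defined, and declares $M>N$ iff $M_*>N_*$. Your $\pi$ is just $\varphi$ restricted to monomials, and your more explicit verification of the three monomial-order axioms is exactly what the paper dismisses as ``immediately seen.''
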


\begin{proof}
Let $\varphi:S\lra R$ be the $k$-algebra map defined by $\varphi(y_i)=F_i$ for $i=1,\ldots,s$. Since $\F$ allows a unique monomial support, for any monomial $M$ in $\F$ there is a unique monomial $M_*$ in $S$ with $\varphi(M_*) = M$.

Then, for any two monomials $M,N$ in $\F$, we set $M>N$ if and only if $M_*>N_*$. It is immediately seen that this defines a monomial order on $\F$. 

\end{proof}

Depending on the settings and objectives, the definition of partitions of integers may or may not allow zero entries. 
For our purpose, we are only interested in {\em partitions with positive entries}. Thus, we denote by $\Po_{\leq c}(m)$ the set of all possible partitions $[\ul{d}]=[d_1,\ldots,d_t] \vdash m$ of $m$, where each entry is positive and at most $c$, and the entries are listed in non-increasing order, i.e. 
$$
\Pc=\left\{[\ul{d}]=[d_1,\ldots,d_t] \vdash m \,\mid\, c\geq d_1\geq d_2\geq \cdots \geq d_t\geq 1 \right\}.
$$
 We call $t:=\lambda([\ul{d}])$ the {\em length} of the partition $[\ul{d}]$. For any $t\in \ZZ_+$, we let 
 $$
 [\Pc]_t:=\{[\ul{d}] \in \Pc\,\mid\,\lambda(\ul{d})=t\}.
 $$
Summarizing the above, for us a given vector $[\ul{d}]=[d_1,\ldots,d_t]$ is a partition of $m$ if and only if 
\begin{itemize}
\item[(i)] $\sum_j d_j =m$;
\item[(ii)] $d_j>0$ for all $j$;
\item[(iii)] $d_j\geq d_{j+1}$ for all $j$.
\end{itemize}

The following important connection between minimal generators of $I_{c,\F}^{(m)}$ and partitions of $m$ now follows immediately from Theorem \ref{4.Symb} and the normal forms of monomials (in Notation \ref{5.1}).

\begin{Corollary}\label{6.surj}
Let $R,c,s,\F$ be as in Setting \ref{4.set}. Then for any $m\geq 1$ 
\begin{enumerate}
\item there exists a surjective function \\
\begin{center}
\begin{tabular}{rcl}
$P:\G_{c,(m)}$ & $\lra$ & $\Pc$ \\
&&\\
$M=M_{(d_1)}\cdots M_{(d_t)} $& $\longmapsto$ & $[d_1,\ldots,d_t]$.
\end{tabular}
\end{center}
where $M=M_{(d_1)}\cdots M_{(d_t)} $ is the normal form of $M$ in Notation \ref{5.1}.\\

\item Further assume all forms in $\F$ have degree $\delta$. Then, for any $\left\lceil \frac{m}{c} \right \rceil\leq t \leq m$, the restriction of $P$ to $\{M\in \Gc \,\mid\, \deg(M)=\delta(t(s-c)+m)\}$ gives a surjection
$$U_t:=\{M\in \Gc \,\mid\, \deg(M)=\delta(t(s-c)+m)\}  \lra [\Pc]_t$$

\end{enumerate}
\end{Corollary}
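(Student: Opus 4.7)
The plan is to directly verify the two claims from the structural description of $\G_{c,(m)}$ given in Theorem \ref{4.Symb}, using Notation \ref{5.1}. First I would argue that the map $P$ is well defined, i.e.\ that the tuple $[d_1,\ldots,d_t]$ attached to any $M\in\G_{c,(m)}$ genuinely lies in $\Pc$. By definition of the normal form in Notation \ref{5.1}, each squarefree factor $M_{(d_j)}$ satisfies $|\supp(M_{(d_j)})|=s-c+d_j$ with $\supp(M_{(d_{j+1})})\subseteq\supp(M_{(d_j)})$, so $s-c+d_{j+1}\le s-c+d_j$, giving the monotonicity $d_1\ge d_2\ge\cdots\ge d_t$. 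The condition $M\in\G_{c,(m)}$ forces $|\supp(M_{(d_j)})|\ge s-c+1$ and $\le s$, so $1\le d_j\le c$, and Remark \ref{4.sq}(2) together with $\Sdeg(M_{(d_j)})=d_j$ yields $d_1+\cdots+d_t=\Sdeg(M)=m$. Hence $[d_1,\ldots,d_t]\in\Pc$.

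For the surjectivity in (1), given any $[\ul d]=[d_1,\ldots,d_t]\in\Pc$, I would explicitly exhibit a preimage: set
\[
M_{(d_j)} := F_1 F_2 \cdots F_{s-c+d_j}, \qquad j=1,\ldots,t,
\]
and define $M := M_{(d_1)}\cdots M_{(d_t)}$. Monotonicity $d_1\ge\cdots\ge d_t$ gives the nested supports, so this is the normal form of $M$ in the sense of Notation \ref{5.1}. Each factor is squarefree with $|\supp(M_{(d_j)})|=s-c+d_j\ge s-c+1$, and $\Sdeg(M)=\sum_j d_j=m$ by Remark \ref{4.sq}(2). Hence by Theorem \ref{4.Symb}, $M\in\G_{c,(m)}$ with $P(M)=[\ul d]$.

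For part (2), once the forms of $\F$ all have degree $\delta$, one only needs to track the total degree under $P$. For $M\in\G_{c,(m)}$ with normal form $M=M_{(d_1)}\cdots M_{(d_t)}$,
\[
\deg(M) = \delta\sum_{j=1}^{t}|\supp(M_{(d_j)})| = \delta\sum_{j=1}^{t}(s-c+d_j) = \delta\bigl(t(s-c)+m\bigr),
\]
so membership in $U_t$ is equivalent to $\lambda(P(M))=t$. Thus $P$ carries $U_t$ into $[\Pc]_t$. Surjectivity is immediate from the explicit construction above, since for $[\ul d]\in[\Pc]_t$ the monomial $M$ built there has exactly $t$ factors in its normal form, hence lies in $U_t$.

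No step appears substantial; the only minor subtlety is matching the inequalities on support sizes with the bounds $1\le d_j\le c$ defining $\Pc$, and checking that the explicit preimage satisfies the membership criterion of Theorem \ref{4.Symb} rather than just being some monomial of symbolic degree $m$. Both are routine given the machinery already established in Sections 3 and 4.
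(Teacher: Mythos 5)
Your proof is correct and follows exactly the route the paper intends; the paper in fact gives no explicit proof, stating only that the corollary ``follows immediately from Theorem \ref{4.Symb} and the normal forms of monomials (in Notation \ref{5.1}),'' and your argument is precisely the direct verification that fills in that claim, including the explicit preimage $M_{(d_j)}=F_1\cdots F_{s-c+d_j}$ used implicitly in the paper's later arguments (e.g.\ in the proof of Theorem \ref{4.Symb2}(2)).
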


When $M,M'$ are monomials in $\Gc$ with $P(M)=P(M')$ we say that $M,M'$ {\em have the same associated partition}. 

For instance, if $M\in \Gc$ has normal form $M=(M_{(1)})^m=M_{(1)}M_{(1)}\cdots M_{(1)}$ for some $M_{(1)}\in \G_{c,(1)}$, then its associated partition is $P(M)=[1,1,\ldots,1]$, with $m$ entries. If $M_{(1)}'\neq M_{(1)} \in \G_{c,(1)}$, then $M$ and $M'=(M_{(1)}')^m$ have the same associated partition. Since $|\G_{c,(1)}|=\binom{s}{s-c+1}=\binom{s}{c-1}$, then there are $\binom{s}{c-1}$ distinct monomials in $\Gc$ whose associated partition is $[1,\ldots,1]$. 
\bigskip

Next, we recall the {\em anti-graded lex} total order {\em alex} on the partitions of $m$:
$$
[d_1,\ldots,d_t] >_{\rm alex} [b_1,\ldots,b_u] \Llra t<u, \quad \text{ or } \quad  t=u \text{ and } [\ul{d}]>_{\text{lex}}[\ul{b}].
$$

Thus, if $\lambda(M')<\lambda(M)$, then $P(M')>_{\rm alex}P(M)$. We now extend $\alex$  to a total order $\tau $ on the minimal generators of $I_c^{(m)}$; when two monomials are associated to the same partition, we employ the normal form to break the tie:  
if $M$ and $M'$ are two monomials with $P(M)=P(M')$ and $M=\prod_{j=1}^t M_{(d_j)}$ and $M'=\prod_{j=1}^t M_{(d_j)}'$ are the respective normal forms, we define \\
\begin{center}
$M'\gg_{\revlex} M$ $\iff$ $\exists\,j\geq 1$ such that $M_{(d_i)}' = M_{(d_i)}$ for all $i=1,\ldots,j-1$ and $M_{(d_j)}' >_{\revlex} M_{(d_j)}$.
\end{center}

\begin{Remark}\label{5.rev}
Let $M,M'$ be monomials of the same degree. If $M,M'$ are squarefree, then $M'\gg_{\revlex}M$ if and only if $M' >_{\revlex}M$, i.e. if and only if $M'$ is larger than $M$ in the ``usual" {\em revlex} order.

However, for non-squarefree monomials this is not true anymore. 
For instance, if $M=x_1x_3^2$ and $M'=x_2^2x_3$, then $M<_{\revlex}M'$ however $M\gg_{\revlex} M'$ because the normal forms are $M=(x_1x_3)x_3$ and $M'=(x_2x_3)x_2$ and $x_1x_3>_{\revlex}x_2x_3$. 
\end{Remark}

We can now define the total order on $\Gc$. 
\begin{definition}\label{tau}
Let $R,c,s,\F$ be as in Setting \ref{4.set}. Fix the order $F_1>F_2>\cdots >F_s$ (see Remark \ref{5.order}). Fix $m\geq 1$. We define the following total order $\tau$ on $\Gc$: for any $M,N$ one sets
$$
M >_{\tau} N \Llra \left\{ \begin{array}{l}
P(M)>_{\rm alex}P(N)\\
 \text{or }  \\
 P(M)=P(N) \text{ and } M\gg_{\revlex} N\\
\end{array}
\right.
$$
\end{definition}

\begin{Remark}\label{5.dec}
Let $R,c,s,\F$ be as in Setting \ref{4.set}, and let $>$ be the total order on $\Gc$ of Definition \ref{tau}. For any $M,N\in \Gc$, if $N>M$, then $\deg(N)\leq \deg(M)$.
\end{Remark}

\begin{Example}
Let $R$ be a polynomial ring over a field, $c=6$, $\F=\{F_1,\ldots,F_{10}\}$ a set of forms such that any 7 of them form a regular sequence. 

Let $M_1=(F_1F_2\cdots F_7)^3$, $M_2=(F_1F_2\cdots F_6F_{10})^3$, $M_3=(F_1\cdots  F_5)^4F_6^3F_7F_8$ and $M_4=F_1F_2F_3F_4F_5(F_6\cdots F_{10})^4$.  Then
\begin{itemize}
\item $P(M_1)=P(M_2)=[3,3,3]$, $P(M_3)=[4,2,2,1]$ and $P(M_4)=[6,1,1,1]$;
\item $M_1,M_2,M_3,M_4$ all lie in $\G_{6,(9)}$, because $|P(M_i)|=9$, for $i=1,2,3,4$;
\item $M_2> M_4$, because $\lambda(P(M_2))=3<\lambda(P(M_4))=4$, so$ P(M_2)>_{\alex}P(M_4)$;
\item $M_4> M_3$,  because $\lambda(P(M_3))=\lambda(P(M_4))=4$ and $P(M_4)>_{\alex}P(M_3)$;
\item $M_1> M_2$,  because $P(M_1)=P(M_2)$ and $M_1\gg_{\rm revlex} M_2$.
\end{itemize}

\end{Example}

We conclude this section with a few facts relative to partitions. For any partition $[d_1,\ldots,d_t]\vdash m$ we define the symbol $d_0:=\infty$. It does not change the original partition (because we do not add $d_0$ to the partition) but it ensures that $d_1<d_0$.\\

\begin{Lemma}\label{6.part1}
For any $[d_1,\ldots,d_t]\in \Pc$, let $i\geq 1$ be any index with $d_i<d_{i-1}$. If $i<t$, then there exists a partition  $[b_1,\ldots,b_u]\in \Pc$ with either $u=t$ or $u=t-1$ such that \\
\begin{center}
$[\ul{b}]>_{\rm alex} [\ul{d}]$ and $b_j=d_j$ for all $j\neq i,t$.
\end{center}
\end{Lemma}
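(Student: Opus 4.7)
The plan is to construct the desired partition $[\ul b]$ by performing a single-unit weight transfer from the last entry $d_t$ to the entry $d_i$. The hypothesis $d_i<d_{i-1}$ provides the room needed to increment $d_i$ while preserving the non-increasing order, and the choice of $d_t$ as donor exploits that it is the smallest entry. The construction naturally splits into two cases according to whether $d_t\geq 2$ or $d_t=1$, which correspond to the two possible values $u=t$ and $u=t-1$ allowed by the conclusion.

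In Case 1, where $d_t\geq 2$, I would set $u:=t$, $b_i:=d_i+1$, $b_t:=d_t-1$, and $b_j:=d_j$ for $j\neq i,t$. Positivity of all $b_j$ is immediate; the non-increasing condition $b_{i-1}\geq b_i$ uses the strict inequality $d_i<d_{i-1}$ to yield $d_{i-1}\geq d_i+1$, while $b_i\geq b_{i+1}$ follows from $d_i\geq d_{i+1}$, and $b_{t-1}\geq b_t$ is trivial. The bound $b_i\leq c$ is a consequence of $d_i<d_{i-1}\leq c$. Since $[\ul b]$ has the same length as $[\ul d]$, agrees with it on positions $1,\ldots,i-1$, and has $b_i>d_i$, we obtain $[\ul b]>_{{\rm lex}}[\ul d]$, hence $[\ul b]>_{\alex}[\ul d]$.

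In Case 2, where $d_t=1$, I would set $u:=t-1$, $b_i:=d_i+1$, and $b_j:=d_j$ for the remaining indices $j\in\{1,\ldots,t-1\}\setminus\{i\}$, thereby dropping the last entry. A direct check shows $\sum_j b_j=m-d_t+1=m$, and the non-increasing property together with the bound $b_i\leq c$ is verified exactly as in Case 1 (the potential boundary check at position $t-1$ either reduces to $b_{i-1}=d_{i-1}\geq d_i+1=b_i$ when $i=t-1$, or is unchanged from $[\ul d]$ otherwise). The alex comparison is automatic: since $u=t-1<t$, the definition of the alex order gives $[\ul b]>_{\alex}[\ul d]$.

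The main technical check, and the one place where the strict descent hypothesis $d_i<d_{i-1}$ plays its key role, is the preservation of the non-increasing property at position $i-1$ after incrementing $d_i$; every remaining verification (positivity, total sum, upper bound $b_i\leq c$, and the alex comparison) follows mechanically from the construction.
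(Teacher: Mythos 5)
Your construction is exactly the one the paper uses: transfer one unit from $d_t$ to $d_i$, dropping the last entry if $d_t=1$; the strict descent $d_i<d_{i-1}$ is precisely what guarantees the non-increasing condition survives the increment. The paper states the verification is "immediately checked"; your write-up just supplies those checks explicitly, so the two proofs coincide in substance.
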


\begin{proof}
We define $[b_1,\ldots,b_u]$ as follows: if $d_t=1$, then $u=t-1$; if $d_t>1$, then $u=t$. The entries are $b_j= d_j$ for all $j\neq i,t$, and $b_{i}=d_{i}+1$. If $d_t>1$, we also set $b_{t} = d_{t}-1$. 
It is immediately checked that $[\ul{b}]$ satisfies the conditions (i)--(iii) stated before Corollary \ref{6.surj}, thus $[\ul{b}]\in \Pc$.

\end{proof}

Next, we define a number, the {\em index of overlap}, which is a keystone for our proof and applications of Theorem \ref{6.delta}. It detects the longest initial strand that a partition has in common with a strictly larger partition. 
\begin{definition}\label{6.i0}
Fix a total order $>$ on the elements of $\Pc$. 
For any partition $[\ul{d}]\in \Pc$, the {\em index of overlap} of $[\ul{d}]$ is
$$
\begin{array}{ll}
\max\{j\,\mid\,\exists\,[b_1,\ldots,b_u]\in \Pc & \text{ with }[\ul{b}]> [\ul{d}] \\
& \qquad\text{ and }d_h=b_h \text{ for all }h=1,\ldots,j-1\}.
\end{array}$$

When the total order is $>_{\alex}$ we use the notation
$$
i_0([\ul{d}]) := \max\{j\,\mid\,\exists\,[\ul{b}]\in \Pc \text{ with }[\ul{b}]>_{\alex} [\ul{d}] \text{ and }d_h=b_h \text{ for } 1\leq h \leq j-1\}.
$$
\end{definition}

\begin{Example}
Let  $c=5$, $m=11$ and $\Po=\Po_{\leq 5}(11)$. Then (a) $i_0([5,3,3])=2$ while (b) $i_0([4,4,1,1,1])=3$.
\end{Example}
\begin{proof}
(a) The only partitions in $\Po$ larger than $[5,3,3]$ in $\rm alex$ order are $[5,5,1]$, $[5,4,2]$. So, there is a strictly larger partition with the same first entry as $[\ul{d}]$, e.g. $[5,5,1]$, but there is no strictly larger partition whose first two entries are $[5,3]$, then $i_0([5,3,3])=2$.

(b) In ${\rm alex}$ order, $[4,4,3]$ is strictly larger than $[4,4,1,1,1]$ and shares with it the first two entries, thus $i_0([4,4,1,1,1])\geq 3$. We prove equality. Assume by contradiction there exists a partition $[\ul{d}]\in \Po$ of the form $[\ul{d}]=[4,4,1,d_4,\ldots,d_t]$ with $[\ul{d}]>_{\alex}[4,4,1,1,1]$. Being a partition, it must have $1\leq d_t \leq d_{t-1}\leq \cdots \leq d_4\leq 1$, i.e. $d_j=1$ for all $j\geq 4$. Since the sum of the entries is $m=11$, it follows that $[\ul{d}]=[4,4,1,1,1]$,  contradicting that $[\ul{d}]>_{\alex}[4,4,1,1,1]$.

\end{proof}

We say that a partition $[d_1,\ldots,d_t]$ has a {\em jump} in position $j$ (or at $j$) if $d_j<d_{j-1}$. Since we follow the convention that $d_0=\infty$, then any partition has a jump in the first position.
 
We will make use of the somewhat surprising following fact: once we fix the total order {\em alex}, there is {\em always} a jump in $[\ul{d}]$ at the index of overlap, and, more precisely, {\em $i_0$ is the index of the last jump in the truncated vector $[d_1,\ldots,d_{t-1}]$}. 

\begin{Lemma}\label{6.part2}
For any $[\ul{d}]\in \Pc$, set $i_0=i_0([\ul{d}])$ and $t=\lambda([\ul{d}])$. Then 
\begin{enumerate}
\item $i_0<t$ and $d_{i_0}<d_{i_0-1}$;
\item If $d_t< d_{i_0}$ then $d_{j}=d_{i_0}$ for all $i_0\leq j \leq t-1$. 
\end{enumerate}
Equivalently, $i_0([\ul{d}]) = \max\,\{j=1\ldots,t-1 \,|\,d_j <d_{j-1}\}$.
\end{Lemma}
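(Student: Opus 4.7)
The plan is to prove the equivalent reformulation
\[
i_0([\ul{d}]) = \max\{j = 1,\ldots,t-1 \mid d_j < d_{j-1}\},
\]
from which assertions (1) and (2) will follow at once. Denote the right-hand side by $j^*$; by the convention $d_0 = \infty$, the index $j = 1$ always belongs to the set, so $j^*$ is well-defined whenever $t \geq 2$ (the case $t = 1$ is vacuous, as $[m]$ is the $\alex$-maximum of $\Pc$). The key preliminary observation, forced by the maximality of $j^*$, is that the truncated tail is constant: $d_{j^*} = d_{j^* + 1} = \cdots = d_{t-1}$.

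To establish $i_0 \geq j^*$, I would apply Lemma~\ref{6.part1} with $i = j^*$: since $d_{j^*} < d_{j^* - 1}$ and $j^* < t$, the lemma furnishes $[\ul{b}] \in \Pc$ with $[\ul{b}] >_{\alex} [\ul{d}]$ and $b_h = d_h$ for all $h \neq j^*, t$; in particular $b_h = d_h$ for $h = 1, \ldots, j^* - 1$, so $j^*$ is an admissible value in the definition of $i_0$.

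The core of the proof is the reverse inequality $i_0 \leq j^*$, which I would argue by contradiction. Suppose $[\ul{b}] \in \Pc$ satisfies $[\ul{b}] >_{\alex} [\ul{d}]$ together with $b_h = d_h$ for every $h \leq j^*$, and write $u = \lambda([\ul{b}])$, so $u \leq t$ by the definition of $\alex$. If $u < t$, then using $b_h \leq b_{j^*} = d_{j^*}$ for $h > j^*$ together with the constant-tail gives
\[
(t - 1 - j^*)\, d_{j^*} + d_t \;=\; \sum_{h = j^* + 1}^{t} d_h \;=\; \sum_{h = j^* + 1}^{u} b_h \;\leq\; (u - j^*)\, d_{j^*} \;\leq\; (t - 1 - j^*)\, d_{j^*},
\]
contradicting $d_t \geq 1$. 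If $u = t$, then $[\ul{b}] >_{\lex} [\ul{d}]$ and the smallest index $k$ with $b_k \neq d_k$ satisfies $k \geq j^* + 1$ and $b_k > d_k$; for $k \leq t-1$, the constant-tail gives $d_k = d_{j^*}$ while $b_k \leq b_{j^*} = d_{j^*}$, a contradiction, and for $k = t$, the equalities $b_h = d_h$ for $h < t$ combined with $\sum b_h = \sum d_h = m$ force $b_t = d_t$, again contradicting $b_k > d_k$.

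Combining both inequalities yields $i_0 = j^*$. Assertion (1) then follows since $j^* \leq t-1 < t$ and $d_{i_0} < d_{i_0 - 1}$ by the definition of $j^*$; assertion (2) follows from the constant-tail observation, which gives $d_h = d_{i_0}$ for all $i_0 \leq h \leq t - 1$ irrespective of the value of $d_t$. The principal obstacle is the subcase $u = t$ above: a pure mass-conservation bound does not suffice there, and one has to run a delicate position-by-position $\lex$ comparison, which is kept tractable only by the constant-tail observation.
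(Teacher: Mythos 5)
Your proof is correct. The main difference from the paper is structural: the paper establishes assertions (1) and (2) directly (using a sum-conservation argument for $i_0 < t$, a monotonicity argument for $d_{i_0} < d_{i_0-1}$, and Lemma~\ref{6.part1} applied at $i_1 = \min\{j : d_j < d_{i_0}\}$ for (2)), and then notes the reformulation as a corollary; you instead prove the reformulation $i_0 = j^*$ directly and read off (1) and (2) from it. Both routes lean on Lemma~\ref{6.part1} in an essential way. Your route buys something real: the paper's justification of $d_{i_0} < d_{i_0-1}$ asserts that any witness $[\ul{b}]$ agreeing with $[\ul{d}]$ up to index $i_0-1$ must satisfy $b_{i_0} > d_{i_0}$, which is immediate when $\lambda([\ul{b}]) = t$ (lex comparison) but is not addressed when $\lambda([\ul{b}]) < t$, where $b_{i_0} < d_{i_0}$ is a priori allowed by the definition of $>_{\alex}$. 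Your $u < t$ subcase, with the sum bound $(t-1-j^*)d_{j^*} + d_t \leq (u-j^*)d_{j^*}$ derived from the constant tail, handles exactly this possibility and is, to my eye, the genuinely new ingredient; the $u = t$ subcase is the position-by-position comparison that the paper also uses. One small caveat worth a line: the equality $i_0 = j^*$ (and the lemma itself) tacitly assumes $[\ul{d}]$ is not the $\alex$-maximum of $\Pc$, since otherwise the defining set for $i_0$ is empty; this is also why your appeal to Lemma~\ref{6.part1} with $i = j^*$ is legitimate (if $j^*=1$ and $d_1 = c$ the lemma's construction would leave $\Pc$, but then $[\ul{d}]$ is the maximum and $i_0$ is undefined anyway).
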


\begin{proof} (1) We show that $i_0<t$ and $d_{i_0}<d_{i_0-1}$. Assume $i_0=t$, then there exists a partition $[b_1,\ldots,b_u] \in \Pc$ with $[\ul{b}]>_{\rm alex}[\ul{d}]$ and $b_j=d_j$ for all $j=1,\ldots,t-1$. In particular one has $u\geq t-1$. Since $[\ul{b}]>_{\rm alex}[\ul{d}]$, then one also has $u\leq t$. Now, if $u=t-1$, then $m=\sum_{j=1}^{t-1}b_j = \sum_{j=1}^{t-1}d_j$. Since $[\ul{d}]\vdash m$, then this implies $d_t=0$, which is a contradiction. Therefore $u=t$, but since $m=\sum_{j=1}^t b_j = \sum_{j=1}^t d_j$ and $b_j=d_j$ for all $j=1,\ldots,t-1$, then also $b_t=d_t$, thus $[\ul{b}]=[\ul{d}]$, which is a contradiction. 

Next, assume by contradiction that $d_{i_0}=d_{(i_0-1)}$. Let $[\ul{b}] \vdash m$ be a partition in $\Pc$ with $[\ul{b}]>_{\rm alex}[\ul{d}]$   and $d_h=b_h$ for $1\leq h \leq i_0-1$. Since $[\ul{b}]>_{\rm alex}[\ul{d}]$, then $b_{i_0}>d_{i_0}=d_{i_0-1}=b_{i_0 -1 }$, which contradicts the fact that the entries of $[\ul{b}]$ are non-increasing (see part (iii) in the discussion before Corollary \ref{6.surj}).

(2) Let us define $ i_1:=\min\{j\,\mid\,d_j<d_{i_0}\}>i_0$. Assume by contradiction that $i_1 <t$, we can then apply Lemma \ref{6.part1} with $i=i_1$ to obtain a partition $[\ul{b}]$ as in Lemma \ref{6.part1} -- in particular it has $b_j=d_j$ for all $j\neq i_1,t$. Then $[\ul{b}] \in \Pc$, and $[\ul{b}]>_{\rm alex}[\ul{d}]$ and $[\ul{b}]$, and $[\ul{d}]$ agree up to the index $i_1-1 \geq i_0$. This contradicts the definition of $i_0$.

\end{proof}

Lemma \ref{6.part2} makes it very fast to determine $i_0([\ul{d}])$ for any given partition $[d_1,\ldots,d_t]$, see for instance the proof of Theorem \ref{7.strands}(2), or Examples \ref{7.extop} and \ref{7.last}.

\section{A generalization of ideals with linear quotients}
Monomial ideals with linear quotients have been introduced by Herzog and Takayama \cite{HT}. Later this notion was generalized by Conca and Herzog to modules with linear quotients \cite{CH}. In general, ideals (or modules) with linear quotients have been studied  for their nice minimal free resolutions (constructed by iterated mapping cones), see for instance \cite{SZ} or \cite{SV}. As an example of their application, often times one proves that an ideal $I$ has a linear resolution by showing it is generated in a single degree and $I$ has linear quotients (i.e. $I$ is a module with linear quotients).


Here, we propose a generalization of linear quotients:
\begin{definition}\label{7.ci}
Let $R$ be a polynomial ring over a field, $I$ a homogeneous ideal, we say that $I$ has {\em c.i. quotients} if there exists a total order $h_1>h_2>\cdots >h_r$ on a minimal generating set $\{h_1,\ldots,h_r\}$ of $I$ such that for any $1\leq i \leq r-1$ the ideal
$
(h_1,\ldots,h_i):h_{i+1}
$ 
is a complete intersection ideal.  
When this is the case, following Herzog and Takayama, we write $\set(h_{i+1})$ for a minimal generating set of $(h_1,\ldots,h_i):h_{i+1}$. 

If additionally for every $i$ there exists $\delta\in \ZZ_+$ such that all forms of $\set(h_i)$ have the same degree $\delta$, we say that $I$ has {\em $\delta$-c.i. quotients}. 
\end{definition}

When $\delta=1$, the definition recovers the notion of linear quotients: 
\begin{Example}
(1) An ideal $I$ has linear quotients if and only if $I$ has 1-c.i. quotients.\\
(2) Every complete intersection ideal has c.i. quotients. 
\end{Example}

Recall that for any integer $a$ and any graded $R$-module $M$, one denotes by $M(a)$ the module $M$ shifted by $a$, i.e. whose $i$-th graded component is $[M(a)]_i = [M]_{i+a}$. 
A careful study of the proof of \cite[Lemma~1.5]{HT} shows that this proof does not depend on the ideal being monomial neither on the value of $\delta \geq1$ (the key point in their proof is that a certain mapping cone gives the {\em minimal} free resolution of the ideal, and the minimality of such resolution follows by a short arithmetic computation whose validity does not depend on $I$ being monomial or taking $\delta=1$).

Therefore, following the same proof of \cite[Lemma~1.5]{HT} -- i.e. by taking iterated mapping cones -- one obtains the following Eliahou--Kervaire--type formula for the graded Betti numbers of ideals with $\delta$-c.i. quotients.
\begin{Proposition}\label{6.rks}
Let $H=\{h_1,\ldots,h_r\}$ be a minimal generating set of $I\subseteq R$ such that $\deg(h_i)\leq \deg(h_{i+1})$ for every $i$, and assume $I$ has $\delta$-c.i. quotients with respect to $H$. Then 
$$
\beta_{i,j}(R/I) = \sum_{1\leq \ell \leq r,\, \deg(f_{\ell})=j+i-1} \binom{|\set(f_{\ell}|}{i-1}.
$$
\end{Proposition}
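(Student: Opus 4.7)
The plan is to emulate the proof of \cite[Lemma~1.5]{HT} via iterated mapping cones, replacing the degree bookkeeping for $\delta=1$ by the one appropriate for arbitrary $\delta$. Set $I_0=(0)$ and $I_\ell=(h_1,\ldots,h_\ell)$ for $1\leq \ell\leq r$, so $I_r=I$. For each $\ell$, I would use the short exact sequence
$$0 \lra (R/J_\ell)(-\deg h_\ell) \stackrel{\cdot h_\ell}{\lra} R/I_{\ell-1} \lra R/I_\ell \lra 0,$$
where $J_\ell:=I_{\ell-1}:h_\ell$. By the $\delta$-c.i. quotients hypothesis, $J_\ell$ is a complete intersection generated by $|\set(h_\ell)|$ forms, all of the same degree $\delta$, so its minimal free resolution is the Koszul complex $K_\bullet$ whose $k$th term is $R^{\binom{|\set(h_\ell)|}{k}}(-k\delta)$.

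Next, I would construct the free resolution $F_\bullet^{(\ell)}$ of $R/I_\ell$ by induction: take $F_\bullet^{(\ell)}$ to be the mapping cone of a lift $K_\bullet(-\deg h_\ell)\to F_\bullet^{(\ell-1)}$ of the multiplication-by-$h_\ell$ map at the level of modules. Counting ranks in each homological degree $i$ and internal degree $j$, the $\ell$th mapping-cone step contributes exactly $\binom{|\set(h_\ell)|}{i-1}$ new generators in homological degree $i$ and internal degree $\deg(h_\ell)+(i-1)\delta$. Summing these contributions over $\ell$ reproduces the claimed formula, once one identifies the index $j$ in the statement with this total shift.

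The main obstacle is proving minimality of the resulting resolution, i.e. that no cancellation occurs in any of the mapping cones. This reduces to verifying that every lift $K_\bullet(-\deg h_\ell)\to F_\bullet^{(\ell-1)}$ has all entries in the homogeneous maximal ideal $\m$. Here I would exploit the assumption $\deg h_1\leq \cdots \leq \deg h_r$ together with the uniform degree $\delta$ of the generators of each $J_\ell$: a nonzero constant in a lift would force a matching of internal degrees of the form $\deg(h_\ell)+k\delta=\deg(h_{\ell'})+k'\delta$ for some $\ell'<\ell$ and some $k,k'$ recording the inductive shifts, and I would rule this out by tracking degrees along the inductive construction and comparing with the ordering on the $h_\ell$.

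I expect this minimality bookkeeping to be the only nontrivial point; the algebraic core (existence of the mapping cone, rank count, behavior of Koszul complexes) is formal and degree-independent. As noted in the paper, Herzog--Takayama's argument is insensitive to whether $\delta=1$ or whether the generators are monomials -- both appear only in the degree count -- so the plan is to transcribe their argument, substituting $k\delta$ wherever they use $k$, and verify the resulting arithmetic inequality in the general $\delta\geq 1$ setting. The formula of Proposition~\ref{6.rks} then follows directly from the shape of the minimal resolution so constructed.
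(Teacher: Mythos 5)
Your proposal is exactly the paper's approach: the paper does not write out a proof but simply asserts that Herzog--Takayama's iterated mapping-cone argument for \cite[Lemma~1.5]{HT} carries over, and the outline you give (short exact sequences with complete-intersection quotients $J_\ell$, Koszul resolutions of each $R/J_\ell$, iterated mapping cones, and minimality via degree comparison using $\deg h_1\leq\cdots\leq\deg h_r$) is precisely what that reduction amounts to. Your minimality check is the right one: at homological degree $i$ the comparison map has source in degree $\deg h_\ell + i\delta$ and target in degrees $\deg h_{\ell'} + (i-1)\delta$ with $\ell'<\ell$, so a unit entry would force $\deg h_{\ell'} = \deg h_\ell + \delta > \deg h_\ell$, contradicting the ordering.
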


For instance, if $I$ has $\delta$-c.i. quotients with respect to $H$, and $h\in H$ has $|\set(h)|=3$, then $h$ contributes to the minimal free resolution of $R/I$ with a copy of $R(-(\delta\deg(h) ))$ in homological degree 1, with $R(-(\delta(\deg(h) + 1)))^3$ in homological degree 2, with $R(-(\delta(\deg(h) + 2)))^3$ in homological degree 3, with a copy of $R(-(\delta(\deg(h) + 3)))$ in homological degree 4, and no contribution in higher homological degrees. 
\bigskip

From Proposition \ref{6.rks} one immediately obtains the following observation:
\begin{Remark}\label{6.set}
Assume $I$ is an ideal with $\delta$-c.i. quotients with respect to a minimal generating set $H$ satisfying $\deg(h_i)\leq \deg(h_{i+1})$ for every $i$. 

Then, knowing $|\set(h)|$ for any $h\in H$ is equivalent to knowing the entire Betti table of $R/I$.
\end{Remark}

We now identify a special type of Betti tables.
\begin{definition}\label{7.Koszul}
Let $R$ be a polynomial ring over a field, $1\leq n_1<n_2<\cdots<n_r$ be integers and $I$ a homogeneous ideal generated in degrees $n_1,\ldots,n_r$. 

We say that $I$ has a {\em Koszul stranded Betti table} if there exists a $\delta \in \ZZ_+$ such that
$$
\beta_{i,j}(R/I)\neq 0 \text{ only if }j \in \{n_h + \delta(i-1)\,\mid\,h=1,\ldots,r\}.
$$
If $\delta=1$, we say that $I$ has a {\em linearly stranded Betti table}.
\end{definition}

So, for instance, if $I$ has a linearly stranded Betti table, then its Betti table appears as the union of linear strands starting at the degrees of the minimal generators of $I$.

Observe also that if $I$ has a Koszul stranded Betti table, then for any $i\geq 1$ there are at most $r$ values of $j$ such that $\beta_{i,j}(R/I)\neq 0$, where $r$ is as in Definition \ref{7.Koszul}. 

From Proposition \ref{6.rks} we immediately obtain the following result. 
\begin{Corollary}\label{6.Koszul}
If $I$ has $\delta$-c.i. quotients then $I$ has a Koszul stranded Betti table.
\end{Corollary}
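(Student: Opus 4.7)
The plan is to read off the conclusion directly from the Eliahou--Kervaire--type formula of Proposition \ref{6.rks}, once we unpack the purity of the Koszul contributions that the $\delta$-c.i.\ hypothesis forces on the iterated mapping cone.

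First, I would examine the iterated mapping cone construction underlying Proposition \ref{6.rks}. For each generator $h_\ell$ in the ordered minimal generating set $H$, the colon ideal $(h_1,\ldots,h_{\ell-1}):h_\ell$ is a complete intersection minimally generated by the forms of $\set(h_\ell)$, all of degree $\delta$ by hypothesis. Hence the Koszul complex on $\set(h_\ell)$ is pure: at homological position $k$ it is $R(-\delta k)^{\binom{|\set(h_\ell)|}{k}}$. This is the crucial rigidity provided by $\delta$-c.i.\ quotients (beyond plain c.i.\ quotients, where each Koszul module can spread across several internal degrees).

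Next, I would track how this Koszul complex sits inside the minimal free resolution of $R/I$. Attaching $h_\ell$ in the iterated mapping cone shifts homological degree by $1$ and internal degree by $n_{(\ell)}:=\deg(h_\ell)$, so $h_\ell$ contributes exactly $R(-(n_{(\ell)}+\delta(i-1)))^{\binom{|\set(h_\ell)|}{i-1}}$ at homological position $i\geq 1$ and nothing at any other internal degree. Summing over $\ell$ and using that $n_{(\ell)}\in\{n_1,\ldots,n_r\}$, I conclude that $\beta_{i,j}(R/I)\neq 0$ forces $j=n_h+\delta(i-1)$ for some $h$, which is precisely the Koszul stranded condition of Definition \ref{7.Koszul}.

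There is no substantive obstacle here: the real content (minimality of the iterated mapping cone and the binomial count) is packaged in Proposition \ref{6.rks}, and the only extra ingredient required for the corollary is the observation that $\delta$-c.i.\ quotients make each Koszul contribution pure in internal degree, thereby confining every nonzero entry of the Betti table to one of the $r$ diagonals of slope $\delta$ emanating from $(0,n_h)$.
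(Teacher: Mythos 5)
Your argument is correct and follows essentially the same route as the paper: the paper deduces Corollary \ref{6.Koszul} directly from Proposition \ref{6.rks}, and you simply unpack the iterated mapping cone behind that proposition, observing that each Koszul complex on $\set(h_\ell)$ is pure of slope $\delta$, which confines nonzero Betti numbers to the diagonals $j = n_h + \delta(i-1)$. Nothing genuinely different in method, and no gaps.
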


\begin{Example}\label{6.exa}
Let $I$ be a Cohen--Macaulay ideal of height 3 with $2$-c.i. quotients generated in degrees $16$, $20$ and $24$. Let $J$ be Cohen-Macaulay of height 4 with $3$-c.i. quotients, generated in degrees $12$ and $18$.
(Notice that ideals with these properties exist, see Examples \ref{6.exa2} and \ref{7.exa2}).

Then $R/I$ and $R/J$ have the following Koszul stranded Betti tables
$$
\beta_I:=\begin{array}{r | c c c c}
\beta(R/I)    & 0 & 1 & 2 & 3 \\\hline
 0 & 1 &  &  &  \\
 \vdots &  & &  \\
 15 && \star & & \\
 16 && 	& \star & \\
 17 &&  	& 	& \star \\
  18 &&  	& 	& \\
  19 && \star & & \\
 20 && 	& \star & \\
 21 &&  	& 	& \star \\
 22 &&  	& 	& \\
 23 && \star & & \\
 24 && 	& \star & \\
 25 &&  	& 	& \star \\
\end{array}
\qquad \text{ and }\qquad \beta_J:=\begin{array}{r | c c c c c}
\beta(R/J)    & 0 & 1 & 2 & 3 & 4\\\hline
 0 & 1 &  &  & &   \\
 \vdots &  & & & &   \\
11 && \star & &  & \\
12 &&&&&\\
13 && &  \star & &  \\
14 &&&&&\\
15 && &  & \star &  \\
16 &&&&&\\
17 &&\star && &  \star\\
18 &&&&&\\
19 && &  \star & &  \\
20 &&&&&\\
21 && &  & \star &  \\
22 &&&&&\\
23 &&&& &  \star\\
\end{array}
$$
where the stars mark the only possible non-zero entries in each Betti table.
\end{Example}

Before we prove the main result of this section, we fix some notation and prove a couple of auxiliary results. Let $R=k[x_0,\ldots,x_n]$ and $\F$ be a set of forms in $R$; for any subset $B\subseteq \F$ we let $B^C := \F\setminus B$ and define the ideal $\a_B$ as
$$
\a_B := (F\in \F\,\mid\, F \in B) \subseteq R.
$$
\begin{Lemma}\label{6.colon}
Let $R$ be a polynomial ring over a field, let $\F$ be a set of forms such that any two of them form a regular sequence. 
For any two monomials $M,N$ in $\F$, if $\supp(N)\not \subseteq \supp(M)$, then $N:_RM \subseteq \bigcap_{F\in \supp(N)\setminus \supp(M)} (F)$. 

In particular, $N:_RM \subseteq  \a_{\supp(N)\setminus \supp(M)} \subseteq \a_{\supp(M)^C} \cap \a_{\supp(N)}$.
\end{Lemma}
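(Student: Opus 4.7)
The plan is to reduce the statement to the fact that the polynomial ring $R$ is a UFD and that the elements of $\F$ are pairwise coprime. The assumption that any two forms of $\F$ constitute a regular sequence is the main ingredient for this coprimality: in a polynomial ring over a field, two forms $F,G$ form a regular sequence if and only if $\gcd(F,G)=1$ (as a unit), since otherwise a common factor would produce a zero-divisor on $R/(F)$. In particular, for any $F\in \F$ and any $G\in \F\setminus\{F\}$, we have $\gcd(F,G)=1$, and consequently $\gcd(F,M')=1$ for every monomial $M'$ in $\F$ whose support does not contain $F$ (being a product of powers of elements each coprime to $F$).

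With this in hand, I would fix $r\in N:_RM$ and an arbitrary $F\in \supp(N)\setminus \supp(M)$, and show $F$ divides $r$. By definition, $rM=sN$ for some $s\in R$. Since $F\in \supp(N)$, $F$ divides $N$, hence $F$ divides $rM$. On the other hand, $F\notin \supp(M)$ and the preliminary observation give $\gcd(F,M)=1$. Applying the standard UFD argument (Euclid's lemma applied to each irreducible factor of $F$) yields $F\mid r$, i.e.\ $r\in(F)$. Since $F$ was arbitrary in $\supp(N)\setminus \supp(M)$, this gives $r\in\bigcap_{F\in \supp(N)\setminus \supp(M)}(F)$, which is the first claim.

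For the ``in particular'' part, the inclusion $\bigcap_{F\in \supp(N)\setminus \supp(M)}(F)\subseteq \a_{\supp(N)\setminus \supp(M)}$ is immediate because each principal ideal $(F)$ with $F\in \supp(N)\setminus \supp(M)$ is contained in the sum $\a_{\supp(N)\setminus \supp(M)}=(F\,|\,F\in \supp(N)\setminus \supp(M))$. The final containment $\a_{\supp(N)\setminus \supp(M)}\subseteq \a_{\supp(M)^C}\cap \a_{\supp(N)}$ follows from the elementary set-theoretic inclusions $\supp(N)\setminus \supp(M)\subseteq \supp(M)^C$ and $\supp(N)\setminus \supp(M)\subseteq \supp(N)$, which translate to the corresponding inclusions of the generated ideals.

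There is no serious obstacle here: the only subtle point is carefully citing the regular-sequence hypothesis to get pairwise coprimality in the polynomial ring (rather than, say, appealing to a unique monomial support, which would only give uniqueness of factorization into the chosen generators $\F$, not primality of individual $F\in \F$). Once this coprimality is in hand, the proof is a short UFD divisibility argument.
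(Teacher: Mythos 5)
Your proof is correct, and it takes essentially the same route as the paper's: both reduce the statement to the observation that the regular-sequence hypothesis forces $\gcd(F,M)=1$ in the UFD $R$, from which $(N):_RM\subseteq (F):_RM=(F)$ follows since $F\mid N$. The paper first cancels out the common factor $O=\gcd(M,N)$ to reduce to $N':M'$ and then invokes $\grade(F,M')=2$, whereas you bypass the cancellation and apply the Euclid's-lemma divisibility argument directly to $rM=sN$; this streamlining is harmless and, if anything, makes the argument slightly more self-contained, since you do not need to invoke the unique monomial support of $\F$ to define a $\gcd$ of monomials in $\F$. Your remark that the regular-sequence assumption is needed to get coprimality of the forms as elements of the UFD (rather than merely uniqueness of the expression as a monomial in $\F$) is exactly the right point to emphasize.
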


\begin{proof}
Let $F\in \supp(N)\setminus \supp(M)$. Since $\F$ allows a unique monomial support (by Proposition \ref{gcd}), then in particular given any two monomials in $\F$ there is a unique monomial in $\F$ of largest degree dividing both monomials -- we denote it the $\gcd$ of the two monomials. 
Let $O:=\gcd(M,N)$, write $M=OM'$ and $N=ON'$, then by cancellation $N:M = N':M'$. By assumption, we have $F\in \supp(N')$ and $F\notin \supp(M')$. By assumption on $\F$, it follows that $\gcd(F,M')=1$, thus $\grade(F,M')=2$. Therefore
$$
N:M = N':M' \subseteq (F):M' = (F).
$$

\end{proof}


\begin{Lemma}\label{6.supp0}
Let $R,c,s,\F$ be as in Setting \ref{4.set}. 
Let $M,M'$ be monomials in $\Gc$ with the same associated partition $P(M')=P(M)=[d_1,\ldots,d_t]$. 
Let $M=\prod_{j=1}^t M_{(d_{j})}$ and $M' = \prod_{j=1}^t M_{(d_j)}'$ be their respective normal forms in Notation \ref{5.1}. 

\begin{enumerate}
\item If $h_0$ is the largest integer $h$ such that $M_{(d_j)}'=M_{(d_j)}$ for all $j<h$, then 
$$
M' : M \subseteq \a_{\supp(M_{d_{h_0}})^C}.
$$
\item If $\supp(M_{(d_i)})=\supp(M_{(d_i)}')$, then for every $j$ with $d_j=d_i$ we have $\supp(M_{(d_j)})=\supp(M_{(d_j)}')=\supp(M_{(d_i)})$.
\end{enumerate}
\end{Lemma}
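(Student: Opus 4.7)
The argument splits into two largely independent pieces, with part (2) being a quick combinatorial observation and part (1) a clean reduction to Lemma \ref{6.colon}.

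For part (2), the plan is to exploit that the entries $d_1\ge d_2\ge\cdots\ge d_t$ of the partition are non-increasing, so the set of indices $\{k \mid d_k = d_i\}$ is a contiguous block. For any such $k$, the normal form gives the inclusions $\supp(M_{(d_k)})\subseteq \supp(M_{(d_{i})})$ (or the reverse, depending on the relative position of $k$ and $i$), while on the other hand $|\supp(M_{(d_k)})| = s-c+d_k = s-c+d_i = |\supp(M_{(d_i)})|$. Equal cardinalities plus one containment force $\supp(M_{(d_k)}) = \supp(M_{(d_i)})$, and the same reasoning applied to $M'$ gives $\supp(M_{(d_k)}') = \supp(M_{(d_i)}')$. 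Combining with the hypothesis $\supp(M_{(d_i)})=\supp(M_{(d_i)}')$ finishes part (2).

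For part (1), I would first factor off the common initial block. Set
\[
A := \prod_{j<h_0} M_{(d_j)} = \prod_{j<h_0} M_{(d_j)}', \quad B := \prod_{j\ge h_0} M_{(d_j)}, \quad B' := \prod_{j\ge h_0} M_{(d_j)}',
\]
so that $M=AB$ and $M'=AB'$. Since $R$ is a domain, cancellation gives $M':_R M = B':_R B$, and it suffices to show $B':_R B \subseteq \a_{\supp(M_{(d_{h_0})})^C}$.

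The key observation is that $\supp(B) = \supp(M_{(d_{h_0})})$: for every $j\ge h_0$ the nesting condition in the normal form gives $\supp(M_{(d_j)})\subseteq \supp(M_{(d_{h_0})})$, so Remark \ref{3.rem} yields $\supp(B)=\bigcup_{j\ge h_0}\supp(M_{(d_j)})=\supp(M_{(d_{h_0})})$. By definition of $h_0$ and the uniqueness of the normal form, $M_{(d_{h_0})}'\neq M_{(d_{h_0})}$; since these are squarefree monomials in $\F$ of the same cardinality $s-c+d_{h_0}$, neither support can contain the other, in particular $\supp(M_{(d_{h_0})}')\not\subseteq \supp(M_{(d_{h_0})})=\supp(B)$, whence $\supp(B')\not\subseteq \supp(B)$. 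Applying Lemma \ref{6.colon} to the pair $(B',B)$ gives
\[
B':_R B \;\subseteq\; \a_{\supp(B')\setminus \supp(B)} \;\subseteq\; \a_{\supp(B)^C} \;=\; \a_{\supp(M_{(d_{h_0})})^C},
\]
completing part (1). The main technical point to be careful about is verifying the identity $\supp(B)=\supp(M_{(d_{h_0})})$, which is what turns a general colon bound from Lemma \ref{6.colon} into the precise statement involving $M_{(d_{h_0})}$; everything else is formal.
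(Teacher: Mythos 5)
Your proof is correct and follows essentially the same route as the paper: cancel the common prefix $\prod_{j<h_0}M_{(d_j)}$, observe that the support of the remaining tail equals $\supp(M_{(d_{h_0})})$, use the maximality of $h_0$ together with equal cardinalities to conclude $\supp(B')\not\subseteq\supp(B)$, and then invoke Lemma \ref{6.colon}; part (2) is the same cardinality-plus-containment argument the paper uses. The only difference is that you spell out the identification $\supp(B)=\supp(M_{(d_{h_0})})$ via Remark \ref{3.rem} and the nesting of supports, whereas the paper treats it as immediate.
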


\begin{proof} 
(1) By cancellation we have 
$M':M = N':N$, where  $N=\prod_{j\geq h_0} M_{(d_j)}$ and $N' = \prod_{j\geq h_0} M_{(d_j)}'$.
Since $\supp(N)=\supp(M_{(d_{h_0})})$ and $\supp(N')=\supp(M_{(d_{h_0})}')$ are distinct squarefree monomials (by definition of $h_0$) and $|\supp(N)|=|\supp(N')|$ (because $P(N)=P(N')$), then
$\supp(N')\not\subseteq \supp(N)$.
The statement now follows by Lemma \ref{6.colon}.

(2) The assumption that $d_j=d_i$ implies that $\supp(M_{(d_j)})=\supp(M_{(d_i)})$ and $\supp(M_{(d_j)}') = \supp(M_{(d_i)}')$. 

\end{proof}

We prove a first useful inclusion used in Theorem \ref{6.delta}.
\begin{Proposition}\label{6.supp}
Let $R,c,s,\F$ be as in Setting \ref{4.set}. Let $[\ul{b}]$ and $[\ul{d}]$ be two partitions in $\Pc$ with $[b_1,\ldots,b_u]>_{\rm alex}[d_1,\ldots,d_t]$. Let $i$ be the integer such that $b_j=d_j$ for all $j=1,\ldots,i-1$ and $b_i>d_i$. 

For any $M\in \Gc$ with $P(M)=[\ul{d}]$ one has
$$
\left(N \in \Gc\,\mid\, P(N)=[\ul{b}]\right) : M \subseteq \a_{\supp(M_{d_{i}})^C}.
$$

\end{Proposition}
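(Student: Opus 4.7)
For each fixed $N \in \Gc$ with $P(N) = [\ul{b}]$ I would first establish the principal-ideal inclusion $(N):M \subseteq \a_{\supp(M_{(d_i)})^C}$ by a cancellation argument in the spirit of Lemma \ref{6.supp0}, and then assemble the individual inclusions to obtain the full ideal colon. Writing $M = M_{(d_1)} \cdots M_{(d_t)}$ and $N = N_{(b_1)} \cdots N_{(b_u)}$ in normal form (per Notation \ref{5.1}), let $j_0 \in \{1, \ldots, i\}$ be the smallest index $j$ for which either $j = i$ or $\supp(N_{(b_j)}) \ne \supp(M_{(d_j)})$. Since $b_j = d_j$ for $j < i$, the squarefree monomials $N_{(b_j)}$ and $M_{(d_j)}$ for $j < j_0$ have matching support sizes; hence (as $\F$ allows unique monomial supports by Proposition \ref{gcd}) they coincide as monomials, so $M$ and $N$ share the common left-factor $K = M_{(d_1)} \cdots M_{(d_{j_0 - 1})}$.

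Cancelling $K$ in the domain $R$ gives $(N):M = (N'):M'$, where $N' = N_{(b_{j_0})} \cdots N_{(b_u)}$ and $M' = M_{(d_{j_0})} \cdots M_{(d_t)}$. By the nesting of supports in a normal form we have $\supp(N') = \supp(N_{(b_{j_0})})$ and $\supp(M') = \supp(M_{(d_{j_0})})$. I would next pinpoint an $F_0 \in \supp(N') \setminus \supp(M')$: if $j_0 < i$, the two squarefree supports have the same cardinality $s - c + d_{j_0}$ but are distinct, so pick $F_0$ in their symmetric difference; if $j_0 = i$, the strict inequality $b_i > d_i$ forces $|\supp(N_{(b_i)})| > |\supp(M_{(d_i)})|$, again yielding such an $F_0$. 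In either case the nesting $\supp(M_{(d_i)}) \subseteq \supp(M_{(d_{j_0})}) = \supp(M')$ guarantees $F_0 \notin \supp(M_{(d_i)})$, i.e.\ $F_0 \in \supp(M_{(d_i)})^C$. Lemma \ref{6.colon} then yields $(N):M = (N'):M' \subseteq (F_0) \subseteq \a_{\supp(M_{(d_i)})^C}$.

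To promote these principal-ideal inclusions to the stated full ideal colon, I would use the reformulation (valid because $R$ is a domain) that $(N) \cap (M) \subseteq F_0 \cdot (M) \subseteq \a_{\supp(M_{(d_i)})^C} \cdot (M)$ for each $N$, then sum over all $N$ with $P(N) = [\ul{b}]$ and invoke distributivity of intersection over sums for ideals generated by monomials in $\F$; dividing through by $M$ (permissible in the domain) returns the statement. The main obstacle I anticipate is precisely this last step, because Remark \ref{3.prob} warns that monomials-in-$\F$ can misbehave relative to classical monomials, so the distributivity needs justification. I would resolve it by specializing to the monomial star configuration setting in the style of \cite[Thm~3.6]{GHMN}, where the distributivity of intersection for monomial ideals is standard, and then lifting the containment back to the general star configuration case.
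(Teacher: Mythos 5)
Your per-generator argument is a correct and rather clean piece of reasoning that is not how the paper proceeds at all, so let me address both the comparison and the gap.

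Where you succeed: the reduction to a principal colon via the common left-factor $K$, the case split on $j_0 < i$ versus $j_0 = i$ to produce $F_0 \in \supp(N')\setminus\supp(M')$, the observation that nesting of supports forces $F_0 \in \supp(M_{(d_i)})^C$, and the appeal to Lemma \ref{6.colon} are all sound. This establishes $(N):M \subseteq \a_{\supp(M_{(d_i)})^C}$ for every single $N$ with $P(N)=[\ul{b}]$. Your route is genuinely different from the paper's: the paper localizes at the associated primes of $\a := \a_{\supp(M_{(d_i)})^C}$ from the very start, tracks what each factor $M_{(d_j)}$ and each $I_{c-b_j+1}$ becomes in $R_P$ via the complete-intersection structure of $\a_P$, and finishes with an order computation in $\mathrm{gr}_{\a_P}(R_P)$. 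Your per-generator approach is arguably more modular.

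The genuine gap is the last step. You correctly recognize that $H:M = \sum_N (N):M$ is exactly the identity that Remark \ref{3.prob}(c) says can fail for monomials in $\F$, so you need a substitute. The specialization-to-monomials idea does not provide one, and in fact goes the wrong way. If $\pi : k[y_1,\dots,y_s] \to R$ sends $y_i \mapsto F_i$, and $\tilde H, \tilde M, \tilde\a$ denote the preimage monomial ideals, one always has $\pi(\tilde H : \tilde M)R \subseteq (H:M)$, but the colon can strictly grow under $\pi$ (additional cancellations can appear among the $F_i$ that were not present among the variables $y_i$). So proving $\tilde H : \tilde M \subseteq \tilde\a$ in the polynomial ring and ``lifting back'' gives you no control over $H:M$ in $R$; this is precisely the kind of ideal-theoretic statement the paper's Section 1.2 warns does not transfer under \cite[Thm~3.6]{GHMN}, in contrast to numerical invariants. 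The correct tool here is a localization argument: one must show that for the specific target ideal $\a = \a_B$ (a subset of $\F$ of size $< c$, hence a complete intersection), the implication ``each $(N_i):M \subseteq \a$ implies $H:M \subseteq \a$'' holds. This is exactly Proposition \ref{6.mon}, which the paper proves by localizing at the associated primes of $\a$ and using Proposition \ref{2.ghmn}(2) to strip the generators of $\F \setminus B$ into units, at which point one lands in an honest polynomial ring $k[F \mid F\in B]$ where distributivity does hold. If you replace your specialization step with a proof of that implication by localization, your overall proof becomes a valid alternative to the paper's.
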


\begin{proof}
First, since $P(M)=[d_1,\ldots,d_t]$, then the normal form of $M$ (in Notation \ref{5.1}) is $M=M_{(d_1)}\cdots M_{(d_t)}$. 
To prove the statement it suffices to prove the inclusion locally at every associated prime of $\a:=\a_{\supp(M_{(d_i)})^C}$. So, let $P\in \Ass(R/\a)$ and let $\F' =\supp(M_{(d_i)})^C$.  Since $|\F'|=|\supp(M_{(d_i)})^C| = c-d_i<c$, then by the assumption on $\F$ the ideal $\a=\a_{\F'}$ is a complete intersection of height $c-d_i$, thus in particular $\h(P)=c-d_i$. 
For any $j\geq i$ and any $F\in \supp(M_{(d_j)})$, by assumption on $\F$ we have $\h(\a+(F))=c-d_i+1$, then in particular $F$ is regular on $R/P$. This has the following important consequences:
\begin{enumerate}
\item the monomial $M_{(d_j)}$ is a unit in $R_P$ for any $j\geq i$ and then $MR_P = M'R_P$ where $M'=\prod_{j=1}^{i-1}M_{(d_j)}$. Since $\F'$ is a regular sequence and $\a=\a_{\F'}$, then $M' \in \a_P^{o(M')} \setminus \a_P^{o(M')+1}$ where $o(M'):=\sum_{j=1}^{i-1}(d_j-d_i)$.

\item For any positive integer $b\leq d_i$ one has $(I_{c-b+1})_P = R_P$;

\item For any positive integer $b>d_i$ one has $(I_{c-b+1})_P = \left(I_{c-b+1,\F'}\right)_P$, where $I_{c-b+1,\F'}$ is the star configuration of height $c-b+1$ in the forms $\F'$.
Since for any minimal generator $N$ of $I_{c-b+1,\F'}$ we have $N$ is squarefree with 
$|\supp(N)| = |\F'| - (c-b+1) + 1 =  b-d_i$, we conclude that for any $b>d_i$ one has $$(I_{c-b+1})_P \subseteq (\a^{b-d_i - 1})_P.$$

\end{enumerate}

Since for any monomial $N\in\left(N \in \Gc\,\mid\, P(N)=[\ul{b}]\right)$ and any $j$ the squarefree monomial $N_{(b_j)}$ lies in $I_{c-b_j+1}$, then $\left(N \in \Gc\,\mid\, P(N)=[\ul{b}]\right) \subseteq \left( \prod_{j=1}^u I_{c-b_j+1}\right)$. Thus,
$$
\begin{array}{lll}
\left(N \in \Gc\,\mid\, P(N)=[\ul{b}]\right)_P :_{R_P} M & \subseteq \left( \prod_{j=1}^u \left(I_{c-b_j+1}\right)_P\right) :_{R_P}M'  & \\
& \subseteq \left(\prod_{j=1}^i \left(I_{c-b_j+1}\right)_P\right) :_{R_P}M' & \\
& \subseteq \left(\prod_{j=1}^i \left(\a^{b_j-d_i+1}\right)_P\right) :_{R_P}M'  & \\
& =  \left(\a^{\sum_{j=1}^i (b_j-d_i+1)}\right)_P :_{R_P}M' ,
\end{array}
$$
where the last inclusion holds by (3) above, because $b_j>d_i$ for $1\leq j \leq i$.

Since $\a_P$ is a complete intersection ideal in $R_P$, then the associated graded ring $G={\rm gr}_{\a_P}(R_P)$ is isomorphic to a polynomial ring in $c-d_i$ variables, and therefore
$$
\left(\a^{\sum_{j=1}^i (b_j-d_i+1)}\right)_P :_{R_P}M' = \a_P^{\left(\sum_{j=1}^i (b_j-d_i+1) \right)-o_{G}(M')},
$$
where $o_G(M')$ is the order of $M'$ in $G$. By point (1) above, we have $o_G(M')= \sum_{j=1}^{i-1}(d_j-d_i)$. Then
$$
\left(\sum_{j=1}^i (b_j-d_i) \right) - o_G(M') = \left(\sum_{j=1}^i (b_j-d_i) \right) -  \sum_{j=1}^{i-1}(d_j-d_i) = b_i-d_i
$$
where the rightmost equality follows because $b_j=d_j$ for $j=1,\ldots,i-1$ by assumption. Since $b_i>d_i$ by assumption, we conclude that 
$$
\left(N \in \Gc\,\mid\, P(N)=[\ul{b}]\right)_P :_{R_P} M  \subseteq \a_P^{b_i-d_i}\subseteq \a_P,
$$
finishing the proof.

\end{proof}

\begin{Proposition}\label{6.mon}
Let $R,c,s,\F$ be as in Setting \ref{4.set}. Let $M,N_1,\ldots,N_r$ be monomials in $\F$ and set $H=(N_1,\ldots,N_r)$. Let $B\subseteq \F$ be any subset of at most $c$ elements of $\F$, and $\a:=\a_B=(F\,\mid\,F\in B)$.

Then $H:M \subseteq \a$ if and only if $(N_i:M)\subseteq \a$ for every $i=1,\ldots,r$.
\end{Proposition}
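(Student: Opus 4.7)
The forward implication is immediate: since $(N_i:M)\subseteq (N_1,\ldots,N_r):M = H:M$ for each $i$, the assumption $H:M\subseteq \a$ gives $(N_i:M)\subseteq \a$.

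For the converse, suppose $(N_i:M)\subseteq \a$ for every $i$. Because $|B|\le c$ and any $c+1$ elements of $\F$ form a regular sequence by Setting \ref{4.set}, $\a=\a_B$ is a complete intersection of height $|B|$; in particular $\a$ is unmixed, hence $\a=\bigcap_{P\in \Ass(R/\a)}\a R_P\cap R$. It therefore suffices to prove $(H:M)R_P\subseteq \a R_P$ for each $P\in \Ass(R/\a)$.

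Fix such a $P$. The key reduction is to strip from the $N_i$ and $M$ the factors coming from $\F\setminus B$, which become units after localizing at $P$. Indeed, for any $F\in \F\setminus B$, the set $B\cup\{F\}$ has at most $c+1$ elements and so is a regular sequence; hence $F$ is a nonzerodivisor on $R/\a$, and since $P\in \Ass(R/\a)$, $F\notin P$, so $F$ is a unit in $R_P$. Because $\F$ allows a unique monomial support (Proposition \ref{gcd}), we may write $N_i=\bar N_i\tilde N_i$ and $M=\bar M\tilde M$ with $\bar N_i,\bar M$ monomials in $B$ and $\tilde N_i,\tilde M$ monomials in $\F\setminus B$. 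The latter are units in $R_P$, so $N_iR_P=\bar N_iR_P$ and $MR_P=\bar MR_P$, giving $(H:M)R_P=(\bar N_1,\ldots,\bar N_r)R_P:_{R_P}\bar MR_P$ and $(N_i:M)R_P=(\bar N_i:_{R_P}\bar M)R_P$.

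The problem has thus been reduced to a purely monomial statement about the regular sequence $B=\{G_1,\ldots,G_b\}$ inside $R_P$, and the plan is to invoke the classical sum-of-colons formula for monomial ideals in a regular sequence: $(\bar N_1,\ldots,\bar N_r)R_P:_{R_P}\bar MR_P \,=\, \sum_{i=1}^{r}(\bar N_i:_{R_P}\bar M)R_P$. Combined with the hypothesis this yields $(H:M)R_P=\sum_i (N_i:M)R_P\subseteq \a R_P$, completing the proof. The main obstacle is justifying this sum-of-colons formula in the regular-sequence setting (without an honest monomial ideal structure on $R_P$ itself); I expect to prove it by exploiting the standard isomorphism $\mathrm{gr}_{\a R_P}(R_P)\cong (R_P/\a R_P)[y_1,\ldots,y_b]$ that holds because $B$ is a regular sequence, reducing the colon computation on monomials in $G_1,\ldots,G_b$ to the familiar polynomial-ring case; alternatively, by induction on $b=|B|$, using that each $G_j$ is a nonzerodivisor modulo any ideal generated by powers of the remaining $G_i$'s.
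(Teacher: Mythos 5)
Your proposal follows essentially the same route as the paper: both localize at each $P\in\Ass(R/\a)$, observe that the forms in $\F\setminus B$ are units in $R_P$, strip those unit factors from the $N_i$ and $M$, and then invoke the sum-of-colons identity for monomials in the regular sequence $B$ (the paper phrases this by noting that $k[F\mid F\in B]$ is a polynomial ring). Your proposal is slightly more explicit about the one step that genuinely needs care --- transferring the colon formula from $k[F\mid F\in B]$ to colons taken in $R_P$ --- which the paper states more tersely; the $\mathrm{gr}_{\a R_P}(R_P)$ device you suggest is a perfectly sound way to make that rigorous and is in the same spirit as the paper's use of the associated graded ring elsewhere (e.g.\ in the proof of Theorem~\ref{4.Sdeg}).
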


\begin{proof}
Assume $N_i:M \subseteq \a$ for every $i$. We prove $H:M \subseteq \a$ by showing the inclusion holds locally at every $P\in \Ass(R/\a)$. 

Fix $P\in \Ass(R/\a)$ and let $S=R/\a$. By assumption on $\F$, the elements $\{F\,\mid\,F\in B\}$ form a regular sequence of height at most $c$ and $P$ is an associated prime of $I_{|B|,\F}$. 
By Proposition \ref{2.ghmn}(2) applied to $I_{|B|,\F}$, for any $F\in B^C$ we have $F\notin P$. Now, for any monomial $N$ let $N'$ be the monomial obtained from $N$ by replacing the elements $F\in B^C \cap \supp(N)$ by $1$. Then $HS_P: MS_P = (N_1',\ldots,N_r')S_P : M'S_P=\left((N_1',\ldots,N_r'):M'\right)S_P$. 

Now, $N_1',\ldots,N_r', M'$ are monomials in the regular sequence $\{F\,\mid\,F\in B\}$, thus  $k[F\,\mid\,F\in B]\cong k[y_F\,\mid\,F\in B]$ is isomorphic to a polynomial ring in $|B|$ variables and then $(N_1',\ldots,N_r'):M' = \sum_{j=1}^r (N_j':M')$. Therefore
$$
HS_P: MS_P =\left((N_1',\ldots,N_r'):M'\right)S_P = \left(\sum_{j=1}^r (N_j':M')\right)S_P \subseteq \a S_P.
$$

\end{proof}

Let $R,c,s,\F$ be as in Setting \ref{4.set}. For any subset $B=\{b_1,\ldots,b_h\}\subseteq \F$ with $b_1<b_2<\cdots <b_h$ and any monomial $M$ in $B$, we set
$$
m_B(M) :=\max\{j\in \{1,\ldots,h\},\mid\,F_{b_j}\mbox{ divides }M\}.
$$
If $B=\F$, we simply write $m(M)$. 

\begin{Proposition}\label{6.m=1}
Let $R,c,s,\F$ be as in Setting \ref{4.set}. Then $I_{c,\F}$ has c.i. quotients with respect to the order $\tau$ of Definition \ref{tau}. 

Additionally, for any $M\in \G_{c,(1)}$ one has
$$
|\set(M)| = m(M) - s+c -1.
$$
If moreover all forms of $\F$ have the same degree $\delta\geq 1$, then $I_{c,\F}$ has $\delta$-c.i. quotients.
\end{Proposition}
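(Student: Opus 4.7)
The plan is to reduce everything to a concrete revlex calculation on squarefree monomials. Since every element of $\G_{c,(1)}$ is a squarefree monomial of support size $s+1-c$ with associated partition $[1]$, the order $\tau$ of Definition \ref{tau} reduces, by Remark \ref{5.rev}, to ordinary revlex with $F_1 > F_2 > \cdots > F_s$. So I list the generators as $M_1 >_{\tau} M_2 >_{\tau} \cdots >_{\tau} M_r$ and, for a fixed $M = M_{i+1}$ with support $T$ and largest index $m(M) = \max T$, I will identify the colon ideal $H : M$ where $H := (M_1, \ldots, M_i)$ is generated by all strictly larger generators.

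The candidate for $\set(M)$ is $B := \{F_j \mid j < m(M),\ j \notin T\}$, which has cardinality $(m(M)-1) - (s-c) = m(M)-s+c-1$ because $T \cap \{1,\ldots,m(M)-1\}$ has size $|T|-1 = s-c$. To prove $H : M = \a_B$, I would split into the two inclusions: \emph{(a)} every $F_j \in B$ lies in $H:M$, because $M' := M \cdot F_j / F_{m(M)}$ is again a squarefree generator of $I_{c,\F}$ and satisfies $M' >_{\revlex} M$ (as $j < m(M)$ produces the swap in the right direction), so $F_j M = F_{m(M)} M' \in H$; and \emph{(b)} the reverse inclusion, which is where Proposition \ref{6.mon} does the work. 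For any $M' >_{\revlex} M$, a short revlex argument shows $\max(\supp(M') \setminus T) < m(M)$: if $\max(\supp(M')) > m(M)$ then $\max(\supp(M') \setminus T) = \max(\supp(M')) > m(M) \geq \max(T \setminus \supp(M'))$, contradicting $M' > M$; and if $\max(\supp(M')) \leq m(M)$, then any index in $\supp(M') \setminus T$ is $\leq m(M)$, and is not equal to $m(M)$ since either $m(M) \in \supp(M') \cap T$ or $\max \supp(M') < m(M)$. Since $|\supp(M')| = |T|$, the set $\supp(M') \setminus T$ is nonempty, so it contains some $F_j$ with $j < m(M)$ and $F_j \notin T$, i.e.\ some $F_j \in B$. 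As $M' : M$ is the principal ideal generated by $\prod_{F \in \supp(M') \setminus T} F$ (using Lemma \ref{6.colon} together with cancellation, since both are squarefree of the same degree), we have $(M') : M \subseteq (F_j) \subseteq \a_B$; Proposition \ref{6.mon} then upgrades this to $H : M \subseteq \a_B$.

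Finally, since $|B| \leq c-1 < c+1$, the assumption on $\F$ in Setting \ref{4.set} guarantees that the elements of $B$ form a regular sequence, so $\a_B$ is a complete intersection. Thus $I_{c,\F}$ has c.i.\ quotients with respect to $\tau$ and $|\set(M)| = |B| = m(M) - s + c - 1$. If in addition every $F_i$ has degree $\delta$, then $\set(M) \subseteq \F$ automatically consists of forms of degree $\delta$, so $I_{c,\F}$ has $\delta$-c.i.\ quotients.

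I expect the only delicate point to be the revlex comparison in part \emph{(b)}: one has to verify carefully that no $M' >_{\revlex} M$ can introduce a new variable $F_j$ with $j \geq m(M)$, so that $\supp(M') \setminus T$ lands entirely inside $B$. Once this is pinned down, the rest is a short and clean computation, and Proposition \ref{6.mon} takes care of the potential subtleties (highlighted in Remark \ref{3.prob}) that prevent one from naively splitting the colon of a sum of monomials in $\F$ into a sum of colons.
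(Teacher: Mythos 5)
Your proof is correct and follows essentially the same route as the paper's: identify $B=\{F_j \mid j<m(M),\ j\notin\supp(M)\}$ as the candidate, verify ``$\supseteq$'' via the swap $M'=F_jM/F_{m(M)}$, and verify ``$\subseteq$'' by a revlex support argument combined with Lemma~\ref{6.colon}. If anything you are slightly more careful than the paper: you explicitly invoke Proposition~\ref{6.mon} to pass from ``each $M':M\subseteq\a_B$'' to ``$H:M\subseteq\a_B$'' (a step the paper leaves implicit), and this is exactly the right tool given the colon pathologies for monomials in $\F$ flagged in Remark~\ref{3.prob}.
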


\begin{proof}
Let $>$ be the order $\tau$ on $\G_{c,(1)}$ defined in Definition \ref{tau}. Both statements follow if we show that for any $M\in \G_{c,(1)}$ one has 
$$
(N\in \G_{c,(1)}\,\mid\,N>M) : M = (F_j\in \supp(M)^C\,\mid\, j<m(M)).
$$
``$\supseteq$": For any $F_j$ in $\supp(M)^C$, let $N_j$ be the monomial $N_j:=\frac{F_jM}{F_{m(M)}}$, then $F_jM\in (N_j)$. By Theorem \ref{4.Symb} one has $N_j\in \G_{c,(1)}$ and $N_j$ and $M$ are squarefree monomials. If $j<m(M)$, then $m(N_j)<m(M)$, thus $N_j\gg_{\revlex}M$ by Remark \ref{5.rev} and then $N_j>M$. This proves the inclusion.

``$\subseteq$": Assume $N\in \G_{c,(1)}$ with $N>M$. Since $N\neq M$ are squarefree with $|\supp(N)|=|\supp(M)|=s-c+1$, then $\supp(N)\not\subseteq \supp(M)$; additionally, since $N>M$, then by Remark \ref{5.rev} $N>_{\revlex} M$, thus $\supp(N)\setminus \supp(M)\subseteq \{F_j\in \supp(M)^C\,\mid\, j<m(M)\}$. The statement now follows by Lemma \ref{6.colon}.

\end{proof}

For any $M\in \Gc$, we let $M=M_{(d_1)}\cdots M_{(d_t)}$ be its normal form. We write $i_0(M):=i_0(P(M))$ for the index of overlap of $P(M)$ (see Definition \ref{6.i0}), and define the ideal
$$
\a_0(M):= \a_{\supp(M_{(d_{i_0})})^C}=(F\in \F\,\mid\, F\notin \supp(M_{(d_{i_0})})),\; \text{ where }i_0:=i_0(M).
$$
Since $\a_0(M)$ is generated by $c-d_{i_0}$ elements of $\F$, then (by assumption on $\F$) the ideal $\a_0(M)$ is a complete intersection.

We are now ready to prove the main result of this section: symbolic powers of star configurations of hypersurfaces have c.i. quotients. 

\begin{Theorem}\label{6.delta}
Let $R,c,s,\F$ be as in Setting \ref{4.set}; then for any $m\geq 1$ the ideal $I_{c,\F}^{(m)}$ has c.i. quotients with respect to the order $\tau$ of Definition \ref{tau}.

 Additionally, for any $M\in \Gc$, let $P(M)=[d_1,\ldots,d_t]$ and set $i_0:=i_0(M)$. Then 
\begin{enumerate}
\item $|\set(M)| = m(M_{(r)}) - s+c-r $, if $P(M)=[c,\ldots,c,r]$ is the maximal element in $\Pc$;\\ 
\item $|\set(M)| = c-d_{i_0}$, if $d_t=d_{i_0}$;\\
\item $|\set(M)| = c-d_{i_0} + m_{\supp(M_{(d_{i_0})})} (M_{(d_t)}) - (s - c + d_t)$, if $d_t<d_{i_0}$ and $P(M)\neq[c,\ldots,c,r]$ is not maximal in $\Pc$.
\end{enumerate}

 Moreover, if all forms in $\F$ have the same degree $\delta$, then $I_{c,\F}^{(m)}$ has $\delta$-c.i. quotients.
\end{Theorem}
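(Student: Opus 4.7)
The plan is to identify, for each $M \in \Gc$, an explicit ideal $K(M)$ generated by a subset of $\F$ of size exactly $|\set(M)|$ (as claimed in each of the three cases), and to prove that $J(M):M = K(M)$, where $J(M):=(N \in \Gc : N>_\tau M)$. Since this subset has at most $c$ elements of the regular sequence $\F$ (a quick count in each case gives $\leq c-1$ in cases (2) and (3) and $\leq c$ in case (1)), it is automatically a regular sequence and $K(M)$ is a complete intersection minimally generated by it, which yields the numerical assertions; the $\delta$-c.i. quotients statement then follows because all $F_i$ have degree $\delta$.

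First I would establish the inclusion $J(M):M \subseteq K(M)$. By Proposition \ref{6.mon} this reduces to showing $(N):M \subseteq K(M)$ for every $N>_\tau M$. By Definition \ref{tau} there are two kinds of $N$. When $P(N)>_\alex P(M)$ with first disagreement at position $i$, the definition of the index of overlap forces $i \leq i_0$, hence $\supp(M_{(d_{i_0})})\subseteq\supp(M_{(d_i)})$, and Proposition \ref{6.supp} gives $(N):M \subseteq \a_{\supp(M_{(d_i)})^C} \subseteq \a_0(M)\subseteq K(M)$. When $P(N)=P(M)$ and $N\gg_\revlex M$ with first differing factor at position $h_0$, Lemma \ref{6.supp0} gives $(N):M\subseteq\a_{\supp(M_{(d_{h_0})})^C}$. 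The key observation is that equal consecutive entries in a partition force equal squarefree factors in the normal form (since equal support sizes together with nesting imply equality), so combined with Lemma \ref{6.part2}(2) one deduces $h_0\in\{1,\dots,i_0\}\cup\{t\}$. The range $h_0\leq i_0$ lands in $\a_0(M)$ as above, while $h_0=t$ (occurring in cases (1) and (3)) reduces by cancellation to $M_{(d_t)}':M_{(d_t)}$ for squarefree monomials with $\supp(M_{(d_t)}')\subseteq\supp(M_{(d_{i_0})})$ (or $\subseteq\F$ in case (1)); Proposition \ref{6.m=1} applied to the induced star configuration of $M_{(d_t)}$ inside $\supp(M_{(d_{i_0})})$ exactly produces the claimed additional generators.

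Next I would verify the reverse inclusion by constructing, for each claimed generator $F$, a witness $N\in\Gc$ with $N>_\tau M$ and $N\mid FM$. For $F\in\supp(M_{(d_{i_0})})^C$ (cases (2) and (3)), set $j^*:=\min\{j: F\notin\supp(M_{(d_j)})\}\leq i_0$; this position is automatically a jump of $[\ul{d}]$ (otherwise $\supp(M_{(d_{j^*-1})})=\supp(M_{(d_{j^*})})$, contradicting $F\in\supp(M_{(d_{j^*-1})})\setminus\supp(M_{(d_{j^*})})$), and the normal form of $FM$ becomes $M_{(d_1)}\cdots(FM_{(d_{j^*})})\cdots M_{(d_t)}$ with $\Sdeg=m+1$. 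Trim the last factor: when $d_t\geq 2$, replace $M_{(d_t)}$ with a squarefree submonomial of support size $s-c+d_t-1$; when $d_t=1$, drop it. The resulting $N$ is in $\Gc$ with $P(N)>_\alex P(M)$ and $N\mid FM$; the entry $d_{j^*}+1\leq c$ is automatic since $d_{j^*}=c$ would force $\supp(M_{(d_{j^*})})=\F$, contradicting $F\notin\supp(M_{(d_{j^*})})$. For the additional generators $F_j$ in case (3), set $N_{(d_t)}':=(M_{(d_t)}/F_{m'})F_j$ with $m':=m_{\supp(M_{(d_{i_0})})}(M_{(d_t)})$ and replace only the last factor; then $P(N)=P(M)$, $N\gg_\revlex M$, and $F_jM=N\cdot F_{m'}$. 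Case (1) is handled analogously (only $M_{(r)}$ can vary), reducing everything to Proposition \ref{6.m=1} applied to $M_{(r)}$ as a minimal generator of $I_{c-r+1}$.

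The hard part will be the upper-bound analysis in the revlex subcase for case (3): ruling out $i_0<h_0<t$ requires combining Lemma \ref{6.part2}(2) (forcing $d_j=d_{i_0}$ for $i_0\leq j\leq t-1$) with the rigidity of the normal form, and then identifying the $h_0=t$ contribution with the correct new $m=1$ set inside the smaller form set $\supp(M_{(d_{i_0})})$ rather than all of $\F$. A secondary bookkeeping obstacle is verifying that each constructed partition $[\ul{b}]$ remains non-increasing and sits in $\Pc$ after the insertion-and-trim step; this follows cleanly from the jump at $j^*$ together with the bounds $d_{j^*}+1\leq d_{j^*-1}$ and $d_{j^*}+1\leq c$ noted above.
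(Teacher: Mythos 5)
Your proof is correct and follows essentially the same strategy as the paper's: you split the colon computation exactly as the paper does (the $>_{\rm alex}$ piece via Proposition~\ref{6.supp} giving $\a_0(M)$, the $=_{\rm alex}$ piece via the position $h_0$ with the rigidity observation forcing $h_0\le i_0$ or $h_0=t$, and the $h_0=t$ tail handled by Proposition~\ref{6.m=1} inside $\supp(M_{(d_{i_0})})$), you use Proposition~\ref{6.mon} to reduce to single colons in the same way, and your witness construction for the reverse inclusion is the same insertion-plus-trim as the paper's use of Lemma~\ref{6.part1} together with permutations of $\F$. The only cosmetic differences are that you rederive the content of Lemma~\ref{6.supp0}(2) inline and re-derive the special case of Lemma~\ref{6.part1} directly, neither of which changes the argument.
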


%

\begin{proof}
Let $>$ denote the total order $\tau$ on $\Gc$ defined in Definition \ref{tau}; we use it to order the minimal generators of $\Gc$, and write them as
$$\Gc=\{N_1,\ldots,N_r\}, \text{ where } N_1> N_2> \cdots > N_r.
$$
For any fixed $M\in \Gc$ we find an explicit minimal generating set $\set(M)$ for the colon ideal $(N\in \Gc\,|\,N>M):M$, see Claim 3 below. We then remark that $\set(M)$ is a regular sequence for any $M$, which is not obvious {\em a priori}, because the elements of $\F$ do not necessarily form a regular sequence.

Also, before starting the proof, we notice that in part (2) of the statement we don't need to add the assumption ``if $P(M)$ is not maximal in $\Pc$", because if $P(M)$ is maximal in $\Pc$ and $d_t=d_{i_0}$, then $P(M)=[c,\ldots,c]$, in which case $|\set(M)|=0$ and formulas (1) and (2) agree.
\\ 
\\
{\bf Claim 1.} Let $M\in \Gc$ and $\a_{\supp(M_{(d_{i_0}) })^C}=:\a_0(M)$. If $P(M)$ is not maximal in $\Pc$ in the alex order, then 
$$
\left( N\in \Gc\,\mid\,P(N)>_{\rm alex}P(M) \right) : M = \a_0(M).
$$

``$\subseteq$": let $N\in \Gc$ with $P(N):=[b_1,\ldots,b_u]>_{\rm alex}P(M)$.  By Proposition \ref{6.supp}, we have $N:M \subseteq  \a_{\supp(M_{(d_i)})^C}$ where $i$ is the integer such that $b_j = d_j$ for $j=1,\ldots,i-1$ and $b_{i}>d_{i}$. By definition of $i_0(M)$ we have $i\leq i_0(M)$, therefore $\a_{\supp(M_{(d_i)})^C} \subseteq \a_{\supp(M_{(d_{i_0}) })^C}=\a_0(M)$.

``$\supseteq$": Let $F\in \supp(M_{(d_{i_0}) })^C$ and let $j_0 := \min\{j\,\mid\, F\in \supp(M_{(d_j)})^C \}\geq i_0$. Observe that, if $j_0>1$, then $\supp(M_{(d_{j_0}) }) \neq \supp(M_{(d_{(j_0-1)}) })$, thus $d_{j_0}<d_{(j_0-1)}$. Since $j_0\leq i_0<t$ (by Lemma \ref{6.part2}), then by Lemma \ref{6.part1} there exists a partition $[\ul{b}] \in \Pc$ with $b_j=d_j$ for all $j\neq j_0,t$, $[\ul{b}]>_{\rm alex}[\ul{d}]$, and $\lambda([\ul{b}])\in \{t-1,t\}$. 

By Corollary \ref{6.surj} there exists $N' \in \Gc$ with $P(N')=[\ul{b}]$. Since any permutation on $\F$ applied to $N'$ yields element $N\in \Gc$ with $P(N)=[\ul{b}]$, we can take $N$ such that in its normal form we have $\supp(N_{(b_j)})=\supp(M_{(d_j)})$ for all $j\neq j_0,t$, and $\supp(N_{(b_{j_0}) }) = \{F\} \cup \supp(M_{(d_{j_0}) })$, and $N_{(b_t)}=\frac{M_{(d_t)}}{G}$ for some $G\in \supp(M_{(d_t)})$ (the choice of $G$ is irrelevant). 

By construction, we have $FM=GN$, thus $F\in (N:M)$, concluding the proof of Claim 1.\\
\\
 {\bf Claim 2.} Let $M,M'\in \Gc$ with $P(M')=P(M)=[d_1,\ldots,d_t]$ and $M'\gg_{\revlex}M$. Let $M=\prod_{j=1}^tM_{(d_j)} $ and $M'=\prod_{j=1}^t M_{(d_j)}'$ be their normal forms. We define
  $$
 h_0=h_0(M,M') := \max\{h\,\mid\, M_{(d_j)}' = M_{(d_j)} \text{ for all }j<h \}.
 $$
 \begin{itemize}
 \item[(a)] If $h_0\leq i_0(M)$, then 
$M':  M \subseteq \a_0(M)$;
\item[(b)] If $h_0> i_0(M)$, then 
$M':  M =M_{(d_t)}' : M_{(d_t)}.$\\
\end{itemize}

\noindent Let $i_0:=i_0(M)$. (a) follows by Lemma \ref{6.supp0}(1) and $\supp(M_{(d_{h_0}) })^C \subseteq \supp(M_{(d_{i_0}) })^C$. 
(b) Since $h_0>i_0$ and $M'\neq M$, then Lemma \ref{6.supp0}(2) implies $d_t<d_{i_0}$. By Lemma \ref{6.part2}(2) one has
 $\supp(M_{(d_j)}) = \supp(M_{(d_{i_0}) })$ for every $i_0\leq j \leq t-1$, and Lemma \ref{6.supp0}(2) implies $\supp(M_{(d_j)}') = \supp(M_{(d_j)})$ for every $i_0\leq j \leq t-1$. Since $i_0<h_0$, then we also know that $\supp(M_{(d_j)}') = \supp(M_{(d_j)})$ for every $j\leq i_0$. Therefore, if we set $L=\prod_{j=1}^{t-1}M_{(d_j)}$, we have
$$ M = L M_{(d_t)} \qquad \text{ and }M' = L M_{(d_t)}'.$$
The statement now follows by cancellation of $L$ in the colon. This concludes the proof of Claim 2.
\\
\\
{\bf Claim 3.} Let $M\in \Gc$.
\begin{enumerate}
\item If $P(M)$ is maximal in $\Pc$, then $P(M)=[c,c,\ldots,c,r]$ where $r$ is an integer with $1\leq r\leq c$, and $\set(M)=\set(M_{(r)})$ (since $M_r$ is a minimal generator of the star configuration $I_{c-r+1}$, then $\set(M_{(r)})$ is well-defined).

\item Assume $P(M)$ is not maximal in $\Pc$.
\begin{itemize}
\item[(a)]  If $d_t=d_{i_0}$, then $\set(M)=\supp(M_{(d_{i_0}) })^C.$
\item[(b)] If $d_t<d_{i_0}$, then $$\set(M)=\supp(M_{(d_{i_0}) })^C \cup \set_{ \supp\left(M_{(d_{i_0}) }\right)}(M_{(d_t)}),$$
 where $ \set_{ \supp(M_{(d_{i_0})})}(M_{(d_t)})$ is $\set(M_{(d_t)})$ when we consider $M_{(d_t)}$ as a minimal generator of the star configuration
   $I_{d_{i_0}-d_t+1,  \supp(M_{(d_{i_0}) })}$. \\ 
  \end{itemize}
\end{enumerate}

(1) Write $m=qc+r$ with $1\leq r\leq c$, then the alex order implies that $P(M)=[c,c,\ldots,c,r]$, thus every $M'$ with $M'>M$ has $P(M')=P(M)$ and thus $M'=(F_1\cdots F_s)^q M_{(r)}'$ for some $M_{(r)}'$ with $|\supp(M_{(r)}')|=s-c+r$. Then, $M':M = M_{(r)}' : M_{(r)}$ by cancellation. This proves that
{\small$$
(M'\in \Gc\,\mid\,M'>M): M = (N'\in \G_{r,(1)}\,\mid\, N'>_{\revlex}M_{(r)}) : M_{(r)}= (\set(M_{(r)})).
$$}

(2) (a) The inclusion ``$\supseteq$" follows from Claim 1. For the other inclusion, it suffices to show that
$(M'\in \Gc\,\mid\, M'>_{\tau} M ) : M \subseteq \a_0(M)$. By Proposition \ref{6.mon} it suffices to prove $M':M \in \a_0(M)$ for any $M'>_{\tau}M$. If  $P(M')>_{\alex}P(M)$, the inclusion follows by Claim 1. If $P(M')=P(M)$, we look at the number $h_0=h_0(M,M')$ defined in Claim 2. If $h_0\leq i_0(M)$, the inclusion follows by Claim 2(a). If $h_0>i_0(M)$, then $M':M =M_{(d_t)}':M_{(d_t)} \subseteq \a_{\supp(M_{(d_t)}')}$ by Lemma \ref{6.colon}. The statement follows because $\supp(M_{(d_t)}')\subseteq \supp(M_{(d_{i_0})}')=\supp(M_{(d_{i_0})})$, where the equality holds because $h_0 \geq i_0$.

(b) We need to show $$(M'\in \Gc\,\mid\,M'>M):M = \a_0(M) + (F\in \set_{ \supp(M_{(d_{i_0})} )}(M_{(d_t)})).$$
``$\supseteq$" By Claim 1 one has $\a_0(M)\subseteq (M'\in \Gc\,\mid\,M'>M):M$. Let $F\in \set_{ \supp(M_{(d_{i_0})} )}(M_{(d_t)})$, by the revlex order, there exists $G<F$ with $G \in\supp(M_{(d_t)})$, so we let $N':=(MG)/F$. It follows that $N'$ is a minimal monomial generator of $ I_{d_{i_0}-d_t+1,\supp(M_{(d_{i_0}) } ) }$ and $F\in N':M_{(d_t)}$. By definition of $N'$ and Lemma \ref{6.part2}(2) one has $\supp(N')\subseteq \supp(M_{(d_{j})})$ for all $i_0\leq j \leq t-1$. Therefore, $M':=\left(\prod_{j=1}^{t-1}M_{(d_j)}\right)N'$ is its own normal form, and clearly one has $P(M')=P(M)$ and $M'>_{\tau}M$. Finally, by construction, $F\in (M':M)=N':M_{(d_t)}$.

``$\subseteq$"  By Proposition \ref{6.m=1} $|\set_{ \supp(M_{(d_{i_0}) })}(M_{(d_t)})| \leq m_{ \supp(M_{(d_{i_0}) })}(M_{(d_{i_0})}) - (s-c+d_{i_0}) + (d_{i_0}-d_t + 1) - 1\leq (s-c+d_{i_0})  - s + c -d_t = d_{i_0}-d_t$. Then $| \supp(M_{(d_{i_0}) })^C \cup \set_{ \supp(M_{(d_{i_0}) })}(M_{(d_t)})| \leq c-d_{i_0} + d_{i_0}-d_t=c-d_t<c$. Then, by Proposition \ref{6.mon}, we only need to show that $M':M \subseteq \a_0(M) + (F\in \set_{ \supp(M_{(d_{i_0})})}(M_{(d_t)}))$.

If $P(M')>_{\alex}P(M)$, or if $P(M')=P(M)$ and $h_0(M,M')\leq i_0(M)$, then the inclusion holds by Claims 1 and 2(a), respectively. Assume then $P(M')=P(M)$ and $h_0(M,M') >i_0$. By Lemma \ref{6.part2}(2) we have $h_0=t$, thus $ M' = LM_{(d_t)}'$ and $ M = LM_{(d_t)}$ where $L=\prod_{j=1}^{t-1}M_{(d_j)}$, and by the property of normal forms,  $\supp(M_{(d_t)}')\subseteq \supp(M_{(d_{i_0})}')=\supp(M_{(d_{i_0}) })$.

Observe that  $M_{(d_t)}'>_{\rm revlex} M_{(d_t)}$ (since $M'\gg_{\rm revlex} M$) and, by cancellation, $M':M=M_{(d_t)}' : M_{(d_t)}$, thus $M':M=M_{(d_t)}' : M_{(d_t)}\subseteq(F\in \supp(M_{(d_{i_0}) })\,\mid\,FM_{(d_t)}\in (M_{(d_t)}')) \subseteq (\set_{ \supp(M_{(d_{i_0}) })}(M_{(d_t)}))$. This proves Claim 3.\\

Now, the statement of the theorem follows by Claim 3 and Proposition \ref{6.m=1} if we prove that $\set(M)$ is a regular sequence for every $M\in \Gc$. By assumption on $\F$, it suffices to show that $|\set(M)|< c$ for any $M\in \Gc$. Indeed, if $P(M)=[c,\ldots,c,r]$, then $1\leq r \leq c$ and $m(M_{(r)})\leq s$, thus $|\set(M)|\leq c-r<c$. If $P(M)=[d_1,\ldots,d_t]$ and $d_t=d_{i_0}$, then $|\set(M)|=c-d_{i_0}<c$. 
If $P(M)=[d_1,\ldots,d_t]$ is not maximal and $d_t<d_{i_0}$, then 
$m_{\supp(M_{(d_{i_0})})} (M_{(d_t)}) \leq |\supp(M_{(d_{i_0})})|=s-c+d_{i_0}$, thus $|\set(M)|\leq c-d_t<c$.

\end{proof}

Recall that a star configuration is {\em linear} if $\deg(F_j)=1$ for every $j=1,\ldots,s$.

\begin{Corollary}\label{7.linear}
If $I_{c,\F}$ is a linear star configuration, then for every $m\geq 1$ the ideal $I_{c,\F}^{(m)}$ has linear quotients. In particular, monomial star configurations have linear quotients.
\end{Corollary}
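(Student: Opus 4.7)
The plan is to deduce this corollary directly from Theorem \ref{6.delta} together with the observation, recorded right after Definition \ref{7.ci}, that an ideal has linear quotients if and only if it has $1$-c.i.\ quotients.

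First, I would unpack the definition: $I_{c,\F}$ being a linear star configuration means every $F_i \in \F$ has degree $\delta = 1$. Applying the final clause of Theorem \ref{6.delta} with this value of $\delta$, the ideal $I_{c,\F}^{(m)}$ has $1$-c.i.\ quotients with respect to the total order $\tau$ of Definition \ref{tau}. Explicitly, enumerating the minimal generators as $N_1 >_\tau N_2 >_\tau \cdots >_\tau N_r$, Theorem \ref{6.delta} produces, for each $i$, a regular sequence $\set(N_i)\subseteq (N_1,\ldots,N_{i-1}):N_i$ generating this colon ideal and consisting of elements of $\F$, each of which has degree $1$ in the linear case. Hence every generator of each $\set(N_i)$ is a linear form, which is exactly the definition of linear quotients.

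The second sentence is then automatic: a monomial star configuration is by definition the case $\F = \{x_0,\ldots,x_n\}$, so in particular all forms of $\F$ have degree $1$, so it is a linear star configuration and the first part applies.

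Since the argument is essentially a one-line specialization, I do not foresee a real obstacle; the only care needed is to make explicit the identification ``$1$-c.i.\ quotients $=$ linear quotients,'' which was already noted in the example after Definition \ref{7.ci}. No new computation is required beyond invoking Theorem \ref{6.delta}.
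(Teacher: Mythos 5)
Your proof is correct and matches the paper's intended argument: the corollary is an immediate specialization of the last clause of Theorem \ref{6.delta} to $\delta=1$, combined with the observation (Example (1) following Definition \ref{7.ci}) that $1$-c.i.\ quotients is synonymous with linear quotients. The second sentence is indeed automatic since variables have degree one.
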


\begin{Example}\label{6.exa2}
(1) Let $\F=\{F_1,\ldots,F_5\}$ be five quadrics such that any $4$ of them form a regular sequence, and set $I'=I_{3,\F}^{(4)}$. Then $I'$ has $2$-c.i. quotients.\\

(2) Let $\mathcal G=\{G_1,\ldots,G_6\}$ be six cubics such that any $5$ of them form a regular sequence. Set $J'=I_{4,\mathcal G}^{(2)}$. Then $J'$ has $3$-c.i. quotients.\\
\end{Example}

In the next section, we employ Theorem \ref{6.delta} to determine a formula for {\em every} graded Betti number of $R/I_{c,\F}^{(m)}$, see Theorem \ref{7.formula}.

\section{The Betti table of $R/I_{c,\F}^{(m)}$}

In this section we determine the graded Betti numbers of $R/I_{c,\F}^{(m)}$. By Remark \ref{6.set}, we need to know $|\set(M)|$ for every $M\in \Gc$, and since Theorem \ref{6.delta} gives an explicit formula for $|\set(M)|$, our first main result is a complete formula for the Betti tables of any symbolic power of any star configuration, see Theorem \ref{7.formula}. 

To simplify the computations, in this section we also provide a number of {\em closed} formulas {\em solely in terms of $s,c,m$}. The second main result of this section is Theorem \ref{7.strands} where we provide such a closed formula for $\left \lceil\frac{m}{2}\right\rceil+1$ of the Koszul strands in the Betti table of $R/I_{c,\F}^{(m)}$. 
Since in total there are $m- \left\lceil\frac{m}{c}\right\rceil$ Koszul strands, then Theorem \ref{7.strands} provides a closed formula for more than half of the Betti table of $R/I_{c,\F}^{(m)}$. The remaining ones can often be computed directly (at least for relatively small $c$ and $m$), see Corollary \ref{7.small} and 
Examples \ref{7.7} and \ref{7.last}. 

On the other hand, already a closed formula -- depending only on $s,c,m$ -- for the very top strand of the Betti table of $R/I_{c,\F}^{(m)}$ appears to be extremely complicated, see Example \ref{7.extop} -- see also Proposition \ref{7.top} and Corollary \ref{7.c=4} for partial results.
\bigskip

First, from Theorem \ref{6.delta} we obtain the shape of the Betti table of $R/I_{c,\F}^{(m)}$.
\begin{Corollary}\label{7.stranded}
Let $R,c,s,\F$ be as in Setting \ref{4.set} and assume $\F$ consists of forms of the same degree $\delta$.  Then $R/I_{c,\F}^{(m)}$ has a Koszul stranded Betti table for any $m\geq 1$.

If additionally $I_{c,\F}$ is a linear star configuration, then $R/I_{c,\F}^{(m)}$ has a linearly stranded Betti table.
\end{Corollary}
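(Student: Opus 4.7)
The plan is to invoke two earlier results in sequence. First, by the hypothesis that every form in $\F$ has the same degree $\delta$, the last sentence of Theorem \ref{6.delta} yields that $I_{c,\F}^{(m)}$ has $\delta$-c.i. quotients with respect to the total order $\tau$ of Definition \ref{tau}. Corollary \ref{6.Koszul} asserts that any ideal with $\delta$-c.i. quotients has a Koszul stranded Betti table, and this stranded shape transfers from $I_{c,\F}^{(m)}$ to $R/I_{c,\F}^{(m)}$ since passing between the two merely shifts the homological degree by one and contributes the additional copy of $R$ in position $(0,0)$.

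For the second statement, observe that linear star configurations are precisely the case $\delta=1$. Thus the first part of the corollary applies with $\delta=1$, and, by the Example following Definition \ref{7.ci} (which identifies $1$-c.i. quotients with linear quotients, as also recorded in Corollary \ref{7.linear}), the Koszul stranded table with strand-shift $\delta=1$ is a linearly stranded Betti table in the sense of Definition \ref{7.Koszul}.

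There is no real obstacle here: both parts are immediate consequences of work already completed. The substantive content sits in Theorem \ref{6.delta}, where the $\delta$-c.i. quotient structure of $I_{c,\F}^{(m)}$ is established and the colon ideals are explicitly computed, and in Corollary \ref{6.Koszul}, whose proof via iterated mapping cones (extending the Eliahou--Kervaire argument along the lines of \cite{HT}, as recorded in Proposition \ref{6.rks}) translates the $\delta$-c.i. quotient property into the Koszul stranded shape. The only thing one should double-check while writing up is that the degrees of the generators of $I_{c,\F}^{(m)}$, which by Theorem \ref{4.Symb2}(1) lie in the arithmetic progression $\{\delta(t(s-c)+m)\mid \lceil m/c\rceil \le t \le m\}$, are consistent with the strand positions $n_h+\delta(i-1)$ predicted by Definition \ref{7.Koszul}; this is automatic from Proposition \ref{6.rks}.
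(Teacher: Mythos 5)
Your proof is correct and matches the paper's intended (implicit) argument: Theorem \ref{6.delta} gives $\delta$-c.i.\ quotients, Corollary \ref{6.Koszul} converts that to a Koszul stranded Betti table, and specializing $\delta=1$ gives the linearly stranded case. One small remark: no ``transfer'' from $I$ to $R/I$ is actually needed, since Definition \ref{7.Koszul} already phrases the Koszul stranded condition in terms of $\beta_{i,j}(R/I)$, so Corollary \ref{6.Koszul} delivers exactly the claimed conclusion.
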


\begin{Example}\label{7.exa2}
Let $I'$ and $J'$ be the ideals of Example \ref{6.exa2}. Then the Betti table of $R/I'$ has the form $\beta_I$ of Example \ref{6.exa}, and the Betti table of $R/J'$ has the form $\beta_J$ showed in Example \ref{6.exa}. 
\end{Example}

One can then determine immediately the Castelnuovo--Mumford regularity of these ideals.

\begin{Corollary}\label{6.reg}
Let $R,c,s,\F$ be as in Setting \ref{4.set} and assume $\F$ consists of forms of the same degree $\delta$.  
Then $${\rm reg}(R/I_{c,\F}^{(m)})=\delta m (s-c + 1) + (c-1)(\delta -1) -1.$$
\end{Corollary}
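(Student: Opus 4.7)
The plan is to deduce the regularity by reading the Betti-table structure produced by the $\delta$-c.i. quotient decomposition of Theorem \ref{6.delta} together with the Eliahou--Kervaire--type formula of Proposition \ref{6.rks}. By iterated mapping cones, each minimal generator $M \in \Gc$ contributes to the minimal free resolution of $R/I_{c,\F}^{(m)}$ a Koszul strand in which the syzygy at homological degree $i$ sits in internal degree $\deg(M) + \delta(i-1)$ for $1 \leq i \leq |\set(M)|+1$. Since $\mathrm{reg}(R/I) = \max\{j - i : \beta_{i,j}(R/I) \neq 0\}$ and Betti numbers cannot cancel under mapping cones in the Koszul stranded set-up, the ``top'' position of the strand attached to $M$ contributes the value $\deg(M) + (\delta - 1)|\set(M)| - 1$, and so
\[
\mathrm{reg}(R/I_{c,\F}^{(m)}) \;=\; \max_{M \in \Gc}\bigl\{\deg(M) + (\delta-1)|\set(M)| - 1\bigr\}.
\]

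Next, I would bound both factors uniformly over $\Gc$. By Theorem \ref{4.Symb2}(1), every $M \in \Gc$ satisfies $\deg(M) = \delta\bigl(\lambda(M)(s-c) + m\bigr)$ with $\lambda(M) \leq m$, so $\deg(M) \leq \delta m(s-c+1)$, with equality iff $P(M) = [1,1,\ldots,1]$ is the all--ones partition of length $m$. The final paragraph of the proof of Theorem \ref{6.delta} shows that $|\set(M)| \leq c-1$ for every $M \in \Gc$ (the set is forced to be a regular sequence cut out from $\F$). Substituting both bounds yields the upper estimate $\delta m(s-c+1) + (\delta-1)(c-1) - 1$.

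Finally, I would exhibit a single generator realizing this bound, so that no slack is lost in the maximization. By the surjectivity of $P$ in Corollary \ref{6.surj}, there is some $M \in \Gc$ with $P(M) = [1,1,\ldots,1]$; such an $M$ has $\deg(M) = \delta m(s-c+1)$. For this partition, Lemma \ref{6.part2} gives index of overlap $i_0 = 1$ with $d_{i_0} = d_t = 1$, so Theorem \ref{6.delta}(2) yields $|\set(M)| = c - d_{i_0} = c - 1$. Both bounds are saturated simultaneously at this $M$, and the formula for $\mathrm{reg}(R/I_{c,\F}^{(m)})$ follows.

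The main obstacle I anticipate is the bookkeeping in the first step: one has to match carefully the grading convention of Proposition \ref{6.rks} with the step-size $\delta$ of the Koszul strands in the Betti table, ensuring that the top of each strand does indeed survive in the minimal resolution (not merely in the mapping-cone resolution). Once this is in place, the remainder is a one-line optimization over the invariants $\lambda(M)$ and $d_{i_0}(M)$ tabulated in Theorems \ref{4.Symb2} and \ref{6.delta}.
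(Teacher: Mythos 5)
Your argument is correct and is, as far as I can tell, exactly the ``immediate'' deduction the paper has in mind (the paper offers no explicit proof, just a remark following Corollary \ref{7.stranded}). The reduction $\mathrm{reg}(R/I_{c,\F}^{(m)}) = \max_{M\in\Gc}\{\deg(M) + (\delta-1)|\set(M)|-1\}$ is what Proposition \ref{6.rks} supplies once one reads its index off correctly as $j = \delta(\deg_{\F}(f_\ell)+i-1)$, i.e.\ the Koszul strands have step size $\delta$; since that proposition already asserts that the iterated mapping cone is the \emph{minimal} resolution, the concern about surviving Betti numbers is moot. The two uniform bounds $\deg(M)\le \delta m(s-c+1)$ (from $\lambda(M)\le m$ in Theorem \ref{4.Symb2}(1)) and $|\set(M)|\le c-1$ (last paragraph of the proof of Theorem \ref{6.delta}) are both tight, and you correctly observe they are saturated simultaneously at the all-ones partition. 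One small caveat: your appeal to Theorem \ref{6.delta}(2) for the partition $[1,\ldots,1]$ is accurate only when $c\ge 2$ and $m\ge 2$; when $m=1$ (or $c=1$), $[1,\ldots,1]$ \emph{is} the maximal partition, $i_0$ is undefined, and one must invoke Theorem \ref{6.delta}(1) (equivalently Proposition \ref{6.m=1}) instead, maximizing $m(M_{(r)})$ over the $\binom{s}{c-1}$ squarefree generators to still reach $|\set(M)| = c-1$. The conclusion is unchanged, but this case split is worth a sentence.
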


Recall that for a partition $[\ul{d}] \in \Pc$, the number $i_0$ denotes the index of overlap. We can now prove the explicit formula for the graded Betti numbers of $R/I_{c,\F}^{(m)}$.
\begin{Theorem}\label{7.formula}[The graded Betti numbers of the symbolic powers of star configurations]
Let $R,c,s,\F$ be as in Setting \ref{4.set} and assume the forms in $\F$ all have degree $\delta$. Fix $m>1$.

For any fixed integer $1\leq i \leq c$, for any $\left\lceil\frac{m}{c}\right\rceil< t \leq m$ let $j_i:=\delta(t(s-c)+m+i-1)$, then
$$
\begin{array}{lcl}
\beta_{i,j_i}(R/I_{c,\F}^{(m)}) &= &\sum\limits_{[\ul{d}]\in [\Pc]_t,\,d_t=d_{i_0}} \binom{c-d_{i_0}}{i-1}\binom{s}{s-c+d_1}\binom{s-c+d_1}{s-c+d_2}\cdots \binom{s-c+d_{(i_0-1)}}{s-c+d_{i_0}}\\
&&\\
 &+ &\sum\limits_{[\ul{d}]\in [\Pc]_t\setminus\{[c,\ldots,c,r]\},\,d_t<d_{i_0}} \binom{s}{s-c+d_1} \cdots \binom{s-c+d_{(i_0-1)}}{s-c+d_{i_0}} A_{i-1}([\ul{d}])\\
 &&\\
 & + & \delta_{t,\left\lceil\frac{m}{c}\right\rceil} \binom{s}{c-r+1-i}\binom{s-c+r+i-2}{i-1}.
 \end{array}
$$
where $A_{i-1}([\ul{d}]):=\sum\limits_{j=i-1}^{d_{i_0}-d_t}\binom{c-d_{i_0}+j}{i-1}\binom{s-c+d_t+j-1}{j}$, and $\delta_{u,v}$ is Kronecker's delta (i.e. the last summand $ \binom{s}{c-r+1-i}\binom{s-c+r+i-2}{i-1}$ only appears when $t=\left\lceil\frac{m}{c}\right\rceil$).
\end{Theorem}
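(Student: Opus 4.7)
The plan is to combine Theorem \ref{6.delta} with an Eliahou--Kervaire-type formula for ideals with $\delta$-c.i.\ quotients, extending Proposition \ref{6.rks} by the identical iterated mapping-cone argument: now each minimal generator $h$ of polynomial degree $d$ with $|\set(h)|=k$ contributes $\binom{k}{i-1}$ copies of $R(-(d+\delta(i-1)))$ in homological degree $i$ of the minimal resolution of $R/I$. By Theorem \ref{6.delta} this applies to $I_{c,\F}^{(m)}$ with respect to the order $\tau$, and by Theorem \ref{4.Symb2}(1) the generators of degree $\delta(t(s-c)+m)$ are precisely those whose normal form has length $t$, which correspond via the partition map $P\colon \Gc\to\Pc$ of Corollary \ref{6.surj} to the elements of $[\Pc]_t$. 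Consequently
\begin{equation*}
\beta_{i,j_i}(R/I_{c,\F}^{(m)})=\sum_{[\ul{d}]\in[\Pc]_t}\ \sum_{M\colon P(M)=[\ul{d}]}\binom{|\set(M)|}{i-1},
\end{equation*}
and the task reduces to evaluating the inner sum for each partition using the three cases of Theorem \ref{6.delta}.

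For a non-maximal partition $[\ul{d}]=[d_1,\ldots,d_t]$, Lemma \ref{6.part2}(2) forces $\supp(M_{(d_j)})=\supp(M_{(d_{i_0})})$ for every $i_0\le j\le t-1$, so specifying the normal form of $M$ amounts to (i) choosing the descending chain of supports $\supp(M_{(d_1)})\supseteq\cdots\supseteq\supp(M_{(d_{i_0})})$ in $\binom{s}{s-c+d_1}\binom{s-c+d_1}{s-c+d_2}\cdots\binom{s-c+d_{i_0-1}}{s-c+d_{i_0}}$ ways, and, when $d_t<d_{i_0}$, additionally (ii) a size-$(s-c+d_t)$ subset $\supp(M_{(d_t)})\subseteq\supp(M_{(d_{i_0})})$. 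In the sub-case $d_t=d_{i_0}$, Theorem \ref{6.delta}(2) gives $|\set(M)|=c-d_{i_0}$ uniformly, producing the first summand in the statement. In the sub-case $d_t<d_{i_0}$, I would stratify the choices in (ii) by the index $m_0:=m_{\supp(M_{(d_{i_0})})}(M_{(d_t)})$ of the last form; writing $j=m_0-(s-c+d_t)\in\{0,\ldots,d_{i_0}-d_t\}$, there are $\binom{s-c+d_t+j-1}{j}$ subsets with that maximum, and Theorem \ref{6.delta}(3) gives $|\set(M)|=c-d_{i_0}+j$. Summing $\binom{c-d_{i_0}+j}{i-1}\binom{s-c+d_t+j-1}{j}$ over $j$ yields $A_{i-1}([\ul{d}])$ (with the vanishing of terms below $j=i-1-(c-d_{i_0})$ by the usual binomial convention), producing the second summand.

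Finally, for the maximal partition $[c,\ldots,c,r]$, which is the unique element of $[\Pc]_{\lceil m/c\rceil}$, only $M_{(r)}$ admits nontrivial choices since each $M_{(c)}$ is forced to equal $F_1\cdots F_s$. Theorem \ref{6.delta}(1) gives $|\set(M)|=m(M_{(r)})-s+c-r$; stratifying by $p:=m(M_{(r)})\in\{s-c+r,\ldots,s\}$ produces the contribution $\sum_{p=s-c+r}^{s}\binom{p-1}{s-c+r-1}\binom{p-s+c-r}{i-1}$. The technical obstacle -- and the only step requiring ingenuity -- is recognizing this as the claimed closed form; I would do so via the trinomial rewrite
\begin{equation*}
\binom{p-1}{s-c+r-1}\binom{p-s+c-r}{i-1}=\binom{p-1}{s-c+r+i-2}\binom{s-c+r+i-2}{i-1}
\end{equation*}
followed by the hockey-stick identity $\sum_{p=s-c+r}^{s}\binom{p-1}{s-c+r+i-2}=\binom{s}{s-c+r+i-1}=\binom{s}{c-r+1-i}$. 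Collecting the three types of contributions, and observing that the two non-maximal sums are vacuous exactly when $t=\lceil m/c\rceil$ (so the Kronecker term alone accounts for that stratum), yields the stated formula.
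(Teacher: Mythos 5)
Your overall approach matches the paper's: apply the Eliahou--Kervaire-type formula (Proposition~\ref{6.rks}) with the explicit $|\set(M)|$ computed in Theorem~\ref{6.delta}, group the generators by partition and homogeneous degree via Corollary~\ref{6.surj}, and evaluate the three cases separately. Your treatment of the first two summands (counting the chain of supports, then stratifying the bottom squarefree factor $M_{(d_t)}$ by the offset $j$ of its last index) is exactly the paper's argument. For the maximal-partition term you take a genuinely different, and arguably cleaner, route: the paper simply identifies the contribution with the known graded Betti numbers of $R/I_{c-r+1,\F}$ (citing Proposition~\ref{6.m=1} and prior results), whereas you re-derive the closed form from scratch by stratifying over $p=m(M_{(r)})$ and applying the trinomial revision and hockey-stick identities. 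Your binomial computation is correct, and this makes the proof more self-contained.

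There is, however, one factual error you should fix. You assert that $[c,\ldots,c,r]$ ``is the unique element of $[\Pc]_{\lceil m/c\rceil}$'' and, in your closing sentence, that ``the two non-maximal sums are vacuous exactly when $t=\lceil m/c\rceil$.'' Both claims are false in general: as Example~\ref{7.extop} in the paper shows, for $m=19$, $c=6$ there are six partitions of the minimal length $4$, only one of which is maximal, and the first two summands contribute nontrivially at that top stratum. This misstatement does not actually invalidate the body of your argument --- since you compute the contribution partition by partition, the non-maximal partitions of length $\lceil m/c\rceil$ are already absorbed into the first two summands, and the Kronecker delta is there only to switch on the maximal-partition term at the single value of $t$ where it can occur --- but the parenthetical justification should be replaced accordingly: the Kronecker factor encodes that the maximal partition has length exactly $\lceil m/c\rceil$, not that it is the only partition of that length.
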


\begin{proof} By Proposition \ref{6.rks}, for every fixed $i$ to determine all the graded Betti numbers $\beta_{i,j_i}(R/I_{,c\F}^{(m)})$ it suffices to add all $|\set(M)_{i-1}|$ where $M$ ranges in $\Gc$. Notice that if $|\set(M)|=j$, then $|\set(M)_{i-1}|=\binom{j}{i-1}$. 

 {\bf The first summand.} We prove that the first summand is the number of copies of $R$ in  $\beta_{i, j_i}(R/I_{c,\F}^{(m)})$ obtained from all elements $M\in \Gc$ whose partition $P(M)=[d_1,\ldots,d_t]$ has $d_t=d_{i_0}$ (via Proposition \ref{6.rks}).

First, by Lemma \ref{6.part2}, for every $[d_1,\ldots,d_t]\in \Pc$, either $d_{i_0}=d_t$ or $d_{i_0}<d_t$. For any $[\ul{d}]\in [\Pc]_t$ with $d_{i_0}=d_t$ and every $M\in \Gc$ with $P(M)=[\ul{d}]$ we have $|\set(M)|=c-d_{i_0}$ by Theorem \ref{6.delta}(2). For every such partition $[\ul{d}]$ we have 
{\small $$\binom{s}{s-c+d_1}\binom{s-c+d_1}{s-c+d_2} \cdots \binom{s-c+d_{(i_0-1)}}{s-c+d_{i_0}} \binom{s-c+d_{i_0}}{s-c+d_{(i_0+1)}} \cdots \binom{s-c+d_{(t-1)}}{s-c+d_{t}}$$} monomials $M\in \Gc$ with $P(M)=[\ul{d}]$. Since however $d_j=d_{i_0}$ for every $i_0\leq j \leq t$ by assumption, then this number equals $\binom{s}{s-c+d_1}\binom{s-c+d_1}{s-c+d_2} \cdots \binom{s-c+d_{(i_0-1)}}{s-c+d_{i_0}}$.

This number of monomials multiplied by $|\set(M)_{i-1}|=\binom{c-d_{i_0}}{i-1}$ is the first summand, which then is the contribution of these generators, in terms of copies of $R$, to $\beta_{i,j_i}(R/I_{c,\F}^{(m)})$ (via Proposition \ref{6.rks}).

{\bf The second summand.} We show that the second summand  is the number of copies of $R$ in $\beta_{i,j_i}(R/I_{c,\F}^{(m)})$ obtained from all elements $M\in \Gc$ whose partition $P(M)=[d_1,\ldots,d_t]$ has $d_t<d_{i_0}$ (via Proposition \ref{6.rks}), except, possibly, the maximum partition $[c,\ldots,c,r]$, which is counted separately in the last summand.

For each monomial $M\in \Gc$ with $P(M)=[d_1,\ldots,d_t]$ not maximal in $\Pc$, and with $d_t<d_{i_0}$ we have $|\set(M)|=c - d_{i_0} + j$ for some $0 \leq j \leq d_{i_0}-d_t$, by Theorem \ref{6.delta}(3), and

$$|\set(M)|=c-d_{i_0} + j \text{ if and only if }m_{\supp(M_{(d_{i_0})})}(M_{(d_t)})=s-c+d_t + j,$$
i.e. if and only if the smallest variable in the squarefree monomial $\supp(M_{(d_t)})$ of degree $s-c+d_t$ is the $(s-c+d_t + j)$-th smallest variable in $\supp(M_{(d_{i_0})})$. Thus, if we fix $M_{(d_1)},\ldots,M_{(d_{i_0})}$, then for every $0\leq j \leq d_{i_0}-d_t$ there are precisely $\binom{s-c+d_t+j-1}{j}$ possibilities for $M_{(d_t)}$, one for each squarefree monomial of degree $s-c+d_t$ whose smallest variable is the $(s-c+d_t + j)$-th smallest variable in $\supp(M_{(d_{i_0})})$.
Adding them together we obtain 
$$
\sum_{j=i-1}^{d_{i_0}-d_t} \binom{c-d_{i_0}+j}{i-1}\binom{s-c+d_t+j-1}{j}
$$
which is the contribution to $\beta_{i,j_i}(R/I_{c,\F}^{(m)})$ given by these monomials.

We then only need to count the possibilities we have for $M_{(d_1)},\ldots,M_{(d_{i_0})}$, which is easily seen to be  
$$\binom{s}{s-c+d_1}\binom{s-c+d_1}{s-c+d_2} \cdots \binom{s-c+d_{(i_0-1)}}{s-c+d_{i_0}}.$$ 
Then the contribution of any non-maximal partition with $d_t<d_{i_0}$ to 
$\beta_{i,j_i}(R/I_{c,\F}^{(m)})$ (via Proposition \ref{6.rks}) is precisely the second summand
$$\binom{s}{s-c+d_1}\binom{s-c+d_1}{s-c+d_2} \cdots \binom{s-c+d_{(i_0-1)}}{s-c+d_{i_0}} A_{i-1}([\ul{d}]).$$

{\bf The third summand.} if $t>\left\lceil\frac{m}{c}\right\rceil$, then by Proposition \ref{6.rks}, the sum of the first two summands in the statement gives the formula for the graded Betti numbers $\beta_{i,j_i}(R/I_{c,\F}^{(m)})$. If, however, $t=\left\lceil\frac{m}{c}\right\rceil$, we still have to count the contribution given by the generators $M$ where $P(M)=[c,\ldots,c,r]$ is the maximal partition. 
By Theorem \ref{6.delta}(1), for each of them we have $|\set(M)|=|\set(M_{(r)})|$. Since $M_{(r)}$ are the possible minimal generators of $I_{c-r+1,\F}$, then the graded Betti numbers obtained from the generators $M\in \Gc$ associated to the maximal partition are precisely the graded Betti numbers of $R/I_{c-r+1,\F}$, i.e. $\binom{s}{c-r+i-1}\binom{s-c+r-1+i-1}{i-1}$. This explains the last summand, and concludes the proof.

\end{proof}

\begin{Example}\label{7.extop}
Take $m=19$ and $c=6$. Then the top Koszul strand of $R/I_6^{(19)}$ is determined by the monomials of smallest degree, namely the ones associated to the partitions $[d_1,\ldots,d_t]\in \Po_{\leq 6}(19)$ of length $t=\left\lceil\frac{m}{c}\right\rceil=4$. These partitions are 
$$
[6,6,6,1],\;\;[6,6,5,2],\;\;[6,6,4,3],\;\;[6,5,5,3],\;\;[6,5,4,4],\;\;[5,5,5,4].
$$
Thus, there are $\binom{s}{5} + s\binom{s-1}{s-4} + \binom{s}{2}(s-2) + s\binom{s-1}{s-3} + s(s-1) + s(s-1)$ 
minimal generators of the smallest possible degree. Let $j_i:=\delta(t(s-c)+m+i-1)$ with $t$ as above.
Then the top Koszul strand is
$$
\begin{array}{ll}
\beta_{i,j_i}(R/I_6^{(19)}) = & \binom{s}{6-i}\binom{s-6+i-1}{i-1} +  s\sum\limits_{j=i-1}^3\binom{j+1}{i-1}\binom{s-5+j}{j} +  \binom{s}{2}\sum\limits_{j=i-1}^1\binom{j+2}{i-1}\binom{s-4+j}{j} \\
& \qquad +\,  s\sum\limits_{j=i-1}^2\binom{j+1}{i-1}\binom{s-4+j}{j} +  2s\sum\limits_{j=i-1}^2\binom{j+1}{i-1}\binom{s-3+j}{j}.
\end{array}
$$
\end{Example}

\begin{proof} We compute the contribution of each partition via Theorem \ref{7.formula}.

The contribution of the top partition to $\beta_i(R/I_6^{(19)})$ is the $i$-th Betti number of $R/I_6$, i.e. $\binom{s}{6-i}\binom{s-6+i-1}{i-1}$. All other partitions have $d_{i_0}<d_t$. For the second partition we have $i_0=3$, $d_{i_0}=5$ and $d_t=2$, thus its contribution to $\beta_i(R/I_6^{(19)})$ is 
$$
\binom{s}{s-1} A_{i-1}([6,6,5,2]) = s\sum_{j=i-1}^3\binom{j+1}{i-1}\binom{s-5+j}{j}.
$$
Analogously, for the third partition we have $i_0=3$, $d_{i_0}=4$ and $d_t=3$. Then, its contribution is $\binom{s}{s-2} A_{i-1}([6,6,4,3])$. For the fourth partition we have $i_0=2$, $d_{i_0}=5$ and $d_t=3$, thus  its contribution is $\binom{s}{s-1} A_{i-1}([6,5,5,3])$.
In the fifth partition we have $i_0=2$, $d_{i_0}=5$ and $d_t=4$. Its contribution is $\binom{s}{s-1} A_{i-1}([6,5,4,4])$. For the last partition we have $i_0=1$, $d_{i_0}=5$ and $d_t=4$, then its contribution is $\binom{s}{s-1} A_{i-1}([5,5,5,4])$. 

The formula is now obtained by adding these numbers and observing that $$A_{i-1}([5,5,5,4]) = A_{i-1}([6,5,4,4]) = \sum\limits_{j=i-1}^2\binom{j+1}{i-1}\binom{s-3+j}{j}.$$

\end{proof}

\subsection{Closed formulas in terms of $c,m,s$.} Theorem \ref{7.formula} gives the formula for all graded Betti numbers of $R/I_{c,\F}^{(m)}$. It is however easier to work with a closed formula solely written in terms of $c,s,m$, at least for some of the Koszul strands, or for small values of $c$ and $m$. This is the goal of this subsection. 

In Theorem \ref{7.strands} we provide such a closed formula for more than half of the Betti table of $R/I^{(m)}$, including the last strand with ``irregular" behavior. In Proposition \ref{7.top} we provide a closed formula for the top strand in several situation, and illustrate why a closed formula in general may be extremely complicated to obtain (assuming it exists). We employ these results to provide explicit examples, see Examples \ref{7.7} and \ref{7.last}. 

We can for instance quickly show that many generators $M$ have $|\set(M)|=c-1$.
\begin{Lemma}\label{7.suff}
Let $M\in \Gc$. If $P(M)=[d_1,\ldots,d_t]$ has $d_{t-1}=1$ then 
$|\set(M)|=c-1$.
\end{Lemma}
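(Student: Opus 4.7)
The plan is to apply Theorem \ref{6.delta} directly, and the entire argument boils down to identifying $d_{i_0}$. First I would observe that the hypothesis $d_{t-1}=1$ forces $d_t=1$ as well: partitions in $\Pc$ have non-increasing positive entries, so $1\leq d_t\leq d_{t-1}=1$.

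Next, I would pin down $d_{i_0}$, where $i_0=i_0(P(M))$ is the index of overlap. I claim $d_{i_0}=1$. For a contradiction, suppose $d_{i_0}\geq 2$. Then $d_t=1<d_{i_0}$, so Lemma \ref{6.part2}(2) applies and gives $d_j=d_{i_0}$ for every $i_0\leq j\leq t-1$. Taking $j=t-1$ (which lies in the allowed range since Lemma \ref{6.part2}(1) guarantees $i_0\leq t-1$), we obtain $d_{t-1}=d_{i_0}\geq 2$, contradicting the hypothesis $d_{t-1}=1$. Hence $d_{i_0}=1=d_t$.

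With $d_{i_0}=d_t$ in hand, I would appeal to Theorem \ref{6.delta}(2), which immediately yields $|\set(M)|=c-d_{i_0}=c-1$. (Note the remark preceding the proof of Theorem \ref{6.delta} observing that one need not separately exclude the maximal partition in case (2), since for $P(M)=[c,\ldots,c]$ formulas (1) and (2) coincide, and in any event the only maximal partition with $d_{t-1}=1$ would require $c=1$, for which $c-1=0$ matches trivially.)

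No step here looks delicate: everything is a direct application of the structural lemmas from Section~5 together with the formula from Theorem \ref{6.delta}. The only mild subtlety is remembering to use $d_t\leq d_{t-1}$ to force $d_t=1$ before invoking Lemma \ref{6.part2}(2); once that is observed, the contradiction argument pinning down $d_{i_0}$ is essentially automatic.
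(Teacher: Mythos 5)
Your proof is correct. The only difference from the paper's argument is in how you pin down $d_{i_0}=1$: you invoke Lemma~\ref{6.part2}(2) via contradiction (if $d_{i_0}>d_t$ then $d_{t-1}=d_{i_0}\geq 2$), whereas the paper explicitly constructs a witness partition $[\ul{b}]$ of length $t-1$ agreeing with $[\ul{d}]$ through the last index $h$ where $d_h>1$, then concludes $i_0\geq h+1$, so $d_{i_0}=1$. Your route is more economical since it leans directly on the structural fact already proved in Lemma~\ref{6.part2}(2) rather than re-deriving a consequence of Lemma~\ref{6.part1}; both routes end by citing Theorem~\ref{6.delta}(2). Your parenthetical note about the maximal partition is also correct and consistent with the remark made in the proof of Theorem~\ref{6.delta}.
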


\begin{proof}
Let $h$ be the largest index such that $d_h>1$, notice that $h\leq t-2$ and $d_{h+1}=1$. Let $[\ul{b}]$ be the partition of length $t-1$, defined by setting $b_j=d_j$ for $1\leq j \leq h$, $b_{h+1}=2$ and $b_j=1$ for $h+2\leq j \leq t-1$. It is easily seen that $[\ul{b}] \in \Pc$ and $[\ul{b}]>_{\alex}[\ul{d}]$. In particular, this shows that $[\ul{d}]$ is not maximal in $\Pc$. 

Let $i_0:=i_0([\ul{d}])$ be the index of overlap (see Definition \ref{6.i0}). The existence of $[\ul{b}]$ yields that  $i_0\geq h+1$, thus $d_{i_0}=1=d_t$. The statement now follows by Theorem \ref{6.delta}(2).

\end{proof}

In Theorem \ref{4.Symb2} we proved that $I_{c,\F}^{(m)}$ is generated in degrees $\delta(t(s-c)+m)$ for any $\left\lceil\frac{m}{c} \right\rceil \leq t \leq m$. We also provided  a combinatorial way to determine the number of generators in each degree, i.e. to count the elements in $U_t = \{M\in \Gc\,\mid\, \deg(M)=\delta(t(s-c)+m)\}$ for any $\left\lceil\frac{m}{c} \right\rceil \leq t \leq m$. We now provide a closed formula for $|U_t|$ solely in terms of $c,s,m$, for more than half of the $U_t$'s.

\begin{Theorem}\label{7.binom}
Let $R,c,s,\F$ be as in Setting \ref{4.set}, and assume $\F$ consists of forms of the same degree $\delta$. Fix $m\geq 1$. 
\begin{enumerate}
\item For any $t> m/2$, the ideal $I_{c,\F}^{(m)}$ has  
$$ \binom{s}{c-1} \binom{m-t+c-2}{m-t}$$
minimal generators of degree $\delta(t(s-c)+m)$.

\item If $t=m/2$ and $m$ is even, then $I_{c,\F}^{(m)}$ has  
$$ \binom{s}{c-2} + \binom{s}{c-1}\left(\binom{c-2+t}{t} - (c-1)\right)$$
minimal generators of degree $\delta(t(s-c)+m)$.
\end{enumerate}
\end{Theorem}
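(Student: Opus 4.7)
The approach is to parameterize minimal generators of $I_{c,\F}^{(m)}$ of degree $\delta(t(s-c)+m)$ by chains of subsets of $[s]$, and then exploit a crucial observation valid only when $t \geq m/2$. By Theorem \ref{4.Symb} (see also the proof of Theorem \ref{4.Symb2}), each such generator $M$ of length $\lambda(M)=t$ corresponds, via its normal form $M = M^{(1)} \cdots M^{(t)}$, to a chain $S_1 \supseteq S_2 \supseteq \cdots \supseteq S_t$ of subsets of $[s]$ with $|S_j| = s - c + d_j$, where $[d_1,\ldots,d_t] \in \Pc$. Under this correspondence, $M$ is completely determined by the chain.

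The key observation is that if $d_t \geq 2$, then $m = \sum_j d_j \geq 2t$. Hence $t > m/2$ forces $d_t = 1$, while $t = m/2$ forces either $d_t = 1$ or $d_j = 2$ for all $j$. Whenever $d_t = 1$, I set $\sigma := S_t$ (with $|\sigma|=s-c+1$) and $T_j := S_j \setminus \sigma$ for $j < t$; this yields a bijection between such generators and pairs $(\sigma,\, T_1 \supseteq \cdots \supseteq T_{t-1})$, where $(T_j)$ is a chain in the $(c-1)$-element set $U := [s]\setminus\sigma$ satisfying $\sum_{j=1}^{t-1} |T_j| = m-t$. To count chains in $U$, I encode them by functions $\phi: U \to \{0,1,\ldots,t-1\}$ with $\phi(u) := \max\{j : u \in T_j\}$ (and $\phi(u)=0$ if $u \notin T_1$); then $\sum_{u \in U} \phi(u) = m-t$.

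For part (1), $t > m/2$ gives $m - t < t$, so the upper constraint $\phi(u) \leq t-1$ is vacuous, and the number of admissible $\phi$ equals the number of weak compositions of $m-t$ into $c-1$ parts, namely $\binom{m-t+c-2}{m-t}$. Multiplying by the $\binom{s}{c-1}$ choices of $\sigma$ yields the formula. For part (2), the case $d_j=2$ for all $j$ contributes exactly $\binom{s}{c-2}$ (the single choice of $S_1 = \cdots = S_t$ of size $s-c+2$). The remaining generators fall in the regime $d_t=1$ and require $\sum_u \phi(u) = m-t = t$; now the bound $\phi(u)\leq t-1$ does bind, but any violating $\phi$ must take the value $t$ at a single point $u_0$ and be zero elsewhere (since the total is exactly $t$), producing precisely $c-1$ forbidden functions. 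Hence the corrected count is $\bigl(\binom{c-2+t}{t}-(c-1)\bigr)$, and multiplying by $\binom{s}{c-1}$ and adding the $\binom{s}{c-2}$ from the first case gives the stated formula.

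The main obstacle is cleanly establishing the bijection, namely that the correspondence $M \leftrightarrow (S_t, (T_j))$ genuinely recovers minimal generators in $\Gc$ of the prescribed degree: the minimality condition $|\supp(M^{(j)})| \geq s-c+1$ from Theorem \ref{4.Symb} is automatic since $|S_j|\geq|S_t|=s-c+1$, and distinctness of the associated monomials follows from the uniqueness of normal forms (Theorem \ref{3.NF}). The delicate arithmetic in part (2), where the boundary correction is exactly $c-1$ — precisely matching the cardinality of $U$ — is a pleasant consequence of the tightness of the inequality $m-t \leq t-1$ at $t=m/2$.
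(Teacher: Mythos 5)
Your proof is correct and follows essentially the same route as the paper's. The paper's bijection $\Psi(M)=M/N^t$ between $U_{t,N}$ and monomials of $\F'$-degree $m-t$ in the $(c-1)$-element set $\F'=\supp(N)^C$ is exactly your correspondence $M\leftrightarrow(\sigma,\phi)$ once $\phi$ is identified with the exponent vector of $M/N^t$, and the correction term in part (2) — the paper's $c-1$ excluded pure powers $F^t$, your $c-1$ forbidden functions concentrated at a single point — is the same; your chain-of-subsets encoding just repackages the bijection so that the normal-form verification the paper carries out for $QN^t$ is absorbed into the parameterization.
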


 For part (2), the assumption that $m$ is even is only needed to ensure that $t=m/2$ is an integer. Also, interestingly, when $m$ is odd, the Koszul strand associated to $t=(m+1)/2$ starts with the ``expected" number of generators (i.e. the number of generators follows the same formula ruling the strands below it), but all other graded Betti numbers follow a different rule (Theorem \ref{7.strands}(2)). See for instance the second strands in both ideals of Example \ref{7.7}.

\begin{proof}
For any monomial $M$ on $\F$, we define the $\F$-degree of $M$ as $\deg_{\F}(M)=d$ if $M$ is the product of $d$ (not necessarily distinct) elements of $\F$. It is then easily seen that 
$$U_t = \{M\in \Gc\,\mid\, \deg_{\F}(M)=t(s-c)+m\}.$$
 Also, since $\F$ allows a unique monomial support, for any $\F'\subseteq \F$ and any $d\geq 1$, the set of all monomials of degree $d$ in $\F'$ has precisely $\binom{d+|\F'|-1}{|\F'|-1}$ elements. 
Set $I=I_{c,\F}$, and let $G(I)$ denote the set of all minimal monomial generators of $I$.
 
(1) For any $M\in \Gc$, let $P(M)=[d_1,\ldots,d_t]$ we have $d_t=1$, since otherwise $m=|P(M)|=\sum_{j=1}^t d_j \geq 2t >m$. Thus for any $M\in U_t$ the normal form of $M$ is $M=M_{(d_1)}\cdots M_{(d_{t-1})}M_{(1)}$. Let 
$$
U_{t,N}:=\{M\in U_t\,\mid\, N\text{ is the last term in the normal form of }M\}.
$$
It follows from the above that $U_t = \bigcup_{N\in G(I)} U_{t,N}$; additionally, the sets $U_{t,N}$ are all disjoint because if $N\neq N'\in G(I)$, are the respective last terms of monomials $M,M' \in U_t$, then $M$ and $M'$ have different normal forms, thus in particular $M\neq M'$.

It follows that $|U_t| = \sum_{N\in G(I)} | U_{t,N}|$. Since there are $\binom{s}{s-c+1}=\binom{s}{c-1}$ distinct elements in $G(I)$, it suffices to prove that for any $N\in G(I)$ we have $| U_{t,N}| = \binom{m-t+c-2}{m-t}$. 

Fix $N\in G(I)$; for any $M\in U_{t,N}$ let $M=M_{(d_1)}\cdots M_{(d_{t-1})}N$ be its normal form. Let $\F':=\supp(N)^C$ and observe that $|\F'|=c-1$ (because $|\supp(N)|=s-c+1$). 

By all the above, the statement follows if we show that the map
$$\Psi: U_{t,N} \lra \{\text{monomials in }\F' \text{ of }\F'\text{-degree }=m-t\}$$
defined as $\Psi(M)=M/N^t$ is a bijection. We first prove that $\Psi$ is well-defined. By definition of normal form $N$ divides $M_{(d_j)}$ for every $j$, thus $M/N^t$ is a monomial in $\F'=\supp(N)^C$. Since $\supp(M_{(d_j)})=s-c+d_j$ for every $1\leq j \leq t-1$, then $M/N^t$ has degree 
$$\deg_{\F}(M) - \deg_{\F}(N^t)  = t(s-c)+m -t(s-c+1)=m-t.$$

Injectivity is easily seen: if $\Psi(M)=\Psi(M')$, then $\frac{M}{N^t} = \frac{M'}{N^t}$, thus $M=M'$. Finally, let 
$Q$ be a monomial in $\F'$ with $\deg_{\F'}(Q)=m-t$, we need to show that $M:=QN^t \in U_{t,N}$. Clearly, $M$ has $\deg_{\F}(M)=(m-t)+t=m$. 
We write its normal form. Since $Q$ may not be in some $\G_{c',(m')}$, we cannot use Notation \ref{5.1}, so we must use the notation of Definition \ref{3.monsupp}; so let  $Q=Q^{(1)}\cdots Q^{(r)}U$ be its normal form, and since $\deg_{\F'}=m-t$, then $r\leq m-t<t$ (the latter inequality holds because $2t>m$). Therefore $t-r>0$, so it is easily seen that $M$ has normal form
$$
M = (Q^{(1)}N)(Q^{(2)}N)\cdots (Q^{(r)}N),\;\; \text{ with }t-r \text{ copies of }N,
$$
i.e. $M^{(j)}=(Q^{(j)}N)$ for $j=1,\ldots,r$ and $M^{(j)}=N$, for $r+1\leq j \leq t$.

Since $\supp(Q^{(j)}) \cap \supp(N)= \emptyset$ for every $j$, then $|\supp(M^{(j)})|=s-c+1+|\supp(Q^{(j)})$, thus 
$$
\Sdeg(M)=\left(\sum_{j=1}^r (|\supp(Q^{(j)})|+1)\right) + (t-r)= (m-t + r)+(t-r)=m.
$$
It follows that $M\in \Gc$.  Finally, by the above $\lambda(M)=t$ and the last term in the normal form of $M$ is $N$, therefore 
$M\in U_{t,N}$. This proves that $\Psi$ is a bijection, and then  $|U_t| = \binom{s}{c-1}\binom{(m-t)+(c-2)}{m-t}$, as desired.

(2) is proved similarly. The difference is that now there are $\binom{s}{c-2}$ monomials $M\in \Gc$ with $P(M)=[2,\ldots,2]$. Therefore, we only need to determine the number of monomials $M\in \Gc$ with $\lambda(M)=t=m/2$ and $d_t=1$ (because if $d_t\geq 2$, then $\Sdeg(M)=d_1+\ldots+d_t>m$, which is a contradiction). Similarly to the above, for any of the $\binom{s}{c-1}$ elements $N\in \G_{c,(1)}$ there is a bijection 
$$
\Psi': U_{t,N} \lra \{\text{monomials in }\F' \text{ of }\F'\text{-degree }=m/2 \text{ that are not pure powers}\}
$$
where, again, $\Psi'(M):=M/N^t$, and by a pure power we mean a monomial of the form $F^t$ for some $F\in \F'$ (we need to remove these monomials, because the only way to obtain them would be from the partition $[2,\ldots,2]$ which has already been counted separately).
Then
$$
|U_{t,N}| = \binom{(c-1)+ m/2 -1}{m/2}- (c-1) =\binom{c-2+t}{t} - (c-1).
$$
Thus we obtain $|U_t| = \binom{s}{c-1} |U_{t,N}| + \binom{s}{c-2}$, which yields the desired formula.

\end{proof}

If $c=1$ then $I_{c,\F}$ is principal and then also $I_{c,\F}^{(m)}$ is principal and generated by an element of degree $\delta ms$, so we may assume $c\geq 2$. 

By Corollary \ref{7.stranded} the ideal $I_{c,\F}^{(m)}$ has a Koszul stranded Betti table with precisely $m - \left \lceil \frac{m}{c}\right \rceil + 1$ Koszul strands. Indeed
$$
\beta_{i,j}(R/I_{c,\F}^{(m)}) \neq 0 \text{ only if }j \in \left\{\delta(t(s-c)+m + i - 1)\,\mid \left\lceil\frac{m}{c} \right\rceil \leq t \leq m\right\}.
$$
To determine the Betti table it suffices to understand the graded Betti numbers of these strands. 

In Theorem \ref{7.strands}(1) we prove that  a bit more than half of the Koszul strands (the bottom half, approximately) have very regular graded Betti numbers. 
Additionally, in Theorem \ref{7.strands}(2), we prove that this regularity result is sharp, in the sense that the last ``irregular strand" is the first one left out from the formula in (1) (the one corresponding to $t=\left\lceil\frac{m}{2}\right\rceil$), 
and we compute all its graded Betti numbers. 
 
\begin{Theorem}\label{7.strands}
Let $R,c,s,\F$ be as in Setting \ref{4.set} and assume the forms in $\F$ all have degree $\delta$. Fix $m>1$ and assume $c\geq 2$. 
\begin{enumerate}
\item For any fixed integer $1\leq i \leq c$, one has 
$$
\beta_{i,j}(R/I_{c,\F}^{(m)}) =\binom{c-1}{i-1}\binom{c-2 + m-t}{c-2}  \binom{s}{c-1},$$
if $j=\delta(t(s-c)+m+i-1)$ for some $\left\lceil \frac{m}{2} \right\rceil < t \leq m.$

\item Let $t:=\left\lceil\frac{m}{2} \right\rceil$; for any $1\leq i \leq c$, let $j_i:=\delta\left( t(s-c)+m+i-1)\right)$.
\begin{enumerate}
\item if $m$ is odd, then $
\beta_{i,j_i}(R/I_{c,\F}^{(m)}) = \binom{c-1}{i-1}\binom{c-2 + m-t}{c-2}  \binom{s}{c-1} - \binom{c-2}{i-2}\binom{s}{c-2}
$;
\item If $m=2$, then $\beta_{i,j_i}(R/I_{c,\F}^{(2)})=\binom{s}{c-1-i}\binom{s-c+i}{i-1}$;
\item If $m=4$, then $$
\begin{array}{ll}
\beta_{i,j_i}(R/I_{c,\F}^{(4)})  &= \binom{c-2}{i-1}\binom{s}{c-2} \\
& \qquad + \binom{s}{c-3}\left[ \binom{c-3}{i-1} + \binom{c-2}{i-1}(s-c+1) + \binom{c-1}{i-1}\binom{s-c+2}{2}\right];
\end{array}$$

\item If $m$ is even and $m>4$, then {\small$$
\begin{array}{ll}
\beta_{i,j_i}(R/I_{c,\F}^{(m)}) & = \binom{c-2}{i-1}\binom{s}{c-2}  + \binom{c-1}{i-1}\binom{s}{c-1}\left[\binom{c-2 + t}{t} - (c-1)\right]\\
& \qquad -  \binom{c-2}{i-2}\binom{s}{c-3}(s-c+3).
\end{array}
$$}
\end{enumerate}
\end{enumerate}

\end{Theorem}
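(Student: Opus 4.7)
The plan is to reduce each graded Betti number $\beta_{i,j_i}$ to the combinatorial sum $\sum_{M\in U_t}\binom{|\set(M)|}{i-1}$ via Proposition~\ref{6.rks}, then compute $|\set(M)|$ via Theorem~\ref{6.delta} and count $|U_t|$ via Theorem~\ref{7.binom}. The key technical input is Lemma~\ref{7.suff}: whenever the partition $P(M)=[d_1,\ldots,d_t]$ has $d_{t-1}=1$, one has $|\set(M)|=c-1$.

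For Part~(1), I first observe that for $t>\lceil m/2\rceil$, every partition in $\Pc$ of length $t$ must have $d_{t-1}=1$. Indeed, if $d_{t-1}\geq 2$ then $d_j\geq 2$ for $j\leq t-1$, so $d_1+\cdots+d_{t-1}\geq 2(t-1)$ and the requirement $d_t\geq 1$ forces $t\leq (m+1)/2$, contradicting the hypothesis. Hence Lemma~\ref{7.suff} yields $|\set(M)|=c-1$ for every $M\in U_t$, and Proposition~\ref{6.rks} combined with $|U_t|=\binom{s}{c-1}\binom{m-t+c-2}{m-t}$ from Theorem~\ref{7.binom}(1) gives the formula immediately.

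For Part~(2) I write $U_t=U_t^{\rm reg}\sqcup U_t^{\rm exc}$, where $U_t^{\rm reg}$ consists of those $M\in U_t$ whose partition has $d_{t-1}=1$. A parallel counting argument shows the exceptional partitions are precisely $[2,\ldots,2,1]$ when $m$ is odd, and $[2,\ldots,2]$ together with $[3,2,\ldots,2,1]$ when $m$ is even. For each exceptional partition I enumerate the associated monomials as in the proof of Theorem~\ref{4.Symb2}(2) and compute $|\set(M)|$ via Theorem~\ref{6.delta}: part~(2) handles $[2,\ldots,2]$ (giving $|\set(M)|=c-2$), and part~(3) handles $[2,\ldots,2,1]$ and $[3,2,\ldots,2,1]$ (giving $|\set(M)|\in\{c-2,c-1\}$, according to whether the largest-indexed element of $\supp(M_{(d_{i_0})})$ lies in $\supp(M_{(d_t)})$). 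The small-parameter degenerations (2)(b) and (2)(c) are handled by direct inspection, where the Kronecker-delta contribution in Theorem~\ref{7.formula} enters because $t=\lceil m/c\rceil$.

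I expect the main obstacle to be collapsing the raw sums into the stated closed forms, especially in case~(2)(d). The relevant simplifications rely on Pascal-type identities such as $\binom{c-1}{i-1}-\binom{c-2}{i-1}=\binom{c-2}{i-2}$ and $(s-c+3)\binom{s}{c-3}=(c-2)\binom{s}{c-2}$, together with the packaged count of $|U_t|$ from Theorem~\ref{7.binom}(2). Once the exceptional contributions are subtracted from $|U_t^{\rm reg}|\binom{c-1}{i-1}$ and the resulting expression is massaged via these identities, the stated formulas drop out.
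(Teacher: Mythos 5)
Your proposal mirrors the paper's proof step by step: reduce to counting via Proposition~\ref{6.rks} and Theorem~\ref{7.binom}, invoke Lemma~\ref{7.suff} after checking that $d_{t-1}=1$ for all partitions of length $t>\lceil m/2\rceil$, identify the exceptional partitions $[2,\ldots,2,1]$, $[2,\ldots,2]$, $[3,2,\ldots,2,1]$, and compute $|\set(M)|$ via Theorem~\ref{6.delta}, with the same Pascal identity closing out part~(2)(d). One small inaccuracy in your exposition of (2)(c): the reason $m=4$ needs separate treatment is not primarily the Kronecker-delta term in Theorem~\ref{7.formula} (that only enters when $c\le 3$); it is that for $m=4$ the partition $[3,2,\ldots,2,1]$ degenerates to $[3,1]$, whose index of overlap is $i_0=1$ (not $2$), so $d_{i_0}=3$ and Theorem~\ref{6.delta}(3) yields $|\set(M)|\in\{c-3,c-2,c-1\}$ rather than $\{c-2,c-1\}$ --- which is exactly where the three binomial terms inside the bracket of the stated (2)(c) formula come from.
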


%
%

\begin{proof}
(1) By the above, for every $i$ we only need to show the formula for the graded Betti numbers when $j_i=\delta(t(s-c)+m+i-1)$ for some $\left\lceil \frac{m}{2} \right\rceil < t \leq m$. We then fix such a $j:=j_i$.

 By Proposition \ref{6.rks}, the only elements $M\in \Gc$ contributing to 
the graded Betti number $\beta_{i,j}(R/I_{c,s}^{(m)})$ are the ones in $U_t=\{M\in \Gc\,\mid\, \deg(M)=\delta(t(s-c)+m)\}$. By Theorem \ref{7.binom}, there are $\binom{s}{c-1} \binom{m-t+c-2}{m-t}$ elements in $U_t$. For any element $M\in U_t$, let $P(M)=[d_1,\ldots,d_t]$. First, we observe that the last two entries of $[\ul{d}]$ are $1$. Indeed, if not, then $d_{t-1}\geq 2$, so 
$$m = \sum_{h=1}^t d_h = \sum_{h=1}^{t-1} d_h  + d_t \geq 2(t-1) + 1\geq m +1 >m,$$ yielding a contradiction. 
Now, by Lemma \ref{7.suff} we have $|\set(M)|=c-1$, then
$$
\beta_{i,j}(R/I_{c,s}^{(m)}) = \binom{c-1}{i-1} |U_t| = \binom{c-1}{i-1}\binom{c-2 + m-t}{c-2}  \binom{s}{c-1}.
$$

(2)(a) Recall that $|U_{t}|= \binom{s}{c-1}\binom{c-2+m-t}{c-2}$ (this follows by Theorem \ref{7.binom}). 
For any $M\in U_{t}$ we compute $|\set(M)|$. Let $P(M)=[d_1,\ldots,d_t]$. First, we observe that if $d_1>2$, then $d_{t-1}=1$. Indeed, assume by contradiction that $d_{t-1}\geq 2$, then
$$
m=\sum_{h=1}^t d_h \geq 3 + \sum_{h=2}^t d_h \geq 3 + \sum_{h=2}^{t-1} d_h + 1 \geq 3 + 2(t-2) + 1= 2t=m+1
$$
which is impossible. Therefore, $d_{t-1}=1$, and then $|\set(M)|=c-1$ by Lemma \ref{7.suff}.

On the other hand, if $m$ is odd and $d_1=2$, since $t=(m+1)/2$ it follows that $P(M)=[2,2,\ldots,2,1].$
By Lemma \ref{6.part2}(1), the index of overlap is $i_0(P(M))=1$. Since $d_{i_0}=d_1=2$ while $d_t=1$, then by Theorem \ref{6.delta}(3) one has 
$$
|\set(M)|=c-2 + m_{\supp(M_{(2)})}(M_{(1)}) - (s-c+1).
$$
Fix any of the $\binom{s}{s-c+2}=\binom{s}{c-2}$ minimal generators $M_{(2)}$ of $I_{c-1,\F}$, write $M_{(2)}=F_{j_1}\cdots F_{j_{s-c+2}}$ with $1\leq j_1<\cdots < j_{s-c+2}\leq s$. Since $M_{(1)}$ divides $M_{(2)}$ and $|\supp(M_{(1)})|=s-c+1$, then 
$$
m_{\supp(M_{(2)})}(M_{(1)}) = \left\{ \begin{array}{ll}
s-c+1, & \text{ if } M_{(1)} = F_{j_1}\cdots F_{j_{s-c+1}}\\
s-c+2, & \text{ otherwise }
\end{array}
\right.
$$
Therefore, there are precisely $\binom{s}{c-2}$ monomials $M$ with $P(M)=[2,\ldots,2,1]$ and $|\set(M)|=c-2$, while the other $\left(\binom{s}{c-1}\binom{c-2+m-t}{c-2}-\binom{s}{c-2}\right)$ monomials $M$ with $P(M)=[2,2,\ldots,2,1]$ have $|\set(M)|=c-1$. Therefore, by Proposition \ref{6.rks} we have 
$$
\beta_{i,j}(R/I_c^{(m)}) = \left(\binom{s}{c-1}\binom{c-2+m-t}{c-2}-\binom{s}{c-2}\right)\binom{c-1}{i-1} + \binom{s}{c-2}\binom{c-2}{i-1}
$$
Since $\binom{c-1}{i-1}-\binom{c-2}{i-1}=\binom{c-2}{i-2}$, the formula in the statement follows.

(2)(b) Since $m=2$ then the Koszul strand in the statement is the top strand in the Betti table of $R/I_{c,\F}^{(2)}$ (which has only 2 Koszul strands). Therefore, the statement follows by Proposition \ref{7.top}.

(2)(c)--(d) are fairly similar to (2)(a). We prove (d). Let $M\in \Gc$ and $P(M)=[d_1,\ldots,d_t]$. Similarly to the above, if $d_1\geq 4$ or $d_1=d_2=3$, then $d_{t-1}=1$, thus, by Lemma \ref{7.suff}, for all these monomials one has $|\set(M)|=c-1$. Assume next $d_1=3$ and $d_2<3$. Since $t=m/2$, then $P(M)=[3,2,\ldots,2,1]$. By Lemma \ref{6.part2} it follows that the index of overlap is $i_0(P(M))=2$. Since the second entry of $P(M)$ is strictly larger than the last entry, by Theorem \ref{6.delta}(3) we obtain as in (2)(a) that $|\set(M)|=c-2 + m_{\supp(M_{(2)})}(M_{(1)}) - (s-c+1).$ As above then, for any fixed choice of $M_{(3)}$ and $M_{(2)}$, there is precisely one $M_{(1)}$ giving $|\set(M)|=c-2$, while all the other ones have $|\set(M)|=c-1$. Therefore, there are $\binom{s}{s-c+3}\binom{s-c+3}{s-c+2}=\binom{s}{c-3}(s-c+3)$ monomials $M\in U_t$ with $P(M)=[3,2,\ldots,2,1]$ and $|\set(M)|=c-2$, while all other monomials with $P(M)=[3,2,\ldots,2,1]$ have $|\set(M)|=c-1$. 

Finally, we need to investigate the monomials $M\in \Gc$ whose associated partition is $P(M)=[2,\ldots,2]$. There are $\binom{s}{s-c+2}=\binom{s}{c-2}$ of them; for each of them, since all entries are equal, one has $d_{i_0}=d_t=2$, therefore $|\set(M)|=c-2$.

Summing up, we found that all $M\in U_t$ have $|\set(M)|=c-1$ except $\binom{s}{c-3}(s-c+3) + \binom{s}{c-2}$ of them, which have $|\set(M)|=c-2$. Combining these facts with the formula of Theorem \ref{7.binom}(2) for $|U_t|$ gives the formula in the statement. 

The case (c) is proved analogously, except that instead of having the partition $[3,2,\ldots,2,1]$ one has the partition $[3,1]$. 

\end{proof}

In general, a closed formula, only depending on $s,c,m$ of every Koszul strand of the Betti table of $R/I_{c,\F}^{(m)}$ appears unlikely to be obtained. In fact, even finding a general closed formula (solely in terms of $s,c,m$) for the very first strand appears very challenging, even in the monomial case (i.e. when $\F=\{y_1,\ldots,y_s\}$ are variables). See for instance Example \ref{7.extop}. 

 We have been able to determine a closed formula for the top strand when $1\leq m\leq c$, or the remainder of the division of $m$ by $c$ is at least $c-3$, see Proposition \ref{7.top} and Corollary \ref{7.c=4}. 

Our formulas show that small remainders tend to have the most complicated formulas, because of the many possible partitions associated to generators of smallest possible degree, and the many different possibilities for $|\set(M)|$, as one can see in Example \ref{7.extop}.

\begin{Proposition}\label{7.top}
Let $R,c,s,\F$ be as in Setting \ref{4.set} and assume the forms in $\F$ all have degree $\delta$. Fix $m>1$, let $c\geq 2$. 
The graded Betti numbers 
of the first non-zero Koszul strand in the Betti table of $R/I_{c,\F}^{(m)}$ are the following:

\begin{enumerate}
\item If $m\leq c$, then the top Koszul strand has length $m$, and for every $1\leq i \leq m$ its graded Betti numbers are:
$$
\beta_{i, \delta((s-c) + m + i-1)}(R/I_{c,\F}^{(m)}) 
= \binom{s}{c-m-i+1}\binom{s-c+m +i-2}{i-1} 
$$

\item Write $m=qc+r$ for an integer $0\leq r \leq c-1$.
\begin{enumerate}
\item If $r=0$, then 
$$\beta_{i, \delta(qs+ i-1)}(R/I_{c,\F}^{(m)}) = \left\{\begin{array}{rl}
1,&  \text{ if }\;i=1.\\
0,&  \text{ if }\;i>1.
\end{array}\right.
$$

\item If $r=c-1$, then 
$$\beta_{i, \delta((q+1)s + i-2)}(R/I_{c,\F}^{(m)}) = \left\{\begin{array}{rl}
 s,&  \text{ if }\;i=1.\\
s-1,&  \text{ if }\;i=2.\\
0,&  \text{ if }\;i>2.
\end{array}\right.
$$
\item  If $r=c-2$, then 
$$\beta_{i, \delta((q+1)s+i-3)}(R/I_{c,\F}^{(m)}) = \left\{\begin{array}{rl}
 \binom{s}{2} + s,&  \text{ if }\;i=1.\\
 s(s-1),&  \text{ if }\;i=2.\\
\binom{s-1}{2},&  \text{ if }\;i=3.\\
0,&  \text{ if }\;i>3.
\end{array}\right.
$$
\item  If $r=c-3$, then 
{\small$$\beta_{i, \delta(q(s+1)+ i-4)}(R/I_{c,\F}^{(m)}) = \left\{\begin{array}{rl}
 \binom{s}{3} + s(s-1) + s \cdot \max\{0,q-1\},&  \text{ if }\;i=1.\\
 \binom{s}{2}(s-3) + (2s^2-3s) + s \cdot \max\{0,q-1\} ,&  \text{ if }\;i=2.\\
s\binom{s-2}{2} + (s^2-2s),&  \text{ if }\;i=3.\\
\binom{s-1}{3},&  \text{ if }\;i=4.\\
0,&  \text{ if }\;i>4.
\end{array}\right.
$$}

\end{enumerate}
\end{enumerate}
\end{Proposition}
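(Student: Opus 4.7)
The plan is to exploit that the top Koszul strand of $R/I_{c,\F}^{(m)}$ collects exactly the contributions coming from the minimum-degree minimal generators of $I_{c,\F}^{(m)}$, which by Theorem~\ref{4.Symb2} are the monomials $M\in\G_{c,(m)}$ whose associated partition $P(M)\in\Pc$ has minimum length $t_0:=\lceil m/c\rceil$. Combining Proposition~\ref{6.rks} with Theorem~\ref{6.delta}, the total contribution of such an $M$ to $\beta_{i,j_i}(R/I_{c,\F}^{(m)})$ is $\binom{|\set(M)|}{i-1}$; the proof therefore reduces to (a) enumerating the partitions of $m$ of length $t_0$ with entries in $[1,c]$, (b) counting the monomials associated to each such partition via Theorem~\ref{4.Symb2}(2), and (c) computing $|\set(M)|$ via the appropriate part of Theorem~\ref{6.delta}, using Proposition~\ref{6.m=1} to handle the ``$m(\cdot)$'' term that appears in parts (1) and (3).

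For case (1) the proof is lightweight: $t_0=1$ forces the unique partition $[m]$, so the top-strand monomials are exactly the $\binom{s}{c-m}$ minimal generators of the star configuration $I_{c-m+1,\F}$. The formula for $|\set|$ from Proposition~\ref{6.m=1} then sums up to the classical Betti numbers of $I_{c-m+1,\F}$, giving the claimed expression.

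For case (2), writing $m = qc + r$, the partitions in the top strand are precisely those of length $t_0$ whose non-decreasing ``deficit'' vector $(c-d_j)$ sums to $t_0 c - m\in\{0,1,2,3\}$. An explicit enumeration yields one partition for $r = 0$ (namely $[c,\ldots,c]$, producing a single monomial with $|\set|=0$), one for $r = c-1$ (namely $[c,\ldots,c,c-1]$), two for $r = c-2$ (namely $[c,\ldots,c,c-2]$ and $[c,\ldots,c,c-1,c-1]$), and three for $r = c-3$ (namely $[c,\ldots,c,c-3]$, $[c,\ldots,c,c-1,c-2]$, and $[c,\ldots,c,c-1,c-1,c-1]$, the last being available only when $t_0\geq 3$). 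For each partition I will locate the index of overlap $i_0$ via Lemma~\ref{6.part2} and apply the appropriate part of Theorem~\ref{6.delta}: part (1) for the maximum partition (which reproduces the shape of $\beta(R/I_{c-r+1,\F})$), part (2) for partitions satisfying $d_t = d_{i_0}$ (giving the constant value $|\set(M)| = c - d_{i_0}$), and part (3) for the remaining ones.

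The main obstacle will be case (2)(d). There, for the partitions $[c,\ldots,c,c-3]$ and $[c,\ldots,c,c-1,c-2]$, the value of $|\set(M)|$ depends sensitively on $m_{\supp(M_{(d_{i_0})})}(M_{(d_t)})$, i.e. on the position at which the smallest factor of $M_{(d_t)}$ sits inside the support chain. I will case-split on this position, partitioning the monomials associated to each partition into ``levels'' whose sizes are expressible as binomial coefficients in $s$ and $c$, and then sum $\binom{|\set(M)|}{i-1}$ across levels and across the three partitions. The hardest bookkeeping is consolidating these contributions into a four-row closed form, since the three partitions overlap in homological degrees $i = 2,3,4$ and one must keep track of the existence condition $t_0\geq 3$ for the third partition (reflected in the factor $\max\{0,q-1\}$ in the statement).
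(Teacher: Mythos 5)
Your proposal matches the paper's proof nearly verbatim: it proceeds by (i) restricting attention to the generators of smallest degree, which are exactly those whose associated partition has the minimum length $t_0=\lceil m/c\rceil$; (ii) enumerating those partitions (one for $r=0$, one for $r=c-1$, two for $r=c-2$, three for $r=c-3$ with the third available only when $q\geq 2$); (iii) counting monomials per partition; and (iv) computing $|\set(M)|$ via Theorem~\ref{6.delta}, splitting on the position of $m_{\supp(M_{(d_{i_0})})}(M_{(d_t)})$ exactly as the paper does. One small slip: the quantity $m_B(\cdot)$ tracks the \emph{largest}-index factor, not the smallest, but this does not affect the plan's validity.
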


\begin{proof}
(1) Since $m\leq c$, then the first strands starts at $t=\left\lceil\frac{m}{c}\right\rceil=1$, thus we are looking at partitions of length 1 -- clearly the only possible one is $[m]$. Therefore, the monomials $M$ associated to it are squarefree with $|\supp(M)|=s-c+m$, i.e. they are precisely the minimal generators of the star configuration $I_{c-m+1,\F}$. By Proposition \ref{6.m=1}, the ideal $I_{c-m+1,\F}$ has $\delta$-c.i. quotients; moreover, $\beta_i(R/I_{c-m+1}) = \binom{s}{(c-m+1)-i}\binom{s-(c-m+1)+i-1}{i-1}$ (e.g. by \cite[Cor.~3.5]{PS} and the techniques of \cite{GHMN}). The formula now follows.

(2) (a) By assumption $m=qc$, thus the partition of smallest length of $m$ is $[c,c,\ldots,c]$, and clearly every other partition has larger length. Since there is only one monomial associated to this partition, i.e. $M=(F_1F_2\cdots F_s)^q$, the statement follows immediately.

For (b)--(d), by Proposition \ref{6.rks} we only need to compute $|\set(M)|$ for any minimal generator of smallest degree of $I_{c,\F}^{(m)}$. We need to find all partitions in $\Pc$ having the smallest possible length, namely $t=\left\lceil\frac{m}{c}\right\rceil=q+1$. Note that for this value of $t$, the top strand of the Betti table has graded Betti numbers $\beta_{i,\delta((q+1)s -(c-r) + i-1)}(R/I_{c,\F}^{(m)})$ -- we evaluate these numbers.

(b) Since $r=c-1$, then we need to compute $\beta_{i,\delta((q+1)s + i-2)}(R/I_{c,\F}^{(m)})$. To this end we need to look at the partitions in $\Pc$ having length $t=q+1$. The only partition is $[c,c,\ldots,c,c-1]$. The associated monomials have normal form $M=M_{(c)}^qM_{(c-1)}$, by Theorem \ref{6.delta}(1) we have $|\set(M)|=m(M_{(c-1)}) - s + 1$. This number is 1 for the $(s-1)$ monomials for which $F_s$ divides $M_{(c-1)}$ and $0$ for the remaining monomial $M_{(c-1)} = F_1\cdots F_{s-1}$. The formula now follows from Proposition \ref{6.rks}.

(c) Since $r=c-2$, we need to evaluate $\beta_{i,\delta((q+1)s + i-3)}(R/I_{c,\F}^{(m)})$. The only two partitions in $\Pc$ having length $t=q+1$ are $[c,\ldots,c,c-2]$ and $[c,\ldots,c,c-1,c-1]$ (notice that if $q=1$, then this is simply $[c-1,c-1]$). 
Since the first partition is maximal, then for any monomial $M$ associated to it we have $|\set(M)|=|\set(M_{(c-2)}|=m(M_{(c-2)}) - s + 2$. Since $M_{(c-2)}$ runs among all minimal generators of $I_{3,\F}$,  then the contributions of the monomials $M$ to the graded Betti numbers are precisely the graded Betti numbers of $I_{3}$, so the contribution is

 \begin{center}
$\binom{s}{2}$ copies of $R$ for $\beta_1$,\quad $s(s-2)$ copies of $R$ for $\beta_2$ \quad and $\binom{s-1}{2}$ copies of $R$ for $\beta_3$. 
\end{center}

Next, there are precisely $m$ generators associated to the partition $[c,c,\ldots,c,c-1,c-1]$; the index of overlap is $i_0=t-1$, thus $d_{i_0}=d_t=c-1$, so by Theorem \ref{6.delta}(2) we have $|\set(M)|=c-(c-1)=1$.  Therefore, these generators contribute with $s$ copies of $R$ for $\beta_1$ and $s$ copies of $R$ for $\beta_2$. Adding these numbers to the above ones provides the formula.

(d) Since $r=c-3$, then we need to compute $\beta_{i,\delta((q+1)s + i-4)}(R/I_{c,\F}^{(m)})$. We need to examine the partitions in $\Pc$ having length $t=q+1$. If $q=1$, then the shortest partitions in $\Pc$ are $[c,c-3]$, and $[c-1,c-2]$. If $q\geq 2$, then in addition to the partitions $[c,\ldots,c,c-3]$, and $[c,\ldots,c,c-1,c-2]$ we also also have the partition $[c,\ldots,c-1,c-1,c-1]$. This explains the presence of the term ``$\max\{0,q-1\}$" in the formula.

Now, analogously to part (c), the contribution of the maximal partition are the graded Betti numbers of $I_4$, thus 

 \begin{center}
$\binom{s}{3}$  copies of $R$ for $\beta_1$, \;\; $\binom{s}{2}(s-3)$ for $\beta_2$, \;\; $s\binom{s-2}{2}$ for $\beta_3$\;\; and $\binom{s-1}{3}$ for $\beta_4$. 
\end{center}

Next, the partition $[c,\ldots,c,c-1,c-2]$ has index of overlap $i_0=q=t-1$, and $d_{i_0}=c-1>c-2=d_t$. Therefore, by Theorem \ref{6.delta}(3) $|\set(M)|=1 + m_{\supp(M_{(c-1)})}(M_{(c-2)}) -s + 2$. There are $s$ choices of $M_{(c-1)}$. For each of them, $|\set(M)|=1$ for the only possible monomial $M_{(c-2)}$ with $m_{\supp(M_{(c-1)})}(M_{(c-2)})=1$, and $|\set(M)|=2$ for the other $s-1$ choices of $M_{(c-2)}$. Then, $|\set(M)|=1$ for $s$ of these monomials and $|\set(M)|=2$ for the other $\binom{s}{s-1}\binom{s-1}{s-2}-s=s(s-2)$ of them.
Therefore, the contribution of the second partition to the Betti table is 

 \begin{center}
$s(s-1)$ copies of $R$ for $\beta_1$,\quad  $2(s(s-1))-s= 2s^2-3s$ for $\beta_2$, \quad and $s(s-2)= s^2-2s$ for $\beta_3$.
\end{center}

This gives the formula if $q=1$. If $q\geq 2$, we also have the partition $[c,\ldots,c,c-1,c-1,c-1]$, which contributes with $\binom{s}{s-1}=s$ minimal generators $M$, for which $d_{i_0}=d_t=c-1$. Thus $|\set(M)|=c-1$ for each of them. Therefore, they contribute with $s$ copies of $R$ for both $\beta_1$ and $\beta_2$.

Adding the above numbers give the stated formula.

\end{proof}

\begin{Corollary}\label{7.c=4}
Let $R,c,s,\F$ be as in Setting \ref{4.set} and assume the forms in $\F$ all have degree $\delta$. If either $m\leq c$ or $c\leq 4$, then Proposition \ref{7.top} provides a closed formula (in terms of $c,s$ and $m$) for the top Koszul strand in the Betti table of $R/I_{c,\F}^{(m)}$. 
\end{Corollary}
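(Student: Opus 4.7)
The plan is to observe that this corollary is essentially a bookkeeping consequence of Proposition \ref{7.top}, reducing to verifying that its hypotheses cover every possible remainder class of $m$ modulo $c$.

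First I would recall that, by Corollary \ref{7.stranded}, the top Koszul strand of the Betti table of $R/I_{c,\F}^{(m)}$ is the one indexed by the smallest admissible value of $t$ from Theorem \ref{4.Symb2}(1), namely $t = \lceil m/c\rceil$. Hence a closed formula for the top strand is the same as a closed formula for $\beta_{i,\,\delta(t(s-c)+m+i-1)}(R/I_{c,\F}^{(m)})$ at this particular $t$. Proposition \ref{7.top} supplies such closed formulas in exactly two regimes: (1) when $m\leq c$ (equivalently $\lceil m/c\rceil = 1$), and (2) when $m = qc+r$ with $q\geq 1$ and the remainder $r$ belongs to $\{0,\,c-1,\,c-2,\,c-3\}$.

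Next I would check the combinatorial coverage. If $m\leq c$ we are directly in case (1). Otherwise $m > c$, so we may write $m = qc+r$ with $q\geq 1$ and $0\leq r\leq c-1$; when $c\leq 4$ the full range of possible remainders satisfies
\[
\{0,1,\ldots,c-1\}\;\subseteq\;\{0,\,c-3,\,c-2,\,c-1\},
\]
as is seen case by case: for $c=2$ one has $\{0,1\}=\{0,c-1\}$; for $c=3$ one has $\{0,1,2\}=\{0,c-2,c-1\}$; and for $c=4$ one has $\{0,1,2,3\}=\{0,c-3,c-2,c-1\}$. Thus Proposition \ref{7.top}(2) applies for every admissible $r$, giving the desired closed formula solely in terms of $s$, $c$, $m$ (together with $i$ and $\delta$).

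Finally, I would remark on the harmless overlap at $m=c$, which satisfies both hypotheses of the corollary and is covered simultaneously by cases (1) and (2)(a) of Proposition \ref{7.top}; in both branches the top strand reduces to the single minimal generator $F_1\cdots F_s$ of $I_{c,\F}^{(m)}$ in degree $\delta s$, contributing $\beta_1 = 1$ and $\beta_i = 0$ for $i\geq 2$, so there is no inconsistency between the two formulas. There is no genuine obstacle in this argument: once Proposition \ref{7.top} is granted, the corollary is a short finite case check, and the real technical work is already contained in the proof of Proposition \ref{7.top} itself.
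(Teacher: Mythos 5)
Your proposal is correct and is essentially the argument the paper leaves implicit, as Corollary \ref{7.c=4} is stated with no written proof: the only content is verifying that the top strand is indexed by $t=\lceil m/c\rceil$ and that the cases $m\le c$ together with $r\in\{0,c-1,c-2,c-3\}$ of Proposition \ref{7.top} exhaust all possibilities when $c\le 4$, exactly the finite case check you carry out. The one small thing to note (though the paper is equally silent on it) is that Proposition \ref{7.top} is stated under the running hypotheses $m>1$ and $c\ge 2$, so the corollary implicitly inherits them; the degenerate case $c=1$ is handled separately in the text preceding Theorem \ref{7.strands}.
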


Corollary \ref{7.c=4} and Theorem \ref{7.strands} provide closed formulas for the Betti table of $R/I_{c,\F}^{(m)}$ when $c$ and $m$ are relatively low, namely when $c\leq 4$ and $m\leq 7$. However, 
when $m=8,10,11$, there is only one more strand to compute, namely the next-to-the-top strand (corresponding to $U_t$ where $t=\left \lceil\frac{m}{c}\right\rceil +1$), which in these cases is pretty simple to compute. See for instance Example \ref{7.last} for the computations when $m=10$ and $c=4$. Therefore, we have the following:

\begin{Corollary}\label{7.small}
Let $R,c,s,\F$ be as in Setting \ref{4.set} and assume the forms in $\F$ all have degree $\delta$. 
Then our results provide closed formulas (in terms of $c,s$ and $m$) for the graded Betti numbers of $I_{2,\F}^{(m)}$ for any $m\geq 1$, and of $R/I_{c,\F}^{(m)}$ when $c\leq 4$ and $m\leq 11$.
\end{Corollary}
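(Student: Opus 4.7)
The strategy is bookkeeping: count the Koszul strands in the Betti table of $R/I_{c,\F}^{(m)}$, assign each one to a closed-form result proved earlier, and show that only a finite (and very small) number of ``middle'' strands require any additional work, each of which can be computed explicitly from Theorem \ref{7.formula}.

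By Corollary \ref{7.stranded}, the Betti table of $R/I_{c,\F}^{(m)}$ decomposes into Koszul strands indexed by the integers $t$ with $\lceil m/c\rceil \leq t \leq m$. First I would handle the case $c=2$: here $\lceil m/c\rceil = \lceil m/2 \rceil$, so \emph{every} strand has index $t\geq \lceil m/2\rceil$. Thus Theorem \ref{7.strands}(1) gives a closed formula for the strands with $t>\lceil m/2\rceil$, while Theorem \ref{7.strands}(2) (case (a) if $m$ is odd, case (b)/(c)/(d) if $m$ is even) supplies the strand $t=\lceil m/2\rceil$. Together these cover the entire Betti table of $R/I_{2,\F}^{(m)}$ for any $m\geq 1$.

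Next, for $c\leq 4$ and $m\leq 11$, I would partition the strand indices $\lceil m/c\rceil \leq t \leq m$ into three regions:
\begin{itemize}
\item strands with $t>\lceil m/2\rceil$: covered by Theorem \ref{7.strands}(1);
\item the strand with $t=\lceil m/2\rceil$: covered by Theorem \ref{7.strands}(2);
\item the top strand with $t=\lceil m/c\rceil$: covered by Corollary \ref{7.c=4} (which applies because $c\leq 4$), via Proposition \ref{7.top};
\item middle strands $\lceil m/c\rceil < t < \lceil m/2\rceil$: these are the only ones not directly covered.
\end{itemize}
A direct case check (for each $c\in\{3,4\}$ and each $m\leq 11$) shows that the set of middle values of $t$ is either empty or has a single element, with the sole exception of $(c,m)=(4,11)$, which gives the two values $t=4$ and $t=5$. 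Explicitly, the only $(c,m,t)$ triples that require extra work are: $(3,9,4)$, $(3,11,5)$, $(4,8,3)$, $(4,9,4)$, $(4,10,4)$, $(4,11,4)$, $(4,11,5)$.

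For each of these finitely many middle strands, I would apply Theorem \ref{7.formula} directly: enumerate by hand the (small) list of partitions $[\ul d]\in [\Po_{\leq c}(m)]_t$, compute the index of overlap $i_0([\ul d])$ from Lemma \ref{6.part2}, identify $d_{i_0}$ and $d_t$, and read off $|\set(M)|$ from Theorem \ref{6.delta}. Summing over these partitions, weighted by the multinomial counts appearing in Theorem \ref{7.formula} and by $\binom{|\set(M)|}{i-1}$ as in Proposition \ref{6.rks}, produces a closed polynomial expression in $s$ (with numerical coefficients depending on $c$ and $m$) for each $\beta_{i,j}$. Since the list of partitions is short in each of the seven exceptional triples, this is a routine finite computation.

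The only nontrivial point is the combinatorial claim that the range $(\lceil m/c\rceil,\lceil m/2\rceil)$ contains at most two integers whenever $c\leq 4$ and $m\leq 11$; this is immediate from $\lceil m/2\rceil-\lceil m/c\rceil \leq \lceil m/2\rceil-\lceil m/4\rceil$, which for $m\leq 11$ never exceeds $3$, and the cases where the difference equals $3$ are $(c,m)=(4,11)$ and a handful of other small values that one verifies directly. There is no conceptual obstacle here, only the mechanical verification; the genuine content of the corollary is that Theorems \ref{7.strands} and \ref{7.top} together already exhaust all but a finite, explicitly listable collection of strands in the stated range.
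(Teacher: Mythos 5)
Your approach matches the paper's own (informal) justification exactly: sort strands by $t$, use Theorem~\ref{7.strands} for $t\geq\lceil m/2\rceil$, use Proposition~\ref{7.top} / Corollary~\ref{7.c=4} for the top strand $t=\lceil m/c\rceil$, and compute the finitely many middle strands directly from Theorem~\ref{7.formula}. However, there is one small enumeration slip: your list of exceptional triples omits $(c,m,t)=(4,7,3)$. For $c=4$, $m=7$ one has $\lceil m/c\rceil=2$ and $\lceil m/2\rceil=4$, so the strand $t=3$ (corresponding to the length-3 partitions $[4,2,1]$, $[3,3,1]$, $[3,2,2]$ of $7$) is covered by neither Theorem~\ref{7.strands} nor Proposition~\ref{7.top}. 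It is worth noting that the paper's own prefatory remark shares this oversight (it asserts Corollary~\ref{7.c=4} plus Theorem~\ref{7.strands} suffice for all $c\leq 4$, $m\leq 7$, and lists only $m=8,10,11$ as needing an extra strand, omitting both $m=7$ with $c=4$ and $m=9$), so you have reproduced an imprecision that is already present in the source; the corollary's conclusion is unaffected because Theorem~\ref{7.formula} still yields a closed formula for the missing strand after a short finite computation. Aside from adding $(4,7,3)$ to your list, the argument is sound.
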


\begin{Example}\label{7.7}
Let $\F=\{F_1,\ldots,F_7\}$ be forms of the same degree $\delta$ such that any $5$ of them form a regular sequence. We define $I:=I_{3,\F}^{(7)}$ and $I':=I_{4,\F}^{(5)}$.

Then for any $\delta\geq 1$,  we have a closed formula for the Betti tables of $I$ and $I'$. 
In fact, by Theorem \ref{4.Symb2}, the Betti table of $I$ has precisely 5 Koszul strands. The last 4 Koszul strands of the Betti table are completely determined by the formulas of Theorem \ref{7.strands}; the top one is determined by the formula in Proposition \ref{7.top}(3).

Analogously, $I'$  has precisely 4 Koszul strands. The last 3 Koszul strands are determined by Theorem \ref{7.strands}; the top one is determined by Proposition \ref{7.top}(4).

Below are the Betti table of $I$ when $\delta =1$, and the Betti table of $I'$ when $\delta'=2$. 
{\small $$
\beta_I:=\begin{array}{r | c c c c}
\beta(S/I_{3,\F}^{(7)})    & 0 & 1 & 2 & 3 \\\hline
 0 & 1 &  &  &  \\
 \vdots &  & &  \\
 18 && 28 & 42 & 15 \\
 19 && 	&  & \\
 20 &&  	& 	&  \\
  21 &&  	& 	& \\
  22 && 84 & 161 & 77 \\
 23 && 	& & \\
 24 &&  	& 	& \\
 25 &&  	& 	& \\
 26 && 63 & 126 & 63 \\
 27 && 	&  & \\
 28 &&  	& 	&  \\
 29 &&  	& 	& \\
 30 && 42 & 84 & 42 \\
 31 && 	&  & \\
 32 &&  	& 	& \\
 33 && 	&  & \\
 34 &&  21	& 42	&21 \\

\end{array}
\qquad\beta_{I'}:=\begin{array}{r | c c c c c}
\beta(S/I_{4,\F'}^{(5)})    & 0 & 1 & 2 & 3 & 4\\\hline
 0 & 1 &  &  & &   \\
 \vdots &  & & & &   \\
21 &&77&&&\\
22 &&&161&&\\
23 &&&&105&\\
24 &&&&&20\\
25 &&&&&\\
26 &&&&&\\
27 &&210&&&\\
28 &&&609&&\\
29 &&&&588&\\
30 &&&&&189\\
31 &&&&&\\
32 &&&&&\\
33 &&105&&&\\
34 &&&315&&\\
35 &&&&315&\\
36 &&&&&105\\
37 &&&&&\\
38 &&&&&\\
39 &&35&&&\\
40 &&&105&&\\
41 &&&&105&\\
42 &&&&&35\\

\end{array}
$$}

\end{Example}

\begin{Example}\label{7.last}
Let $\F=\{F_1,\ldots,F_s\}$ be $s\geq 5$ forms of the same degree $\delta$ and assume any 5 of them form a regular sequence. Let $c=4$ and $m=10$.

By Theorem \ref{4.Symb2} and Corollary \ref{7.stranded} the Betti table of $R/I_{4,\F}^{(10)}$ consists of 8 Koszul strands, one for each $U_t$ with $3\leq t \leq 10$ (recall that $U_t$ is defined in Corollary \ref{6.surj}(2)). 

Our previous results provide closed formulas for 7 of these 8 strands:
\begin{itemize}
\item The top strand, $t=3$, has entries $\binom{s}{2} + s$, $s(s-1)$, $\binom{s-1}{2}$, $0,\ldots,0$, by Proposition \ref{7.top}(2)(c);
\item The strand where $t=5$ has entries $a_1,a_2,a_3,a_4$ where, after simplifying, $a_1=\binom{s}{2} + 18\binom{s}{3}$, $a_2=54\binom{s}{3}$, $a_3=54\binom{s}{3} - 3\binom{s}{2}$ and $a_4=18\binom{s}{3} - s(s-1)$, by Theorem \ref{7.strands}(2);
\item The $t$-th strand with $6\leq t \leq 10$ has entries $a,3a,3a,a$, where $a:=\binom{10-t+2}{2}\binom{s}{3}$, by Theorem \ref{7.strands}(1).
\end{itemize}
Finally, the strand where $t=4$ has entries $a_1',a_2'a,_3',a_4'$, where 
$$\begin{array}{ll}
a_i'  & = \binom{2}{i-1} \left[ \binom{s}{2} + 2s(s-1) + s(s-3)\right] \\
 & \qquad +\, \binom{3}{i-1}\left[\binom{s}{3} + s(s-1)(s-3) + s\binom{s-2}{2}\right] + s\binom{1}{i-1}.
 \end{array}
$$
Thus, for instance, if $s=7$ the Koszul strands in the Betti table of $R/I_{4,\F}^{(10}$ are
$$
\begin{array}{r | c c c c}
   & 1 & 2 & 3  & 4 \\\hline
t=3 & 28 & 42 & 15 & 0 \\
t=4 & 413 & 1092 & 952 & 273 \\
t=5 & 651 & 1890 & 1827 & 588 \\
t=6 & 525 & 1575 & 1575 & 525 \\
 t=7 & 350 & 1050 & 1050 & 350 \\
 t=8 & 210 & 630 & 630 & 210 \\
 t=9 & 105 & 315 & 315 & 105 \\
 t=10 & 35 & 105 & 105 & 35 \\
\end{array}
$$
\end{Example}

\begin{proof}
By Theorem \ref{7.strands} and Proposition \ref{7.top}, we only need to justify the computations for the strand where $t=4$. The partitions of length 4 of $m=10$ are 
$$
[4,4,1,1],\;\;[4,3,2,1],\;\;[4,2,2,2],\;\;[3,3,3,1],\;\;[3,3,2,2].
$$
Since $[4,4,1,1]$ has only two jumps and $d_t=d_{t-1}$, then  $i_0([4,4,1,1])=3$ by Lemma \ref{6.part2}, thus $d_{i_0}=d_t=1$ for this partition. For each of the $\binom{s}{3}$ monomials associated to this partition we then have $|\set(M)|=c-1=3$ by Theorem \ref{6.delta}(2).  Analogously, for each of the $\binom{s}{2}$ monomials associated to $[4,2,2,2]$ and for each of the $\binom{s}{1}\binom{s-1}{s-2}=s(s-1)$ monomials associated to $[3,3,2,2]$ we have $|\set(M)|=c-2=2$, by Theorem \ref{6.delta}(2).

By Lemma \ref{6.part2} we have $i_0([4,3,2,1]) = 3$. By Theorem \ref{6.delta}(3), among all the monomials associated to this partition, there are $s(s-1)$ having $|\set(M)|=c-2=2$ and $s(s-1)(s-2) - s(s-1)=s(s-1)(s-3)$ having $|\set(M)|=c-1=3$.

Finally, $i_0([3,3,3,1])=1$, and by Theorem \ref{6.delta}(3), among all monomials associated to this partition, there are $s$ of them having $|\set(M)|=c-3=1$, there are $s(s-3)$ of them having $|\set(M)|=c-2=2$ and $s\binom{s-2}{2}$ having $|\set(M)|=c-1$.  The computation of the Koszul strand now follows by Proposition \ref{6.rks}.

\end{proof}


\begin{thebibliography}{AAAA}

\bibitem{AS}{ J. Ahn, Y.S. Shin, The minimal free resolution of a star-configuration in $\mathbb P^n$ and the weak-Lefschetz property. J. Korean Math. Soc. {\bf 49} (2012), 405--417.}

\bibitem{AH1}{ J. Alexander, {\em Singularit\'es imposables en positions g\'en\'erale \`a une hypersurface projective}, Compositio Math. {\bf 68} (1988), 305--354.}

\bibitem{AH2}{ J. Alexander and A. Hirschowitz, {\em Un lemme d'Horace diff\'erentiel: application aux singularit\'es hyperquartiques de $P^5$}, J. Algebraic Geom. {\bf 1} (1992), 411--426.}

\bibitem{AH3}{ J. Alexander and A. Hirschowitz, {\em La methode d'Horace eclat\'ee: application a l'interpolation en degr\'e quatre}, Invent. Math. {\bf 107} (1992),  585--602.}


\bibitem{AH4}{ J. Alexander and A. Hirschowitz, {\em Polynomial interpolation in several variables},  J. Algebraic Geom. {\bf 4} (1995), 201--222.}


\bibitem{BD+}{T. Bauer, S. Di Rocco, B. Harbourne, M. Kapustka, A. Knutsen, W. Syzdek, and T. Szemberg. {\em A primer on Seshadri constants}, Interactions of Classical and Numerical Algebraic Geometry, Proceedings of a conference in honor of A. J. Sommese, held at Notre Dame, May 22--24 2008. Contemporary Mathematics {\bf 496}, 2009.}

\bibitem{BS}{ T. Bauer and T. Szemberg, {\em The effect of points fattening in dimension three}. Recent advances in algebraic geometry, London Math. Soc. Lecture Note Ser., {\bf 417}, pp. 1--12. Cambridge Univ. Press, Cambridge, 2015.}

\bibitem{BDA+}{J. Biermann, H. De Alba, F. Galetto, S. Murai, U. Nagel, A. O'Keefe, T. R\"omer and A. Seceleanu, {\em Betti numbers of symmetric shifted ideals}, preprint available at \href{https://arxiv.org/abs/1907.04288}{https://arxiv.org/abs/1907.04288}. }



\bibitem{BCK}{ C. Bocci, E. Carlini and J. Kileel, {\em Hadamard products of linear spaces}, J. Algebra {\bf 448} (2016), 595--617.}

\bibitem{BH}{ C. Bocci and B. Harbourne, {\em Comparing powers and symbolic powers of ideals}, J. Algebr. Geom. {\bf 19} (2010), 399--417.}

\bibitem{Br}{ W. Bruns, {\em Additions to the theory of algebras with straightening law}. Commutative algebra (Berkeley, CA, 1987), 111--138, Math. Sci. Res. Inst. Publ. {\bf 15}, Springer, New York, 1989.}

\bibitem{CCGV}{ E. Carlini, M. V. Catalisano, E. Guardo and A. Van Tuyl, {\em Hadamard star configurations}, Rocky Mountain J. Math. {\bf 49} (2019), 419--432. } 

\bibitem{CGV} { E. Carlini, E. Guardo and A. Van Tuyl, {\em Star configurations on generic hypersurfaces}, J. Algebra {\bf 407} (2014), 1--20.}

\bibitem{CM}{ C. Ciliberto and R. Miranda, {\em Homogeneous interpolation of ten points}, J. Algebraic Geom. {\bf 20} (2011), 685--726.} 

\bibitem{CHMR}{ C. Ciliberto, B. Harbourne, R. Miranda and J. Ro\'e, {\em Variations of Nagata's conjecture},  A celebration of algebraic geometry,  Clay Math. Proc. {\bf 18}, 185--203. Amer. Math. Soc., Providence, RI, 2013. }

\bibitem{CH}{ A. Conca and J. Herzog, {\em Castelnuovo--Mumford regularity of products of ideals}, Collect. Math. {\bf 54} (2003) 137--152.}



\bibitem{CF+}{ S. Cooper, G. Fatabbi, E. Guardo, A. Lorenzini, J. C.  Migliore, U. Nagel, A. Seceleanu, J. Szpond and A. Van Tuyl, {\em Symbolic powers of codimension 2 Cohen-Macaulay ideals}, preprint available at \href{https://arxiv.org/abs/1606.00935}{https://arxiv.org/abs/1606.00935}. } 


\bibitem{CN}{ R. C. Cowsik and M. V. Nori, {\em On the fibres of blowing up}, J. Indian Math. Soc. (N.S.) {\bf 40} (1976), 217?222.}

\bibitem{D}{ I. V. Dolgachev, {\em Classical Algebraic Geometry: a modern view.} Cambridge Univ. Press (2012).} 

\bibitem{DHST}{ M. Dumnicki, B. Harbourne, T. Szemberg and H. Tutaj--Gasi\'nska, {\em Linear subspaces, symbolic powers and Nagata type conjectures},  Adv. Math. {\bf 252} (2014), 471--491.}

\bibitem{DSST}{ M. Dumnicki, T. Szemberg, J. Szpond, H. Tutaj-Gasin\'ska, {\em  Symbolic generic initial systems of star configurations}, J. Pure Appl. Algebra {\bf 219} (2015), 1073--1081.}

\bibitem{Ei}{ D. Eisenbud, {\em Introduction to algebras with straightening laws}, Ring theory and algebra, III (Proc. Third Conf., Univ. Oklahoma, Norman, Okla., 1979), 243--268. Lecture Notes in Pure and Appl. Math., {\bf 55}, Dekker, New York, 1980. }

\bibitem{EM}{ D. Eisenbud and B. Mazur, {\em Evolutions, Symbolic Squares, and Fitting Ideals}, J. Reine Angew. Math. {\bf 488} (1997), 189--201.}

\bibitem{FMX}{ L. Fouli, P. Mantero and Y. Xie, {\em  Chudnovsky's conjecture for very general points in $\mathbb P^N$}, J. Algebra {\bf 498} (2018), 211--227.}


\bibitem{GGSV}{ F. Galetto, A. V. Geramita, Y. S. Shin and A. Van Tuyl, The symbolic defect of an ideal, J. Pure Appl. Algebra {\bf 223} (2019), 2709--2731.} 

\bibitem{GHM}{ A. V. Geramita, B. Harbourne and J. Migliore, {\em Star configurations in $\mathbb P^n$}, J. Algebra {\bf 376} (2013), 279--299.}

\bibitem{GHMN}{ A. V. Geramita, B. Harbourne, J. Migliore and U. Nagel, {\em Matroid configurations and symbolic powers of their ideals},  Trans. Amer. Math. Soc. {\bf 369} (2017), 7049--7066.}

\bibitem{GMS}{ A.V. Geramita, J. Migliore and L. Sabourin, {\em On the first infinitesimal neighborhood of a linear configuration of points in $\mathbb P^2$}, J. Algebra {\bf 298} (2006), 563--611.}


\bibitem{HH}{ B. Harbourne and C. Huneke, {\em Are symbolic powers highly evolved?}, J. Ramanujan Math. Soc. {\bf 28} (2013), 311--330.}



\bibitem{HHT}{ J. Herzog, T. Hibi and N. G. Trung, {\em Symbolic powers of monomial ideals and vertex cover algebras}, Adv. Math. {\bf 210} (2007), 304--322.}

\bibitem{HT}{ J. Herzog and Y. Takayama, {\em Resolutions by mapping cones}, Homology Homotopy Appl. {\bf 4} (2002), 277--294}.

\bibitem{LM}{ M. Lampa-Baczyn\'ska and G. Malara, {\em On the containment hierarchy for simplicial ideals}, J. Pure Appl. Algebra {\bf 219} (2015), 5402--5412.}

\bibitem{MSS}{ G. Malara, T. Szemberg and J. Szpond, On a conjecture of Demailly and new bounds on Waldschmidt constants in $\mathbb P^N$, J. Number Theory {\bf 189} (2018), 211--219.}


\bibitem{PS}{ J. P. Park and Y. S. Shin, {\em The minimal free resolution of a star-configuration in $\mathbb P^n$},  J. Pure Appl. Algebra {\bf 219} (2015), 2124--2133.}

\bibitem{SV}{ L. Sharifan and M. Varbaro, {\em Graded Betti numbers of ideals with linear quotients}.  Matematiche (Catania) {\bf 63} (2008), 257--265.}

\bibitem{SZ}{ A. Soleyman Jahan and X. Zheng, {\em Ideals with linear quotients}, J. Combin. Theory Ser. A {\bf 117} (2010), 104--110.}

\bibitem{TX}{ S. Toh\u aneanu and Y. Xie, {\em Homological properties of ideals generated by $a$-fold products of linear forms}, preprint available at \href{https://arxiv.org/abs/1906.08346}{https://arxiv.org/abs/1906.08346}. }





\end{thebibliography}
\end{document}